\tikzstyle{main node}=[circle, draw,
\tikzstyle{code node}=[circle, draw, fill=lightgray,
\tikzstyle{shift node}=[circle, draw, fill=red,
\tikzstyle{square main node}=[shape=rectangle, draw,
\tikzstyle{square code node}=[shape=rectangle, draw, fill=lightgray,
\tikzstyle{small node}=[circle, draw,
\newtheorem{theorem}{Theorem}
\newtheorem{proposition}[theorem]{Proposition}
\newtheorem{lemma}[theorem]{Lemma}
\newtheorem{corollary}[theorem]{Corollary}
\newtheorem{definition}[theorem]{Definition}
\newtheorem{remark}{Remark}
\newcommand{\LD}{\gamma^{\text{\tiny{L}}}}
\newcommand{\LDt}{\gamma_t^{\text{\tiny{L}}}}
\newcommand{\ID}{\gamma^{\text{\tiny{ID}}}}
\newcommand{\OLD}{\gamma^{\text{\tiny{OL}}}}
\newcommand{\IDt}{\gamma_t^{\text{\tiny{ID}}}}
\newcommand{\IDe}{\gamma_E^{\text{\tiny{ID}}}}
\newcommand{\IDs}{\gamma_S^{\text{\tiny{ID}}}}
\begin{document}

\title{Bounds and extremal graphs for\\total dominating identifying codes}
\author{Florent Foucaud\footnote{\noindent Université Clermont Auvergne, CNRS, Clermont Auvergne INP, Mines Saint-Etienne, LIMOS, 63000 Clermont-Ferrand, France.}~\footnote{This author was financed by the French government IDEX-ISITE initiative 16-IDEX-0001 (CAP 20-25) and by the ANR project GRALMECO (ANR-21-CE48-0004).}
\and Tuomo Lehtil\"a\footnote{Department of mathematics and statistics, University of Turku, Finland.  Research supported by the Academy of Finland grant 338797.}~\footnote{\noindent Univ Lyon, Universit\'e Claude Bernard, CNRS, LIRIS - UMR 5205, F69622 (France). Research supported by the Finnish Cultural Foundation.}
}

\maketitle

\begin{abstract}
An identifying code $C$ of a graph $G$ is a dominating set of $G$ such that any two distinct vertices of $G$ have distinct closed neighbourhoods within $C$. The smallest size of an identifying code of $G$ is denoted $\gamma^{\text{\tiny{ID}}}(G)$. 
When every vertex of $G$ also has a neighbour in $C$, it is said to be a total dominating identifying code of $G$, and the smallest size of a total dominating identifying code of $G$ is denoted by $\gamma_t^{\text{\tiny{ID}}}(G)$. 

Extending similar characterizations for identifying codes from the literature, we characterize those graphs $G$ of order $n$ with $\gamma_t^{\text{\tiny{ID}}}(G)=n$ (the only such connected graph is $P_3$) and $\gamma_t^{\text{\tiny{ID}}}(G)=n-1$ (such graphs either satisfy $\gamma^{\text{\tiny{ID}}}(G)=n-1$ or are built from certain such graphs by adding a set of universal vertices, to each of which a private leaf is attached).

Then, using bounds from the literature, we remark that any (open and closed) twin-free tree of order $n$ has a total dominating identifying code of size at most $\frac{3n}{4}$. This bound is tight, and we characterize the trees reaching it. Moreover, by a new proof, we show that this upper bound actually holds for the larger class of all twin-free graphs of girth at least~5. The cycle $C_8$ also attains the upper bound. We also provide a generalized bound for all graphs of girth at least~5 (possibly with twins).

Finally, we relate $\gamma_t^{\text{\tiny{ID}}}(G)$ to the similar parameter $\gamma^{\text{\tiny{ID}}}(G)$ as well as to the location-domination number of $G$ and its variants, providing bounds that are either tight or almost tight.
\end{abstract}

\noindent\textbf{Keywords:} Identifying codes; total dominating sets; extremal problem; upper bound
\section{Introduction}

An identifying code of a graph is a dominating set that allows distinguishing all pairs of vertices by means of their neighbourhoods within the identifying code. This extensively studied concept is related to other similar notions that deal with domination-based identification of the vertices/edges of a graph or hypergraph, such as locating-dominating sets~\cite{S87}, separating systems~\cite{BS07,B72}, discriminating codes~\cite{CCCHL08}, or test covers~\cite{MS85}. This class of problems has applications in fault-detection in networks~\cite{KCL98,UTS04}, biological diagnosis~\cite{MS85} and machine learning~\cite{BGL19}, to name a few. A total dominating set is a set $D$ of vertices such that every vertex has a neighbour in $D$. The concept of a total dominating set is perhaps the most studied alternative variant in the field of graph domination, see the dedicated book~\cite{bookTD} on this topic.

In this paper, we study total dominating identifying codes, that are sets of vertices that are both identifying codes and total dominating sets. Our focus is on upper bounds and extremal graphs for the smallest size of such a set, as well as bounds involving other related concepts.


\paragraph{Notations and definitions.} In this paper, we consider finite undirected graphs. We first define some basic notations. A vertex $u\in V(G)$ is said to be a \textit{leaf}, if it has degree exactly~1. A vertex $v\in V(G)$ is said to be a \textit{support vertex} if it has an adjacent leaf.
We denote by $L(G)$ the set of leaves and by $S(G)$ the set of support vertices in graph $G$. Moreover, we denote the number of leaves and support vertices by $\ell(G)=|L(G)|$ and $s(G)=|S(G)|$, respectively. The \textit{girth} of a graph is the smallest length of one of its cycles. 

We denote by $N(v)\subseteq V(G)$ the \textit{open neighbourhood} of vertex $v$ and by $N[v]=N(v)\cup\{v\}$, its \textit{closed neighbourhood}. If $C$ is a set of vertices, or a \textit{code}, and $v$, a vertex, we denote the intersection between $N[v]$ and code $C$ by the $I$-set of $v$, $I(v)=N[v]\cap C$. \textit{Identifying codes} were defined over twenty years ago in~\cite{KCL98} by Karpovsky et al. and since then they (and related concepts) have been studied in a large number of articles, see~\cite{biblio} for an online bibliography. A set $C\subseteq V(G)$ is called a \emph{separating code} of $G$ if for each pair of distinct vertices $u,v\in V(G)$, their $I$-sets are distinct, that is, $$I(u)\neq I(v).$$

An \emph{identifying code} of $G$ is a set of vertices that covers every vertex $v$, that is, $I(v)\neq \emptyset$, and is a separating code. (Note that every separating code is ``almost'' an identifying code, as at most one vertex may remain uncovered by the separating code.) A \emph{total dominating identifying code} is a separating code that is also a total dominating set, that is, every vertex of the graph has a neighbour in the code. Any total dominating identifying code is also an identifying code.

The vertices of a code are called \emph{codewords}. A codeword $x$ is said to \emph{separate} two vertices if it belongs to the closed neighbourhood of exactly one of them. We also say that the codeword $x$ separates vertex $u$ from vertex $v$ or vice versa meaning that codeword $x$ separates these two vertices. (We sometimes use \textit{distinguish} as a synonym of separate.) Two vertices are \emph{open twins} if their open neighbourhoods are the same, and \emph{closed twins} if their closed neighbourhoods are the same. A graph admits a separating code if and only if it has no pairs of closed twins~\cite{KCL98}; in that case we say the graph is \emph{identifiable}. We say that a graph is \textit{twin-free} if it contains neither open nor closed twins. Twins are important for (total dominating) identifying codes, indeed closed twins cannot be separated, and for any set of mutually open twins, at most one can be absent from any separating code. A graph admits a total dominating set if and only if its minimum degree is at least~1. For an identifiable graph $G$, we denote by $\ID(G)$ the smallest size of an identifying code of $G$. In the context of total dominating identifying codes, by saying a graph is \emph{identifiable} we  also assume implicitly that it admits a total dominating set. For such an identifiable graph $G$, we denote by $\IDt(G)$ the smallest size of a total dominating identifying code of $G$. Total dominating identifying codes have been studied only in a handful of papers, see~\cite{C08,HHH06,NLG16,OCR14a,OCR14,PGP21,JR18}.

Identifying codes have sometimes been called \emph{differentiating-dominating sets} in the literature, see for example~\cite{GGNUV01}. Total dominating identifying codes have been called \emph{differentiating-total dominating sets}, however due to the now standard term of ``identifying code'' we believe, it is a better choice to call them \emph{total dominating identifying codes}, thus we do so in this paper.

\paragraph{Further related concepts.} Besides identifying and total dominating identifying codes, quite many other related concepts have been studied. We present the relationships between some of these different types of dominating and locating codes in connected graphs in Figure~\ref{Dominating codes}.  As one can see on the figure, total dominating identifying codes are directly related to several important concepts in the area.

A set $C$ is \textit{locating-dominating} if we have $I(u)\neq I(v)$ for each distinct $u,v\not \in C$~\cite{S87}. Furthermore, set $C$ is \textit{locating-total dominating} if it is locating-dominating and total dominating~\cite{HHH06}. A code is \textit{self-identifying} if for any distinct $u,v$ we have $I(u)\setminus I(v)\neq \emptyset$~\cite{JL20}. Self-identifying codes have also been studied as \textit{$(1,\leq1)^+$-identifying codes}~\cite{HL07}. Denote by $I(X)=\bigcup_{u\in X}I(u)$ where $X$ is a set of vertices. Code $C$ is a \textit{$(1,\leq 4)$-identifying code} if for any distinct sets $X,Y$ with $|X|,|Y|\leq 4$ we have $I(X)\neq I(Y)$~\cite{HL07,KCL98}. Moreover, code $C$ is an \textit{error-correcting identifying code} if $|I(u)|\geq3$ for each vertex $u$ and $|I(u)\triangle I(v)|\geq3$ for any distinct vertices $v$ and $u$~\cite{JS22,SS18}. Finally, set $C$ is an \textit{open (neighbourhood) locating-dominating} if we have $N(v)\cap C\neq \emptyset$ for each vertex $v$ and for each distinct pair of vertices $u, v$, we have $N(v)\cap C\neq N(u)\cap C$~\cite{HLS02, SS10}.

Each arc in Figure~\ref{Dominating codes} follows trivially from the above definitions, with the possible exception of the arc from SID to TID and the arcs adjacent to OLD. Assume that $C$ is a self-identifying code in graph $G$ which also admits a total dominating identifying code. If for any $c\in C$ we have $I(c)=\{c\}$ and $u\in N(c)$, then $I(c)\subseteq I(u)$, a contradiction. Thus, $C$ is total dominating and it is identifying by definition. Then, consider the arc from OLD to TLD. Let $C$ be an open locating-dominating set in $G$. Then, $N(v)\cap C\neq \emptyset$ and thus, $C$ is total dominating. Moreover, we have $I(v)\neq I(u)$ for each pair of distinct pair of non-codewords $u$ and $v$. Then, consider the arc from EID to OLD. Let $C$ be an error-correcting-identifying code in connected graph $G$. If $I(v)=\{v\}$ and $u\in N(v)$, then $I(u)=I(\{u,v\}$. Thus, $C$ is total dominating. Moreover, if $N(u)\cap C=N(v)\cap C$, then $I(u)\triangle I(v)\subseteq \{v,u\}$, a contradiction.

The cardinality of an optimal locating-dominating sets in graph $G$ is denoted by $\LD(G)$. Similarly, we use $\LDt(G)$ for locating-total dominating sets, $\OLD(G)$ for open-locating-dominating sets,  $\IDe(G)$ for error-correcting identifying codes and $\IDs(G)$ for self-identifying codes.

\begin{figure}[ht!]
\centering
\scalebox{0.8}{\begin{tikzpicture}

    \node[main node, minimum size=0.85cm] (2)                                     {LD};
    \node[main node, minimum size=0.85cm] (4) [above left = 1.03cm and 1.2cm of 2]{ID};
    \node[main node, minimum size=0.85cm] (6) [above = 2.45cm of 4]                {SID};
    \node[main node, minimum size=0.85cm, fill=gray!50] (7) [above = 2.45cm of 2]               {TID};
    \node[main node, minimum size=0.85cm] (8) [right = 2.8cm of 6]                {EID};
    \node[main node, minimum size=0.85cm] (9) [right = 2.8cm of 2]                {TD};    
    \node[main node, minimum size=0.85cm] (10)[below = 5.8cm of 8]                {D};
    \node[main node, minimum size=0.85cm] (11)[above = 2.45cm of 10]               {TLD};
    \node[main node, minimum size=0.85cm] (12)[above = 2.45cm of 7]               {4ID};
    \node[main node, minimum size=0.85cm] (13) [right = 2.8cm of 7]               {OLD};
    

    \path[draw,thick]
    (2) edge [->]node {} (10)
    (11) edge [->]node {} (2)
    (8) edge [->]node {} (7)
    (7) edge [->]node {} (4)
    (7) edge [->]node {} (11)
    (11) edge [->]node {} (9)
    (6) edge [->]node {} (7)
    (4) edge [->]node {} (2)
	(9) edge [->]node {} (10)
	(12) edge [->]node {} (8)
	(12) edge [->]node {} (6)
	(13) edge [->]node {} (11)
	(8) edge [->]node {} (13)
;
\end{tikzpicture}}
\caption{Relations between some types of dominating sets in connected graphs. An arrow from $X$ to $Y$ denotes that each code of type $X$ in a graph is also a code of type $Y$ (when that graph admits codes of both types). The gray node corresponds to total dominating identifying codes, the main focus of the paper. $D$ stands for dominating set, $TD$ stands for total dominating set, $LD$ stands for locating-dominating set, $TLD$ stands for locating-total dominating set, $OLD$ stands for open-locating-dominating set, $ID$ stands for identifying code, $TID$ stands for total dominating identifying code, $EID$ stands for error-correcting identifying code, $SID$ stands for self-identifying code and $4ID$ stands for $(1,\leq4)$-identifying code.}\label{Dominating codes}
\end{figure}
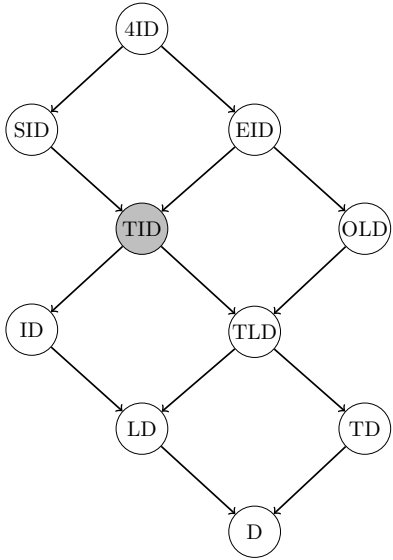

The computational problem associated with determining $\IDt(G)$ for an input graph $G$ is NP-hard, and has been studied in~\cite{PGP21}. Lower and upper bounds for parameter $\IDt$ in trees have been proved in~\cite{C08,HHH06,NLG16,JR18}. Different graph classes, in particular graph products, were studied in~\cite{OCR14a,OCR14}.

\paragraph{Our results.} Our main result is to characterize those graphs $G$ of order $n$ for which $\IDt(G)\geq n-1$. We show that the only connected graph $G$ with $\IDt(G)=n$ is the 3-vertex path $P_3$. The graphs $G$ for which $\IDt(G)=n-1$ form a rich graph class. This class of graphs includes those graphs for which $\ID(G)=n-1$, characterized in~\cite{FGKNPV11} as essentially (1) stars, (2) the complements of half-graphs, and (3) graphs built from any number of graphs from (2) using complete join operations and potentially, the addition of a single universal vertex. We show that besides these examples, one can obtain a graph $G$ with $\IDt(G)\geq n-1$ from a graph from (2) or (3) or the empty graph, by completely joining it to a copy of $K_m$ (for any integer $m\geq 1$), and add a private leaf to each vertex of $K_m$. We then show that these cases are essentially the only possibilities to obtain an extremal graph for parameter $\IDt$.

All the graphs in the above constructions either have many twins, or have (many) short cycles. We show that in the absence of these two obstructions, one can obtain an upper bound on $\IDt$ significantly smaller than $n$. Indeed, we first notice that two bounds from the literature imply that every twin-free tree $T$ of order $n$ satisfies $\IDt(T)\leq 3n/4$, and by a new proof, we generalize this upper bound to all identifiable graphs of girth at least~5. The bound is shown to be tight for certain trees, and for the cycle $C_8$.

Finally, we study the ratio between parameter $\IDt$ and related parameters; natural lower bounds for $\IDt(G)$ are $\LD(G)$, $\LDt(G)$ and $\ID(G)$, as we can see from Figure \ref{Dominating codes}. We show that for any identifiable graph $G$, $\IDt(G)\leq 2\ID(G)-2$ and $\IDt(G)\leq 2\LDt(G)$ (both bounds are tight). Interestingly, we can show that $\IDt(G)\leq 3\LD(G)-\log_2(\LD(G)+1)$, and we show the bound is nearly tight, as there are infinitely many connected graphs $G$ for which $\IDt(G)=3\LD(G)-2\log_2(\LD(G)+1)$. Moreover, we also show that without restricting the class of graphs, neither $\IDs(G)$ nor $\IDe(G)$ gives a useful upper bound for $\IDt(G)$. In other words, there are graphs for which $\IDt(G)$ is much smaller than either of $\IDs(G)$ or $\IDe(G)$.

We present our characterization of extremal graphs in Section~\ref{sec:extremal}. The bound for twin-free graphs of girth at least~5 is presented in Section~\ref{sec:twinfree}. The bounds relating $\IDt$ to related parameters are presented in Section~\ref{sec:bounds-related}. We conclude in Section~\ref{sec:conclu}.

\paragraph{Further related work.} Our results were inspired by the related work below.

Characterizations of extremal graphs for identifying codes and related parameters were studied in several papers, for example for locating-dominating sets~\cite{CHMPP12,CHL07}, for identifying codes~\cite{CHL07,FGKNPV11}, for open neighbourhood locating-dominating sets~\cite{FGRS21}, and for discriminating codes~\cite{CCCHL08}.

Bounds for twin-free graphs have been studied for related graph parameters. It was proved in~\cite{ourIDpaper} that for every twin-free bipartite graph $G$ of order $n$, $\ID(G)\leq 2n/3$, and the bound is tight exactly for 2-coronas of bipartite graphs (that is, bipartite graphs $B$ for which a private copy of $P_2$ is attached to each vertex of $B$ by one of the ends of $P_2$). It was proved in~\cite{GGM14} that every twin-free bipartite graph and every twin-free graph with no $4$-cycles has a locating-dominating set of size at most $n/2$; the bound is tight for infinitely many trees, which are characterized in~\cite{Heia}. In~\cite{FH16}, it was proved that every twin-free graph with no $4$-cycle has a locating-total dominating set of size at most $2n/3$. It is conjectured that these two bounds hold for all twin-free graphs~\cite{FH16,GGM14}.

Bounds for graphs of girth at least~5 were given for identifying codes in~\cite{BFH15,ourIDpaper,FP11}. In particular, generalizing a result from~\cite{BFH15}, it is shown in~\cite{ourIDpaper} that for every graph $G$ of order $n$ and girth at least~5, we have $\ID(G)\leq\frac{5n+2\ell(G)}{7}$, a bound which is tight.

Relations between identification-type graph parameters were provided in~\cite{GKM08} (locating-dominating sets and identifying codes) and~\cite{SewellPhD} (locating-dominating sets, identifying codes, and open-locating-dominating sets). It is shown that for every graph $G$, any two of these parameters' values cannot be more than a factor~2 apart from each other. Such bounds do not seem to be known for total dominating identifying codes, however, in \cite[Theorem 2.3]{OCR14}, infinitely many graphs $G$ satisfying $\IDt(G)=\tfrac{3}{2}\ID(G)$ are constructed.

\section{Characterizing graphs with largest possible total dominating identifying codes}\label{sec:extremal}

In this section, we characterize the graphs which attain extremal values for total dominating identifying codes. 

\subsection{Preliminaries}

One can easily check that no graph of order at most~2 admits a total dominating identifying code, since $P_1$ has no total dominating set and $P_2$ is not identifiable. We start by showing that $P_3$ is the only connected identifiable graph of order $n$ whose smallest total dominating identifying code has size $n$.

\begin{proposition}\label{Prop:IDtn-1}
If $G$ is a connected identifiable graph of order $n$ (thus $n\geq 3$), then we have $\IDt(G)\leq n-1$, unless $G$ is $P_3$ (and $\IDt(P_3)=3$).
\end{proposition}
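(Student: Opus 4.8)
The plan is to start from the fact that the whole vertex set is always a solution and then delete one vertex. Since $G$ is identifiable it has no closed twins, so $V(G)$ is separating; since $G$ admits a total dominating set its minimum degree is at least $1$, so $V(G)$ is total dominating, giving $\IDt(G)\le n$. To reach $n-1$ I would look for a single vertex $v$ such that $C=V(G)\setminus\{v\}$ is still a total dominating identifying code, and first record a clean removability criterion. For total domination, $C$ fails exactly when some vertex loses its unique neighbour, i.e. precisely when $v$ is a support vertex; so we need $v$ to be a non-support vertex. For separation, note that with respect to $C$ every vertex $x$ has $I(x)=N[x]\setminus\{v\}$; I would check that, because $G$ has no closed twins, $v$ itself can never acquire the $I$-set of another vertex (both subcases force $N[v]=N[y]$, a contradiction), so separation can only break when two vertices $u,w$ distinct from $v$ satisfy $N[u]\triangle N[w]=\{v\}$, that is, when $u$ and $w$ become closed twins in $G-v$. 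Hence $v$ is removable if and only if $v$ is not a support vertex and $G-v$ has no closed twins.

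With this criterion the proposition reduces to a purely structural statement: every connected identifiable $G\neq P_3$ has a non-support vertex whose deletion creates no closed twins. I would argue by contradiction, assuming every vertex is either a support vertex or has its deletion create a closed-twin pair. The creation of twins $\{u,w\}$ upon deleting $v$ is very rigid: one gets $u\sim w$, $v\sim u$, $v\not\sim w$ and $N[u]=N[w]\cup\{v\}$, so $\deg u=\deg w+1$ and $u,w$ share all their remaining neighbours. I would first exploit leaves, which are automatically non-support when $n\ge 3$: a leaf $\ell$ with support $s$ is unsafe to delete only if $s$ has a near-twin $w$ with $N[w]=N[s]\setminus\{\ell\}$. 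From this several cases fall quickly — if some support carries two leaves, or has degree $2$, then one of its leaves is safely removable unless the graph collapses to $P_3$.

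The residual situation is that every support has exactly one leaf and degree at least $3$, together with the leaf-free sub-case. Here I would turn to the near-twin witness itself: when a leaf $\ell$ at $s$ is unsafe, the witness $w$ with $N[w]=N[s]\setminus\{\ell\}$ is forced to be non-support (each of its neighbours has degree at least $2$), and it is a natural next candidate to delete. More systematically I would study the relation $b\prec a$ meaning $a\sim b$ and $N[b]\subsetneq N[a]$, whose covering steps are exactly the unsafe labels, and pick an extremal element of this inclusion order to locate a vertex that is neither a support nor a label. A structural fact worth leaning on is that a near-twin pair $u\sim w$ must share every common neighbour, so such pairs always sit inside a triangle — which both constrains the configurations and connects to the girth hypotheses used later in the paper.

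I expect the main obstacle to be precisely this last step: excluding graphs in which every vertex is simultaneously blocked, either as a support vertex or as the label of some near-twin pair. The delicate point is that the safe-to-delete vertices and the non-support vertices could a priori fail to meet, and $P_3$ shows this intersection can genuinely be empty (its only twin-safe deletion is its support vertex). The crux is therefore a careful analysis of how support vertices and near-twin witnesses interact, showing that the only way to block every vertex at once is the degenerate configuration realised by $P_3$.
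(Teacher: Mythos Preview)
Your removability criterion is correct and cleanly stated: $V(G)\setminus\{v\}$ is a total dominating identifying code if and only if $v$ is not a support vertex and no pair $u,w$ satisfies $N[u]\triangle N[w]=\{v\}$. The reduction to the structural lemma ``some non-support vertex is not the unique separator of any pair'' is also right. But the proposal does not prove that lemma, and you explicitly flag the residual cases as open. This is a genuine gap, not a routine detail: already the leaf-free subcase of your lemma is exactly the statement that every identifiable graph has a vertex $x$ with $V(G)\setminus\{x\}$ identifying, which is a known but non-obvious theorem of Gravier and Moncel~\cite{GM07}. Your sketch via extremal elements of the order $b\prec a\iff a\sim b,\ N[b]\subsetneq N[a]$ does not supply a proof of it, and in the leaf case you stop after observing that the witness $w$ with $N[w]=N[s]\setminus\{\ell\}$ is non-support, without establishing that its deletion is safe.

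The paper bypasses the difficulty by citing~\cite{GM07} as a black box when $G$ has no support vertex, and otherwise arguing directly: pick a leaf $y$ with support $x$ and try $V(G)\setminus\{y\}$; if separation fails, the unique obstruction is a vertex $v$ with $N[x]=N[v]\cup\{y\}$. Since $n\ge 4$ forces $x$ and $v$ to share a neighbour $w$, and every neighbour of $v$ is a non-leaf neighbour of $x$, the vertex $v$ is not a support vertex; then $V(G)\setminus\{v\}$ works because $x$ separates every pair that $v$ did, except $\{x,y\}$, which is handled by $w$. This is precisely the ``try the witness next'' move you gestured at, but carried out in three lines instead of an open-ended extremal argument.
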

\begin{proof}
Since $P_3$ is the only identifiable graph of order at most~$3$ admitting a total dominating set and $\IDt(P_3)=3$, we may assume $n\geq 4$. It is known that for any identifiable graph $G$ with at least one edge, there is always a vertex $x$ such that $V(G)\setminus\{x\}$ is an identifying code of $G$, see~\cite{GM07}. Moreover, $V(G)\setminus\{x\}$ is a total dominating set, unless $x$ is a support vertex. Thus, if there are no support vertices in $G$, we are done. Otherwise, let $x$ be a support vertex of $G$ and let $y$ be a leaf neighbour of $x$. Since $V(G)\setminus\{y\}$ is a total dominating set, if $V(G)\setminus\{y\}$ is also an identifying code, then we are done. Otherwise, there must exist two vertices $u,v$ of $G$ that can only be distinguished by $y$, that is, such that $N[u]=N[v]\cup\{y\}$. If $y\in\{u,v\}$, then $y=u$ and $v=x$, but this is not possible since $y$ separates $u$ and $v$, a contradiction. Hence, $u$ is a neighbour of $y$, that is, $u=x$. Since $n\geq 4$, $u$ and $v$ have a common neighbour, say $w$. As $v$ is not a support vertex, the set $V(G)\setminus\{v\}$ is a total dominating set. We claim that it is also an identifying code of $G$, which would prove the claim. Indeed, any pair $s,t$ of vertices with $y\notin\{s,t\}$ such that $v$ separates $s$ from $t$, is also separated by $u$, and $y$ is separated from $x$ by $w$ (and from every other vertex by itself).
\end{proof}

The authors of~\cite{HHH06} characterized the trees $T$ of order $n\geq 4$ with $\IDt(T)=n-1$ to be exactly the set of stars, and $P_4$. The set of graphs $G$ of order $n\geq 4$ with $\IDt(G)=n-1$ necessarily contains all those graphs without isolated vertices for which $\ID(G)=n-1$ (except $P_3$). The graphs $G$ of order $n$ with $\ID(G)=n-1$ were characterized in~\cite{FGKNPV11}, based on the following graph families.

\begin{definition}[\cite{FGKNPV11}]
For any non-negative integer $k$, we define the graph $A_k$ of order $2k$ as the graph on vertex set $\{x_1,\ldots,x_{2k}\}$ where $x_i$ is adjacent to $x_j$ if and only if $|i-j|\leq k-1$.

We denote by $\mathcal{A}$ the set of graphs obtained by taking any number (possibly, zero) of disjoint copies of graphs in the family $\{A_k~|~k\geq 1\}$ and joining every pair of these graphs by all possible edges between them. We denote by $\mathcal{A}^*$ the set $\mathcal{A}$ without the graphs $A_0$ and $A_1$.

For two graphs $G$ and $H$, we denote by $G\bowtie H$ the \emph{complete join} of $G$ and $H$, that is, the graph obtained from a copy of $G$ and a copy of $H$ by adding all possible edges between the two copies. For a set $\mathcal{C}$ of graphs, we denote by $\mathcal{C}\bowtie K_1$ the set of graphs $\{G\bowtie K_1\mid G\in \mathcal{C}\}$. 
\end{definition}

Note that $A_0$ is the empty graph, $A_1$ is the edgeless graph of order~2, and $A_2$ is the 4-vertex path. For $k\geq 2$, the graph $A_k$ is isomorphic to the $(k-1)$-th power of the path $P_{2k}$ and can be partitioned into two cliques, as shown in Figure~\ref{fig:A_k}. In fact $A_k$ is the complement of the \emph{half-graph} of order $2k$ (half-graphs form a family of special bipartite graphs defined by Erd\H{o}s and Hajnal, see~\cite{EH84}).

An example of a graph in $\mathcal{A}$ is provided in Figure~\ref{fig:extremal-examples}. As a special case, the set $\mathcal{A}$ contains all even-order complete graphs minus a maximum matching (by considering only copies of $A_1$ in the construction), and by the addition of a universal vertex, $\mathcal{A}\bowtie K_1$ contains all odd-order complete graphs minus a maximum matching.

    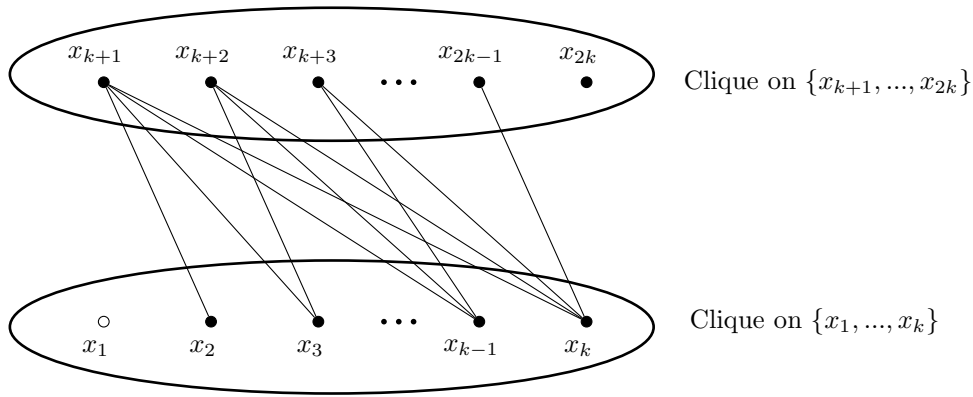
\begin{figure}[ht]
        \begin{center}
\scalebox{1}{\begin{tikzpicture}[join=bevel,inner sep=0.5mm,]
  \node (x_kplus1) at (90bp,200bp) [draw, circle, fill=black] {};
  \node  at (87bp,210bp) {$x_{k+1}$};
  \node (x_kplus2) at (130bp,200bp) [draw, circle, fill=black] {};
  \node  at (127bp,210bp) {$x_{k+2}$};
  \node (x_kplus3) at (170bp,200bp) [draw, circle, fill=black] {};
  \node  at (167bp,210bp) {$x_{k+3}$};
  \node[scale=2] at (200bp,200bp) {$...$};
  \node (x_2kminus1) at (230bp,200bp) [draw, circle, fill=black] {};
  \node  at (227bp,210bp) {$x_{2k-1}$};
  \node (x_2k) at (270bp,200bp) [draw, circle, fill=black] {};
  \node  at (267bp,210bp) {$x_{2k}$};
  \node (x1) at (90bp,110bp) [draw, circle] {};
  \node  at (87bp,100bp) {$x_1$};
  \node (x2) at (130bp,110bp) [draw, circle, fill=black] {};
  \node  at (127bp,100bp) {$x_2$};
  \node (x3) at (170bp,110bp) [draw, circle, fill=black] {};
  \node  at (167bp,100bp) {$x_3$};
  \node[scale=2] at (200bp,110bp) {$...$};
  \node (x_kminus1) at (230bp,110bp) [draw, circle, fill=black] {};
  \node  at (227bp,100bp) {$x_{k-1}$};
  \node (x_k) at (270bp,110bp) [draw, circle, fill=black] {};
  \node  at (267bp,100bp) {$x_k$};
  \draw [-] (x_kplus1) -- node[above,sloped] {} (x_k);
  \draw [-] (x_kplus1) -- node[above,sloped] {} (x2);
  \draw [-] (x_kplus1) -- node[above,sloped] {} (x3);
  \draw [-] (x_kplus1) -- node[above,sloped] {} (x_kminus1);
  \draw [-] (x_kplus2) -- node[above,sloped] {} (x_kminus1);
  \draw [-] (x_kplus2) -- node[above,sloped] {} (x_k);
  \draw [-] (x_kplus2) -- node[above,sloped] {} (x3);
  \draw [-] (x_kplus3) -- node[above,sloped] {} (x_kminus1);
  \draw [-] (x_kplus3) -- node[above,sloped] {} (x_k);
  \draw [-] (x_2kminus1) -- node[above,sloped] {} (x_k);
  \node at (360bp,200bp) {Clique on $\{x_{k+1},...,x_{2k}\}$};
\draw[line width=1pt, draw = black] (175bp,203bp) ellipse(120bp and 25bp) node{};
  \node at (355bp,110bp) {Clique on $\{x_1,...,x_k\}$};
  \draw[line width=1pt, draw = black] (175bp,108bp) ellipse(120bp and 25bp) node{};
  \end{tikzpicture}}
  \end{center}
  \caption{The graph $A_{k}$ of order $n=2k$ has total dominating identifying code number $n-1$ (the black vertices form an optimal total dominating identifying code).}
\label{fig:A_k}
\end{figure}

    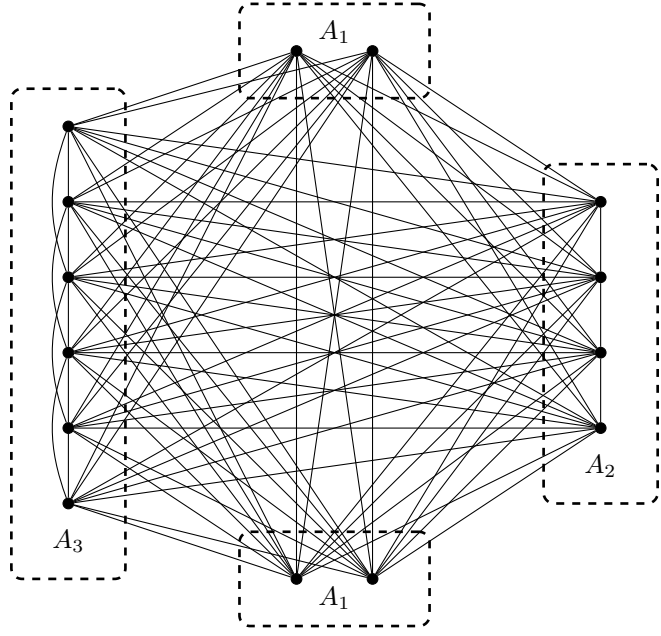
\begin{figure}[ht]
        \begin{center}
\scalebox{1}{\begin{tikzpicture}[join=bevel,inner sep=0.5mm,]
    \node (x1) at (0,0) [draw, circle, fill=black] {};
    \node (x2) at (0,1) [draw, circle, fill=black] {};
    \node (x3) at (0,2) [draw, circle, fill=black] {};
    \node (x4) at (0,3) [draw, circle, fill=black] {};
    \node (x5) at (0,4) [draw, circle, fill=black] {};
    \node (x6) at (0,5) [draw, circle, fill=black] {};
    \draw [-] (x1) -- (x2) -- (x3) -- (x4) -- (x5) -- (x6);
    \draw (x1) to[bend left=20] (x3);
    \draw (x2) to[bend left=20] (x4);
    \draw (x3) to[bend left=20] (x5);
    \draw (x4) to[bend left=20] (x6);
    \node[rectangle, line width=1pt, draw = black, dashed, rounded corners, minimum width=1.5cm, minimum height=6.5cm] (A3) at (0,2.25) {};
    \node (A3name) at (0,-0.5) {$A_3$};

    \node (y1) at (7,1) [draw, circle, fill=black] {};
    \node (y2) at (7,2) [draw, circle, fill=black] {};
    \node (y3) at (7,3) [draw, circle, fill=black] {};
    \node (y4) at (7,4) [draw, circle, fill=black] {};

    \draw [-] (y1) -- (y2) -- (y3) -- (y4);
    \node[rectangle, line width=1pt, draw = black, dashed, rounded corners, minimum width=1.5cm, minimum height=4.5cm] (A2) at (7,2.25) {};
    \node (A2name) at (7,0.5) {$A_2$};
    
    \node (a1) at (3,6) [draw, circle, fill=black] {};
    \node (a2) at (4,6) [draw, circle, fill=black] {}; 
    \node (b1) at (3,-1) [draw, circle, fill=black] {};
    \node (b2) at (4,-1) [draw, circle, fill=black] {}; 
    \node[rectangle, line width=1pt, draw = black, dashed, rounded corners, minimum width=2.5cm, minimum height=1.25cm] (A1) at (3.5,-1) {};
    \node (A1name) at (3.5,-1.25) {$A_1$};    
    \node[rectangle, line width=1pt, draw = black, dashed, rounded corners, minimum width=2.5cm, minimum height=1.25cm] (A1b) at (3.5,6) {};
    \node (A1bname) at (3.5,6.25) {$A_1$};    
    
    \foreach \i in {1,2}\foreach \j in {1,2} \draw [-] (a\i)--(b\j);
    \foreach \i in {1,2,3,4,5,6}\foreach \j in {1,2,3,4} \draw [-] (x\i)--(y\j);
    \foreach \i in {1,2}\foreach \j in {1,2,3,4,5,6} \draw [-] (a\i)--(x\j) (b\i)--(x\j);
    \foreach \i in {1,2}\foreach \j in {1,2,3,4} \draw [-] (a\i)--(y\j) (b\i)--(y\j);  
  
  \end{tikzpicture}}
  \end{center}
  \caption{A graph in $\mathcal A^*$ built from two copies of $A_1$ and from one copy of each of $A_2$ and $A_3$.}
\label{fig:extremal-examples}
\end{figure}

For completeness, we give a proof for the following result from~\cite{FGKNPV11}, initially stated for usual identifying codes but which also holds for total dominating identifying codes.

\begin{proposition}[\cite{FGKNPV11}]\label{prop:extremal-family-IDtn-1}
For every graph $G$ of order $n$ in $\mathcal{A}^*\cup(\mathcal{A}^*\bowtie K_1)$, every separating code has size at least $n-1$, and $\IDt(G)=n-1$. Moreover, if $G\in \mathcal{A}^*\bowtie K_1$, then the only separating code is $V(G)$ minus the unique universal vertex. 
\end{proposition}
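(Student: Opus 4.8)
The statement has three parts, and the plan is to analyze the structure of separating codes in the graphs from $\mathcal{A}^*\cup(\mathcal{A}^*\bowtie K_1)$ by exploiting the clique-partition structure of the building blocks $A_k$. The key observation is that each $A_k$ (for $k\geq 2$) partitions into two cliques, namely $\{x_1,\dots,x_k\}$ and $\{x_{k+1},\dots,x_{2k}\}$, and within a clique, a single non-codeword is allowed at most, since two non-codewords in a common clique would be hard to separate. More precisely, I would first establish the lower bound $\IDt(G)\geq n-1$ via the general principle already noted in the paper: for any family of mutually open twins (or within structures forcing such behaviour) at most one vertex may be absent from a separating code. The crucial point to verify is that in a graph $G\in\mathcal{A}^*$, the whole vertex set is so tightly interconnected by the complete joins that \emph{any} two non-codewords would fail to be separated, forcing at least $n-1$ codewords.

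For the lower bound, I would argue by contradiction: suppose a separating code $C$ omits two distinct vertices $u,v\notin C$. I would split into cases according to whether $u,v$ lie in the same copy of some $A_k$ or in different copies. If they lie in different copies (or one lies in the universal vertex's position), the complete join between copies means $u$ and $v$ share all codewords coming from the other blocks in their closed neighbourhoods, so separation must come from within their own blocks; a careful count using the clique structure of $A_k$ (where consecutive vertices have nested neighbourhoods, reflecting the half-graph complement property) shows this is impossible once both are non-codewords. If they lie in the same copy of $A_k$, I would use the explicit adjacency rule $x_i\sim x_j\iff|i-j|\leq k-1$ to show that the only pair of vertices separated by a single vertex is separated precisely by an endpoint-type codeword, and omitting both members of such a pair destroys separation. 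The fact that $A_0$ and $A_1$ are excluded (i.e.\ we work in $\mathcal{A}^*$) is what guarantees every block has the nested-neighbourhood structure needed for this argument; I expect this case analysis to be the main obstacle, since one must handle the interaction between the intra-block half-graph structure and the inter-block complete joins simultaneously.

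For the upper bound $\IDt(G)\leq n-1$, I would exhibit an explicit total dominating identifying code of size $n-1$. The natural candidate, suggested by Figure~\ref{fig:A_k}, is to take all vertices except one carefully chosen vertex (for a single block $A_k$, omitting $x_1$ works, as the black vertices in the figure indicate). I would verify that this set is (i) total dominating, since every vertex retains a neighbour in the large remaining code thanks to the clique and join structure, and (ii) separating, by checking that every pair of vertices is distinguished by some retained codeword, again leveraging the nested neighbourhoods so that distinct vertices have distinct closed neighbourhoods intersected with the code. For the graphs in $\mathcal{A}^*\bowtie K_1$, the universal vertex $w$ satisfies $I(w)=\{w\}\cup(V(G)\setminus\{w\})$ restricted to $C$, and I would show that omitting $w$ itself yields the unique valid separating code: since $w$ is universal, it lies in every closed neighbourhood and hence contributes nothing to separation, so $w$ may be dropped; conversely, dropping any non-universal vertex would leave two vertices unseparated by the lower-bound argument applied to the underlying $\mathcal{A}^*$ graph.

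Finally, for the uniqueness claim in the join case, I would combine the two directions: the lower bound forces exactly one omitted vertex, and the universality of $w$ forces that omitted vertex to be $w$, because if some $x\neq w$ were omitted instead, then $w\in C$ would be universal and thus fail to separate any pair it is adjacent to, reducing separation entirely to the underlying $\mathcal{A}^*$ copy on one fewer codeword, contradicting the sharp lower bound there. I would close by confirming that $V(G)\setminus\{w\}$ is indeed total dominating (every vertex has a neighbour among the non-universal vertices, since each block $A_k\in\mathcal{A}^*$ has minimum degree at least~1 and the blocks are completely joined) and separating, completing the proof that $\IDt(G)=n-1$ with the stated unique optimal code.
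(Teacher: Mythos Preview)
Your lower-bound strategy has a genuine gap. You propose to show that if a separating code $C$ omits two vertices $u,v$, then \emph{$u$ and $v$ themselves} fail to be separated. This is false. Take $G=A_2\bowtie A_2$ (the complete join of two $P_4$'s, with vertices $x^s_1,x^s_2,x^s_3,x^s_4$ for $s\in\{1,2\}$) and omit $u=x^1_1$ and $v=x^2_1$. Then $I(u)=\{x^1_2,x^2_2,x^2_3,x^2_4\}$ and $I(v)=\{x^2_2,x^1_2,x^1_3,x^1_4\}$ are distinct, so $u$ and $v$ \emph{are} separated. The code still fails, but elsewhere: $N[x^1_3]=V(G)\setminus\{x^1_1\}$ and $N[x^2_3]=V(G)\setminus\{x^2_1\}$, so $I(x^1_3)=I(x^2_3)=C$. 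Thus the unseparated pair need not be the omitted one, and your case split (same copy / different copies) on the \emph{non-codewords} does not yield the contradiction you want. Your same-copy analysis is similarly off: for $1\leq i\leq k-1$ the unique separator of $x_i,x_{i+1}$ in $A_k$ is $x_{i+k}$, not an ``endpoint-type'' vertex, and the pair $x_k,x_{k+1}$ has \emph{two} separators ($x_1$ and $x_{2k}$), not one.

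The paper's proof avoids this by working forward rather than by contradiction: for each consecutive pair $x^s_i,x^s_{i+1}$ it exhibits the \emph{unique} vertex separating them and forces that vertex into $C$. This pins down all of $x^s_2,\dots,x^s_{2i_s-1}$ in every block, leaving only $x^s_1$ and $x^s_{2i_s}$ undetermined (with at least one of them in $C$). Using the block-wise symmetry one may assume each potential non-codeword is of the form $x^s_{2i_s}$; then $N[x^s_{i_s}]=V(G)\setminus\{x^s_{2i_s}\}$, so the pair $x^s_{i_s},x^t_{i_t}$ is separated only by $x^s_{2i_s}$ or $x^t_{2i_t}$, and at most one such vertex can be omitted globally. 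For $G\in\mathcal{A}^*\bowtie K_1$, the universal vertex $w$ must additionally be separated from each $x^s_{i_s}$, which forces every $x^s_{2i_s}$ (and symmetrically every $x^s_1$) into $C$, leaving $V(G)\setminus\{w\}$ as the only option. Your uniqueness argument via ``$w$ separates nothing, so drop it and contradict the lower bound on $G$'' is actually fine and equivalent to this last step; it is only the lower-bound mechanism that needs to be replaced by the unique-separator analysis.
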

\begin{proof}
First, assume that $k\geq 2$, we show that $\IDt(A_k)=2k-1$. For every $i$ with $1\leq i\leq k-1$, $x_{k+i}$ is the only vertex separating $x_i$ from $x_{i+1}$ and similarly, $x_{k-i+1}$ is the only vertex separating $x_{2k-i}$ and $x_{2k-i+1}$; thus, all of $x_2,\ldots,x_{2k-1}$ must belong to any separating code of $A_k$. Finally, $x_{k}$ and $x_{k+1}$ can only be separated by one of $x_1$ and $x_{2k}$. Thus, any separating code has size at least $2k-1=n-1$, and so $\IDt(G)\geq n-1$. The set $V(A_k)\setminus\{x_1\}$ is a total dominating identifying code of size $n-1$.

Now, consider any graph in $\mathcal{A}^*$ and number the $\ell\geq 1$ copies of graphs in $\{A_k~|~k\geq 1\}$ as $A^1_{i_1},\ldots,A^\ell_{i_\ell}$ where $A^s_{i_s}$ is a copy of $A_{i_s}$ whose vertices are labeled $x_1^s,\ldots,x^s_{2i_s}$. Consider any copy $A^s_{t}$ (where $t=i_s$) of $A_t$ from the construction of $G$. If $t\geq 2$, by the same arguments as above, we see that all the vertices $x^s_2,\ldots,x^s_{2t-1}$ from $A^s_t$ must belong to any separating code of $G$ (since all vertices of $A^s_t$ have the same neighbourhoods outside of $A^s_t$), and at least one of $x^s_{1}$ and $x^s_{2t}$ in $A^s_t$ must belong to the code in order to separate $x^s_t$ from $x^s_{t+1}$. If $t=1$, one of $x^s_1$ and $x^s_2$ necessarily belongs to the code since these two vertices are open twins in $G$. Without loss of generality, by the symmetries of $x^s_{1}$ and $x^s_{2t}$, we assume that $x^s_1$ belongs to the code, and we also assume that $x^s_1$ belongs to the code for each copy $A^s_{i_s}$ in $G$ ($1\leq s\leq\ell$). Now, for any pair of copies $A^s_{i_s}$ and $A^t_{i_t}$ in $G$, notice that $x^s_{i_s}$ and $x^t_{i_t}$ can only be separated by one of $x^s_{2i_s}$ and $x^t_{2i_t}$. Hence, at most one vertex of type $x^j_{2i_j}$ in some $A^j_{i_j}$ can be omitted from any separating code, and so any separating code has size $n-1$. Note that $V(G)\setminus\{x^j_{2i_j}\}$ is a total dominating identifying code of size $n-1$.

Assume now that $G\in \mathcal{A}^*\bowtie K_1$ and let $u$ be the universal vertex from the copy of $K_1$ in $G$. We use the same reasoning, to show that all vertices except possibly $u$ and some $x^j_{2i_j}$ must belong to the separating code. However, to separate $u$ from $x^j_{i_j}$, we must also include vertex $x^j_{2i_j}$, and so again any separating code has size at least $n-1$, and $\IDt(G)\geq n-1$. Moreover, $V(G)\setminus\{u\}$ is a total dominating identifying code of size $n-1$.
\end{proof}

The following characterization was proved in~\cite{FGKNPV11}.

\begin{theorem}[\cite{FGKNPV11}]\label{The:IDn-1}
If $G$ is a connected identifiable graph on $n$ vertices, then $\ID(G)=n-1$ if and only if $G\in \{K_{1,t}\mid t\geq2\}\cup \mathcal{A}^*\cup(\mathcal{A}^*\bowtie K_1).$
\end{theorem}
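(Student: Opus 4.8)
The statement is an ``if and only if'', so I would prove the two directions separately; the bulk of the work lies in the ``only if'' direction. For the ``if'' direction, the families $\mathcal A^*$ and $\mathcal A^*\bowtie K_1$ are already handled by Proposition~\ref{prop:extremal-family-IDtn-1}: it shows that every separating code of such a graph has size at least $n-1$, and that $V(G)\setminus\{x\}$ is a total dominating identifying code (in particular an identifying code) of size $n-1$ for a suitable $x$. Since every identifying code is a separating code, this gives $\ID(G)\geq n-1$, and the explicit code gives $\ID(G)\leq n-1$, hence $\ID(G)=n-1$. For a star $K_{1,t}$ with $t\geq 2$ I would argue directly: any two leaves $\ell_i,\ell_j$ satisfy $N[\ell_i]\triangle N[\ell_j]=\{\ell_i,\ell_j\}$, so at least one of each such pair lies in any separating code, forcing all but at most one leaf into the code; domination of the possibly omitted leaf then forces either the centre or that last leaf as well, so $\ID(K_{1,t})\geq t=n-1$, while the two leaves (when $t=2$) or the centre together with $t-1$ leaves (when $t\geq 3$) form an identifying code of size $n-1$.

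For the ``only if'' direction I would recast $\ID(G)=n-1$ as a hitting-set/deletability statement. A set $C$ is an identifying code exactly when it meets every member of the family $\{N[a]\triangle N[b]\mid a\neq b\}\cup\{N[w]\mid w\in V\}$, and deletability of a vertex set (its complement still being such a hitting set) is downward closed. Hence $\ID(G)=n-1$ is equivalent to saying that a single vertex can be removed from $V$ while keeping an identifying code, but no two vertices can be removed simultaneously. In a connected graph of order at least $3$, removing one vertex never destroys domination, so $x$ is removable iff no pair satisfies $N[a]\triangle N[b]=\{x\}$; call such an $x$ \emph{non-forced}, let $U$ be the set of non-forced vertices, and note that every separating code contains $V\setminus U$. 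Analysing when a pair $\{u,v\}$ is simultaneously removable, I would establish the clean equivalence: $\ID(G)=n-1$ iff $U\neq\emptyset$ and every pair $\{u,v\}\subseteq U$ is a \emph{$2$-edge}, meaning either $N[a]\triangle N[b]=\{u,v\}$ for some pair $a,b$ (separation type) or $N[w]=\{u,v\}$ for some leaf $w$ (domination type). This reduces the theorem to a purely local structural condition on $U$.

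The remaining and hardest step is to reconstruct $G$ from this condition. The domination-type $2$-edges produce support-and-leaf configurations and should funnel into the star family, while the separation-type $2$-edges force many pairs of vertices to be ``near-twins'' (agreeing on all but two coordinates of their closed neighbourhoods). Here I would pass to the complement $\bar G$, where $N_G[a]\triangle N_G[b]=N_{\bar G}(a)\triangle N_{\bar G}(b)$, so the whole condition becomes a statement about \emph{open} neighbourhoods in $\bar G$; the nested ``staircase'' of open neighbourhoods characteristic of half-graphs should then emerge, identifying each twin block of $G$ as a complement of a half-graph (some $A_k$), the cross-block conditions as the complete join into a member of $\mathcal A^*$, and a possible additional non-forced vertex as the optional universal vertex accounting for the $\mathcal A^*\bowtie K_1$ case.

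I expect the main obstacle to be precisely this reconstruction: showing that the local constraint ``every pair of non-forced vertices is a $2$-edge'' rigidly forces the global nested-interval (half-graph) structure and nothing else. The delicate points are controlling the interaction between forced and non-forced vertices (a forced vertex arises from a \emph{distance-one} pair of closed neighbourhoods, so I must track how these distance-one and distance-two relations coexist), verifying connectivity and ruling out spurious configurations, and correctly separating the star case from the join case. I anticipate organising the argument by induction on the number of twin classes, or equivalently on the number of connected components (half-graphs) in the complement $\bar G$.
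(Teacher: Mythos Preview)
The paper does not contain a proof of this theorem: it is quoted verbatim from~\cite{FGKNPV11} and used as a black box throughout (see the sentence ``The following characterization was proved in~\cite{FGKNPV11}'' immediately preceding the statement). So there is no in-paper argument to compare your proposal against.

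That said, a few remarks on your outline. Your ``if'' direction is fine and, for the non-star families, is exactly how the present paper justifies the lower bound via Proposition~\ref{prop:extremal-family-IDtn-1}. Your ``only if'' strategy---recasting $\ID(G)=n-1$ as ``no two vertices are simultaneously deletable'', isolating the set $U$ of non-forced vertices, and then extracting the half-graph structure in the complement---is in spirit the approach taken in~\cite{FGKNPV11}, where the key notion is that of a vertex $f$ \emph{forced} by a pair $a,b$ with $N[a]\triangle N[b]=\{f\}$. You correctly identify the hard part: showing that the local ``every pair in $U$ is a $2$-edge'' constraint rigidifies into the global nested structure of the $A_k$'s joined together. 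One point to be careful about in your reconstruction is the role of domination-type $2$-edges: your claim that ``removing one vertex never destroys domination'' in a connected graph of order at least $3$ is false (removing the unique support vertex of a leaf destroys domination of that leaf), so the condition for a single vertex $x$ to be removable is not only that no pair has $N[a]\triangle N[b]=\{x\}$, but also that $N[w]\neq\{x\}$ for every $w$; this is precisely what lets stars into the picture and you should track it explicitly rather than fold it into the pair analysis.
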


\subsection{The characterization}

We next state our characterization theorem, which we will prove after some preliminary lemmas.

\begin{theorem}\label{The:IDtn-1Char}
For any connected graph $G$ on $n\geq3$ vertices, we have $\IDt(G)\geq n-1$ if and only if either:
\begin{itemize}
    \item[(i)] $\ID(G)\geq n-1$, that is $G\in  \{K_{1,t}\mid t\geq2\}\cup \mathcal{A}^*\cup(\mathcal{A}^*\bowtie K_1)$, or 
    \item[(ii)] $G'=G''\bowtie K_m$, where $m\geq1$ and $G''\in \mathcal{A}\cup(\mathcal{A}\bowtie K_1)$, and $G$ is obtained from $G'$ by attaching a leaf to each vertex in the clique $K_m$.
    \end{itemize}
    Moreover, $\IDt(G)= n$ if and only if $G=P_3$.
\end{theorem}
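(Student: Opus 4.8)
\textbf{Proof strategy for Theorem~\ref{The:IDtn-1Char}.}

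The plan is to prove both directions of the characterization, treating the ``if'' direction first since it is largely a verification, then the harder ``only if'' direction. For the ``if'' direction, case (i) is immediate: if $\ID(G)\geq n-1$, then since any total dominating identifying code is in particular an identifying code, we get $\IDt(G)\geq\ID(G)\geq n-1$. For case (ii), I would argue directly from the structure. Each leaf $y_i$ attached to a vertex $c_i$ of the clique $K_m$ forces $c_i$ into any separating code (it is the unique neighbour of $y_i$ other than $y_i$ itself, and total domination of $y_i$ demands a neighbour in the code). Then, mimicking the counting in Proposition~\ref{prop:extremal-family-IDtn-1}, I would show that inside the $G''$ part almost every vertex is forced, with at most one vertex omittable, and that the leaves themselves cannot be freely removed because removing two leaves $y_i,y_j$ would leave their supports $c_i,c_j$ with identical $I$-sets (both being universal to the rest of $G'$). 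Assembling these forcing arguments yields $\IDt(G)\geq n-1$.

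The substance of the theorem is the ``only if'' direction: assuming $\IDt(G)\geq n-1$, I must show $G$ falls into (i) or (ii). The natural first step is to invoke Proposition~\ref{Prop:IDtn-1} to dispose of the $\IDt(G)=n$ case (forcing $G=P_3$), so I may assume $\IDt(G)=n-1$. The key dichotomy is whether $G$ has support vertices. If $G$ is leaf-free, I would argue that the gap between $\IDt$ and $\ID$ collapses: the proof of Proposition~\ref{Prop:IDtn-1} shows that for a leaf-free graph one can always remove some vertex $x$ to get a total dominating identifying code, and more carefully tracking that argument should show $\IDt(G)=\ID(G)$ when there are no support vertices, landing us in case (i) via Theorem~\ref{The:IDn-1}. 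The real work is the case where $G$ has at least one leaf. Here I would let $L$ be the set of leaves and $S$ the supports, and analyze how leaves and supports interact with the near-perfect code of size $n-1$.

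The main obstacle, and where I expect to spend most of the effort, is showing that when leaves are present the graph must have exactly the ``private leaf on each clique vertex'' structure of (ii). I would proceed by establishing several structural claims in sequence: first, that each support vertex has exactly one leaf (a support with two leaves would create closed twins among the leaves, contradicting identifiability); second, that every support vertex must be universal to the non-leaf part of $G$ and that the supports form a clique $K_m$ (this is where I would derive a contradiction from any ``missing edge'' by exhibiting two vertices separated by too few codewords when we are only allowed to omit one vertex); and third, that the non-support, non-leaf vertices induce a graph $G''$ which must itself be extremal, i.e. lie in $\mathcal{A}\cup(\mathcal{A}\bowtie K_1)$. For this last claim I would show that a minimum total dominating identifying code of $G$ restricts to a near-optimal separating code of $G''$, forcing $\ID(G'')\geq |V(G'')|-1$ and invoking Theorem~\ref{The:IDn-1} (allowing the $A_0$ and $A_1$ cases that distinguish $\mathcal{A}$ from $\mathcal{A}^*$, since the attached leaves supply the extra separation those small pieces lack). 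The delicate point throughout is the bookkeeping of exactly which single vertex may be spared from the code, since the ``$n-1$'' budget permits precisely one omission and every structural claim amounts to showing that any other configuration would either force two omissions or fail to separate some pair; I expect the universality-of-supports step to be the hardest, as it requires simultaneously controlling separation between leaves, between supports, and between supports and interior vertices.
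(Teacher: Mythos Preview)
Your overall architecture differs from the paper's: you attempt a \emph{direct} structural analysis of the leaf case, whereas the paper proceeds by \emph{induction} on $n$, removing a leaf--support pair $(u,v)$ and invoking a dedicated lemma (Lemma~\ref{Lem:deletion}) which shows that $G-u-v$ is again connected, identifiable, and has $\IDt\geq n-3$. The inductive hypothesis then gives the structure of $G-u-v$, and a separate lemma (Lemma~\ref{Lem:ExtremalIDt from ID}) together with case analysis forces $v$ to be universal in the non-leaf part. Your direct route is a legitimate alternative in principle, and the ``if'' direction and leaf-free reduction are fine, but two genuine gaps appear in the leaf case.

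First, your claim that ``a support with two leaves would create closed twins among the leaves, contradicting identifiability'' is false: two leaves sharing a support are \emph{open} twins, not closed twins, and such graphs are perfectly identifiable---stars $K_{1,t}$ are the obvious witness. This also breaks your implicit dichotomy ``leaf-free $\Rightarrow$ (i), leaves present $\Rightarrow$ (ii)'': stars $K_{1,t}$ with $t\geq 4$ have leaves, belong to (i), and do \emph{not} fit the description (ii). So you must carve out stars before entering the structural analysis, and then you still need an actual argument that a non-star extremal graph cannot have a support with two or more leaves. In the paper this fact is not a one-liner; it emerges from the detailed proof of Lemma~\ref{Lem:deletion}(2), which handles multiple-leaf supports by explicitly constructing codes of size $n-2$. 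Second, even granting one leaf per support, your plan to show universality of supports and then that $G''\in\mathcal{A}\cup(\mathcal{A}\bowtie K_1)$ is only a sketch: the paper spends Lemma~\ref{Lem:ExtremalIDt from ID} and several pages of case analysis in the main proof precisely on forcing universality of the re-attached support, and that analysis really does require knowing the structure of the smaller graph (via induction or an equivalent). Your proposal to deduce $\ID(G'')\geq |V(G'')|-1$ by restricting a minimum code of $G$ needs care, since the supports separate nothing inside $G''$, but you must also verify $G''$ is connected or handle its components, and Theorem~\ref{The:IDn-1} is stated for connected graphs.
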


Next, we prove that the family of graphs described in Theorem~\ref{The:IDtn-1Char}(ii) (whose members have an identifying code of size less than $n-1$), indeed is extremal for total dominating identifying codes.

\begin{proposition}\label{prop:tID-new-extremal}
If $G'=G''\bowtie K_m$, where $m\geq1$ and $G''\in \mathcal{A}\cup(\mathcal{A}\bowtie K_1)$, and the graph $G$ of order $n\geq 3$ is obtained from $G'$ by attaching a leaf to each vertex in the clique $K_m$, then $\IDt(G)\geq n-1$.
\end{proposition}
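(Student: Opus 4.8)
The plan is to fix an arbitrary total dominating identifying code $C$ of $G$ and show that at most one vertex of $G$ lies outside $C$, which gives $|C|\ge n-1$ and hence $\IDt(G)\ge n-1$. Write $c_1,\dots,c_m$ for the vertices of the clique $K_m$, $\ell_1,\dots,\ell_m$ for the attached leaves (with $\ell_i$ adjacent to $c_i$), and $V''=V(G'')$ with $n''=|V''|$, so that $n=n''+2m$. First I would record the three closed neighbourhoods we need: $N_G[\ell_i]=\{\ell_i,c_i\}$; $N_G[c_i]=\{c_1,\dots,c_m\}\cup V''\cup\{\ell_i\}$; and, for $u\in V''$, $N_G[u]=N_{G''}[u]\cup\{c_1,\dots,c_m\}$ (leaves are never adjacent to $V''$-vertices, while the $c_i$'s are joined to all of $V''$).

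Then I would establish three reductions. First, each leaf $\ell_i$ has $c_i$ as its only neighbour, so total domination forces $c_1,\dots,c_m\in C$; no support vertex is ever omitted. Second, since $N_G[c_i]\triangle N_G[c_j]=\{\ell_i,\ell_j\}$ for $i\ne j$, the only vertices separating $c_i$ from $c_j$ are $\ell_i$ and $\ell_j$, so at least one of them lies in $C$, and hence at most one leaf is absent. Third, for $u,v\in V''$ the two copies of $\{c_1,\dots,c_m\}$ cancel, giving $N_G[u]\triangle N_G[v]=N_{G''}[u]\triangle N_{G''}[v]\subseteq V''$; thus $C\cap V''$ must separate every pair of $V''$ inside $G''$, i.e.\ it is a separating code of $G''$. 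By Proposition~\ref{prop:extremal-family-IDtn-1} (and a direct check for the few small members $A_0,A_1,K_1,P_3$ of $\mathcal A\cup(\mathcal A\bowtie K_1)$), every separating code of $G''$ has size at least $n''-1$, so at most one vertex of $V''$ is absent.

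Combining these, if two or more vertices were outside $C$ they could be neither two leaves (by the second reduction) nor two $V''$-vertices (by the third), so they would be exactly one leaf $\ell_p$ and one vertex $u\in V''$, and $C\cap V''=V''\setminus\{u\}$ would be a \emph{minimum} separating code of $G''$. The crux, which I expect to be the main obstacle, is the structural claim: whenever $V''\setminus\{u\}$ is a minimum separating code of $G''$, there is $v\in V''$ with $V''\setminus\{u\}\subseteq N_{G''}[v]$. I would deduce this from the description of minimum separating codes in the proof of Proposition~\ref{prop:extremal-family-IDtn-1}. If $G''\in\mathcal A$, then $u$ must be an end vertex, say $x^j_{2i_j}$, of some copy $A^j=A_{i_j}$; the ``middle'' vertex $v=x^j_{i_j}$ is adjacent within $A_{i_j}$ to every vertex except $x^j_{2i_j}$ (immediate from the rule $x_a\sim x_b\iff|a-b|\le i_j-1$) and to all other copies, so $N_{G''}[v]\supseteq V''\setminus\{u\}$ (the symmetric choice $u=x^j_1$ is handled by the mirror vertex, and the small case $A_1$ fits the same pattern). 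If instead $G''\in\mathcal A\bowtie K_1$, then $u$ must be the universal vertex and $v=u$ works; this covers the small cases $K_1$ and $P_3$ as well.

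Finally I would close with this $v$. If $v\ne u$, then $v\in C$ and a direct computation gives $I(c_p)=\{c_1,\dots,c_m\}\cup(V''\setminus\{u\})=I(v)$, so $c_p$ and $v$ are not separated. If $v=u$, then $u$ is universal in $G''$, whence $N_G[c_p]\triangle N_G[u]=\{\ell_p\}$ with $\ell_p\notin C$, so $c_p$ and $u$ are not separated. Either way $C$ fails to be separating, a contradiction; hence at most one vertex is absent and $\IDt(G)\ge n-1$. The degenerate cases cause no trouble: when $n''=0$ only leaves may be omitted, and at most one is by the second reduction, while the final computations use only $m\ge1$.
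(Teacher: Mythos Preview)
Your proof is correct and follows essentially the same route as the paper's: both force all clique vertices into $C$ by total domination, bound the missing leaves and the missing $V''$-vertices separately (the latter via the separating-code property inside $G''$), and then rule out the remaining configuration ``one leaf $\ell_p$ plus one vertex $u\in V''$ missing'' by exhibiting a vertex of $G''$ whose $I$-set coincides with $I(c_p)$. Your explicit structural claim---that the omitted $u$ always has a companion $v\in V''$ with $V''\setminus\{u\}\subseteq N_{G''}[v]$---is exactly what the paper uses implicitly when it picks the vertex $x^j_{i_j}$ (respectively the universal vertex) to collide with the clique vertex; your write-up just isolates this step more cleanly and handles the small cases $A_0,A_1,K_1,P_3$ explicitly.
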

\begin{proof}
Let $G$ be obtained from $G'$ and $G''$ as described in the statement (note that possibly, $G''$ is the empty graph, the graph of order~1, or the edgeless graph of order~2).

Let $C$ be an optimal total dominating identifying code in $G$. Observe that for it to be total dominating, every vertex in $K_m$ has to be in $C$. Moreover, to separate vertices in the clique $K_m$, at least $m-1$ of the leaves must be in the code. Furthermore, none of the vertices in the clique $K_m$ separate vertices in $G''$ from each other. Thus, $C\cap V(G'')$ must be a separating code of $G''$, but in any separating code of $G''$, there is at most one non-codeword by Proposition~\ref{prop:extremal-family-IDtn-1}. 
Thus, we have $|C|\geq n-2$, and the two vertices not yet fixed to be in $C$ are a leaf and a vertex of $G''$ that can be omitted from a separating code of $G''$.

Assume now that $|C|=n-2$, $w$ is the non-codeword in $G''$ and $u$, which is adjacent to $v$, is the non-codeword leaf in $G$. Observe that if $G''\in \mathcal{A}\bowtie K_1$, then $w$ is the universal vertex in $G''$ and hence, $I(w)=I(v)$, a contradiction. Moreover, if $G''\in \mathcal{A}\setminus\{A_0,A_1\}$, then the non-codeword corresponds to a vertex of type $x_1$ or $x_{2i_j}$ in some subgraph $A_{i_j}$, say, $x_1$. However, now $I(x_{2i_j-1})=I(v)$, again a contradiction. Therefore, $C$ has cardinality $n-1$. By the same arguments, when $|V(G'')|\leq 2$, we also have $\IDt(G)\geq n-1$.
\end{proof}

Some example graphs $G$ of order $n$ for which $\IDt(G)=n-1$ but $\ID(G)<n-1$ are depicted in Figure~\ref{fig:extremal-examples-tID}.

\begin{figure}[ht]
\begin{center}

  \subfigure[The graph obtained from $K_m$ ($m\geq 2$) by attaching a leaf to each vertex.]{
    \scalebox{1}{\begin{tikzpicture}[join=bevel,inner sep=0.5mm]

        \node (x1) at (0,0) [draw, circle, fill=black] {};
        \node (x2) at (1,0) [draw, circle, fill=black] {};
        \node (xx) at (2,0)  {$\ldots$};        
        \node (x3) at (3,0) [draw, circle, fill=black] {};
        \node (Km) at (4.5,0) {$m$-clique};
        \node (l1) at (0,-1) [draw, circle] {};
        \node (l2) at (1,-1) [draw, circle, fill=black] {};
        \node (lx) at (2,-1)  {$\ldots$};        
        \node (l3) at (3,-1) [draw, circle, fill=black] {};
        
        \foreach \i in {1,2,3}\draw [-] (x\i)--(l\i);
        
        \node[ellipse, line width=1pt, draw = black, minimum width=4cm, minimum height=1cm] (ellipseKm) at (1.5,0) {};
        
  \end{tikzpicture}}
  }\qquad
\subfigure[The graph obtained from $K_m$ ($m\geq 2$) by joining it to $K_1=A_0\bowtie K_1$ and attaching a leaf to each vertex of $K_m$. For $m=2$ we obtain the bull graph.]{
    
    \scalebox{1}{\begin{tikzpicture}[join=bevel,inner sep=0.5mm]

        \node (u) at (1.5,1) [draw, circle] {};

        \node (x1) at (0,0) [draw, circle, fill=black] {};
        \node (x2) at (1,0) [draw, circle, fill=black] {};
        \node (xx) at (2,0)  {$\ldots$};        
        \node (x3) at (3,0) [draw, circle, fill=black] {};
        \node (Km) at (4.5,0) {$m$-clique};
        \node (l1) at (0,-1) [draw, circle, fill=black] {};
        \node (l2) at (1,-1) [draw, circle, fill=black] {};
        \node (lx) at (2,-1)  {$\ldots$};        
        \node (l3) at (3,-1) [draw, circle, fill=black] {};
        
        \foreach \i in {1,2,3}\draw [-] (u)--(x\i)--(l\i);
        
        \node[ellipse, line width=1pt, draw = black, minimum width=4cm, minimum height=1cm] (ellipseKm) at (1.5,0) {};
        
  \end{tikzpicture}}
  }\newline
\subfigure[The graph obtained from $K_m$ ($m\geq 1$) by attaching a leaf to each vertex and joining it to $A_1$ (here $m=2$).]{  
    \scalebox{1}{\begin{tikzpicture}[join=bevel,inner sep=0.5mm]

        \node (u1) at (0,2) [draw, circle, fill=black] {};
        \node (u2) at (1,2) [draw, circle, fill=black] {};
        \node (x1) at (0,0) [draw, circle, fill=black] {};
        \node (l1) at (0,-1) [draw, circle] {};
        \node (x2) at (1,0) [draw, circle, fill=black] {};
        \node (l2) at (1,-1) [draw, circle, fill=black] {};
        \node (Km) at (2.5,0) {$2$-clique};
        \draw [-] (u1)--(x1)--(x2)--(l2) (x1)--(l1) (x2)--(u1)--(x1)--(u2)--(x2);
        
        \node[rectangle, rounded corners, line width=1pt, draw = black, dashed, minimum width=2cm, minimum height=1cm] (P3) at (0.5,2.2) {$A_1$};
        \node[ellipse, line width=1pt, draw = black, minimum width=2cm, minimum height=1cm] (ellipseKm) at (0.5,0) {};
    
  \end{tikzpicture}}
  }\qquad
\subfigure[The graph obtained from $K_m$ ($m\geq 1$) by attaching a leaf to each vertex and joining it to $A_1\bowtie K_1=P_3$ (here $m=1$).]{
    \scalebox{1}{\begin{tikzpicture}[join=bevel,inner sep=0.5mm]

        \node (u1) at (0,2) [draw, circle, fill=black] {};
        \node (u2) at (1,2) [draw, circle] {};
        \node (u3) at (2,2) [draw, circle, fill=black] {};        
        \node (x1) at (1,0) [draw, circle, fill=black] {};
        \node (l1) at (1,-1) [draw, circle, fill=black] {};
        \node (Km) at (2.5,0) {$1$-clique};
        
        \draw [-] (u1)--(u2)--(u3)--(x1)--(u2) (u1)--(x1)--(l1);
        
        \node[rectangle, rounded corners, line width=1pt, draw = black, dashed, minimum width=3cm, minimum height=1.2cm] (P3) at (1,2.4) {$A_1\bowtie K_1$};
        \node[ellipse, line width=1pt, draw = black, minimum width=1.5cm, minimum height=1cm] (ellipseKm) at (1,0) {};
        
  \end{tikzpicture}}
  }\qquad
\subfigure[The graph obtained from $K_m$ ($m\geq 1$) by attaching a leaf to each vertex and joining it to $A_2=P_4$.]{
    \scalebox{1}{\begin{tikzpicture}[join=bevel,inner sep=0.5mm]

        \node (u1) at (0,2) [draw, circle, fill=black] {};
        \node (u2) at (1,2) [draw, circle, fill=black] {};
        \node (u3) at (2,2) [draw, circle, fill=black] {};        
        \node (u4) at (3,2) [draw, circle, fill=black] {};
        
        \node (x1) at (0,0) [draw, circle, fill=black] {};
        \node (x2) at (1,0) [draw, circle, fill=black] {};
        \node (xx) at (2,0)  {$\ldots$};        
        \node (x3) at (3,0) [draw, circle, fill=black] {};
        \node (Km) at (4.5,0) {$m$-clique};
        \node (l1) at (0,-1) [draw, circle] {};
        \node (l2) at (1,-1) [draw, circle, fill=black] {};
        \node (lx) at (2,-1)  {$\ldots$};        
        \node (l3) at (3,-1) [draw, circle, fill=black] {};
        
        \foreach \i in {1,2,3}\draw [-] (x\i)--(l\i) (x\i)--(u1) (x\i)--(u2) (x\i)--(u3) (x\i)--(u4);
        
        \node[ellipse, line width=1pt, draw = black, minimum width=4cm, minimum height=1cm] (ellipseKm) at (1.5,0) {};
        
        \draw [-] (u1)--(u2)--(u3)--(u4);
        
        \node[rectangle, rounded corners, line width=1pt, draw = black, dashed, minimum width=4cm, minimum height=1.2cm] (A2) at (1.5,2.4) {$A_2$};

  \end{tikzpicture}}
  }

\end{center}
  \caption{Some examples of graphs of order $n$ with total dominating identifying code number $n-1$ but a smaller identifying code. Black vertices form a minimum total dominating identifying code.}
\label{fig:extremal-examples-tID}
\end{figure}
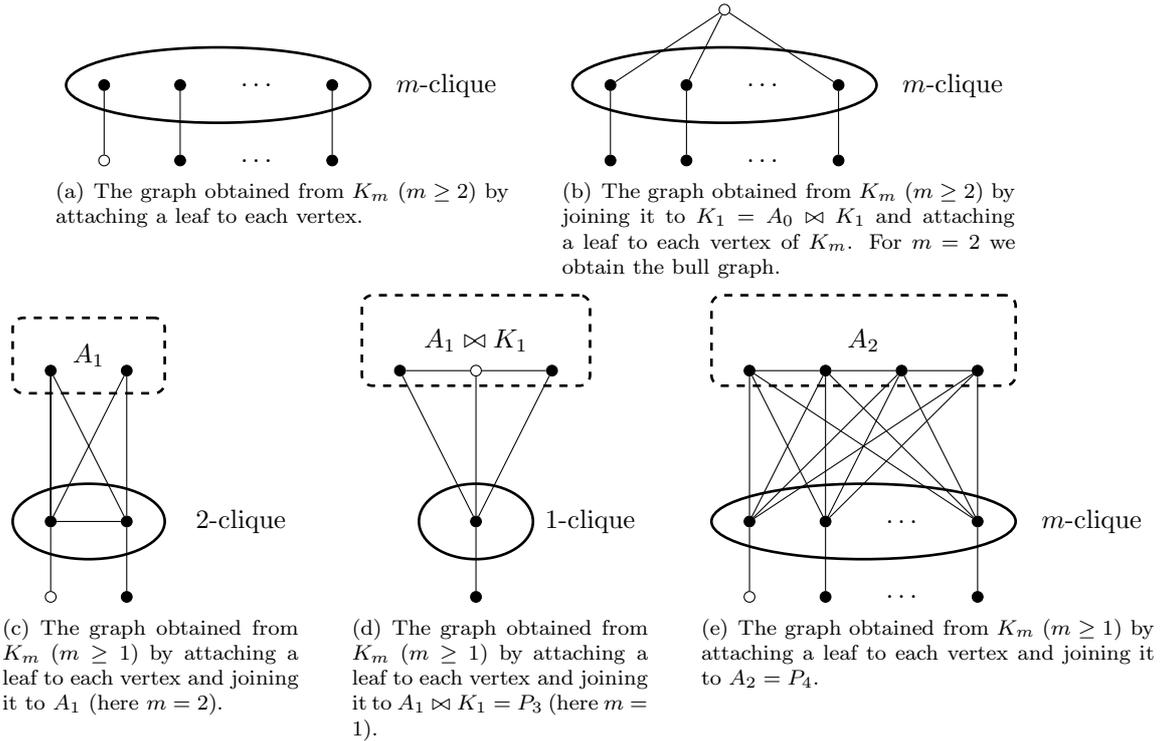

\subsection{The proof}

In the following lemma, we show that the extremal graphs are exactly the same for identification and total dominating identification, when the graphs do not contain any leaves.

\begin{lemma}\label{Lem:n-1minDegree}
If $G$ be a connected identifiable graph with minimum degree $\delta(G)\geq2$ on $n\geq 4$ vertices, then $\IDt(G)=n-1$ if and only if $\ID(G)=n-1$.
\end{lemma}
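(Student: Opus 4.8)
### Proof proposal

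\textbf{The plan} is to prove the two directions separately. The direction $\ID(G)=n-1 \Rightarrow \IDt(G)=n-1$ is essentially immediate: since any total dominating identifying code is in particular an identifying code, we always have $\IDt(G)\geq \ID(G)$, so $\ID(G)=n-1$ forces $\IDt(G)\geq n-1$; combined with Proposition~\ref{Prop:IDtn-1} (which gives $\IDt(G)\leq n-1$ whenever $G\neq P_3$, and $P_3$ has a leaf so is excluded by $\delta(G)\geq 2$), this yields $\IDt(G)=n-1$. So the real content is the converse direction, and this is where I expect the main work to lie. Since $\ID(G)\leq \IDt(G)=n-1$ always holds (as $V(G)$ minus a well-chosen vertex is an identifying code, by the result of~\cite{GM07} cited in Proposition~\ref{Prop:IDtn-1}), the converse amounts to ruling out $\ID(G)\leq n-2$, i.e.\ showing that $\ID(G)\leq n-2$ would force $\IDt(G)\leq n-2$ as well, contradicting $\IDt(G)=n-1$.

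\textbf{The key idea} I would pursue is that under the hypothesis $\delta(G)\geq 2$, every identifying code can be \emph{cheaply upgraded} to a total dominating one without increasing its size. Concretely, suppose $C$ is an identifying code with $|C|=\ID(G)\leq n-2$. The only way $C$ fails to be total dominating is if some codeword or vertex has no neighbour in $C$; the set $U$ of vertices not totally dominated by $C$ consists of vertices $v$ with $N(v)\cap C=\emptyset$. Because $\delta(G)\geq 2$, each such $v$ has at least two neighbours available outside $C$ to recruit, and I would argue we can add vertices to repair total domination while simultaneously removing redundant codewords, keeping the size at most $n-2$. The first step is thus to analyze the structure of $U$: since $C$ is identifying (hence dominating), every $v\in U$ must lie in $C$ itself (otherwise $I(v)=\emptyset$), so $U\subseteq C$ and consists of isolated-within-$C$ codewords, each having all its $\geq 2$ neighbours outside $C$.

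\textbf{The main obstacle}, I expect, is the bookkeeping that shows we can add one neighbour for each troubled vertex in $U$ while deleting at least as many codewords, all without destroying the separating property. The clean route is to avoid explicit repair and instead argue contrapositively using the characterization: I would first show that if $\IDt(G)=n-1$ then, by Proposition~\ref{prop:extremal-family-IDtn-1} and the structure of optimal codes, $G$ cannot admit an identifying code of size $\leq n-2$, because any such code, being dominating with at least two uncovered slots, could be completed to a total dominating identifying code of size $\leq n-2$ using the minimum-degree-2 condition to supply the needed neighbours without introducing unseparated pairs. Thus $\ID(G)\geq n-1$, and combined with $\ID(G)\leq \IDt(G)=n-1$ we get $\ID(G)=n-1$. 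The delicate point is guaranteeing that the added neighbour does not create a twin collision; here I would exploit that the vertices being repaired lie in $C$ and that $G$ is twin-free (identifiable plus $\delta\geq 2$ rules out leaf-based obstructions), so that adding a neighbour of $v$ can only refine, never coarsen, the induced $I$-sets. I would keep the argument local: handle each $v\in U$ independently, charging the added neighbour against a codeword that becomes removable once $v$ is totally dominated.
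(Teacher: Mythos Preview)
Your treatment of the direction $\ID(G)=n-1\Rightarrow\IDt(G)=n-1$ is fine. The gap is in the converse.

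First, a minor but clarifying point: you take $C$ of size $\ID(G)$, but it is cleaner (and what the paper does) to fix an identifying code $C'$ of size exactly $n-2$. Then there are precisely two non-codewords, say $u,u'$, and any $v\in U$ (i.e.\ any codeword with $N(v)\cap C'=\emptyset$) must have degree exactly~$2$ with $N(v)=\{u,u'\}$. This already pins down the obstruction far more tightly than your general setup does.

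The real problem is your repair step. You propose to add a neighbour of each $v\in U$ and delete an equal number of ``removable'' codewords, justifying this with the observation that adding codewords only refines $I$-sets. But adding codewords is not where the danger lies---\emph{removing} them is. You give no argument for why a removable codeword should exist after the addition, nor how to identify one while preserving all separations. The phrase ``a codeword that becomes removable once $v$ is totally dominated'' is precisely the content of the lemma, and you leave it as an assertion. The remark that $G$ being identifiable with $\delta\geq 2$ ``rules out leaf-based obstructions'' does not help: the obstruction to removing a codeword is that it may be the unique separator of some pair, and this has nothing to do with leaves.

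The paper's argument is a concrete \emph{shift} rather than an add-then-remove. With $v\in U$ and $N(v)=\{u,u'\}$ as above, it tries replacing $v$ by $u$ (respectively $u'$) in $C'$. The resulting set has size $n-2$ and is total dominating; if it fails to be identifying, that failure produces a vertex $w$ with $N[u]=N[w]\cup\{v\}$ (up to symmetry and up to $u'$). One then shifts $w$ to $u$ instead, and a short case analysis---on whether $u$ and $u'$ are adjacent, and on the analogous vertex $w'$ arising from the shift to $u'$---closes the argument. Each failed shift encodes a neighbourhood constraint, and one chases these constraints to a contradiction. Your proposal does not engage with this mechanism.
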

\begin{proof}
Let $G$ be a connected graph with minimum degree $\delta(G)\geq2$ on $n\geq 4$ vertices with $\IDt(G)=n-1$. Assume by contradiction that $\ID(G)\leq n-2$. We may assume that $C'$ is an identifying code of cardinality $n-2$ in $G$. We notice that $C'$ cannot be total dominating since $\IDt(G)=n-1$. Thus, there exists a vertex $v\in C'$ such that $\deg(v)=2$ (since $G$ has no degree 1 vertex) and there are two adjacent non-codewords $u$ and $u'$ and these two vertices are the only non-codewords in $G$. Since $\IDt(G)=n-1$, we cannot shift codeword $v$ to any of $u$ or $u'$ and obtain a total dominating identifying code. Since $v$ was not helpful with total domination in $C'$,  there exists a vertex $w$ such that $N[u]\cup\{u'\}=N[w]\cup\{v,u'\}$ or $N[u']\cup\{u\}=N[w]\cup\{v,u\}$. Without loss of generality we assume the first case.

Assume first that $u$ and $u'$ are not adjacent. In this case, we may shift the codeword in $w$ to $u$. Notice that the resulting code is total dominating. Moreover, since $u$ and $u'$ are not adjacent, $u$ and $w$ separate exactly the same set of vertex pairs with the exception of those with $v$ (and possibly $u'$ if $w$ is adjacent to $u'$) in them. Moreover, $I(v)$ is unique since $I(v)=\{u,v\}$, and $u'$ is the only vertex with $v$ in its $I$-set while not having $u$ in its $I$-set. Hence, the resulting code is total dominating identifying. Moreover, it is a total dominating identifying code with cardinality of $n-2$, a contradiction. Hence, we may assume from now on that $u$ and $u'$ are adjacent. If we again do the same shift of codewords, then we notice that we have a total dominating identifying code unless $w$ was the vertex which separated $u$ and $u'$. That is, $N[u]=N[u']\cup\{w\}$.

Recall that we could not shift the codeword in $v$ to $u'$ and get an identifying code. Since $u'$ separates $w$ and $u$, we have a vertex $w'\neq w$ with $N[u]=N[w']\cup \{v\}$ or $N[u']=N[w']\cup \{v\}$. If $N[u]=N[w']\cup \{v\}$, then $N[w]\cup\{v,u'\}=N[w']\cup\{v\}$. Hence, $u'$ is the only vertex which can separate $w$ and $w'$ and $u'$ has to be a codeword in any identifying code, a contradiction since $C'$ did not contain it. Hence, we may assume that $N[u']=N[w']\cup \{v\}$. However, now we may shift the codeword from $w'$ to $u'$ and get a total dominating identifying code. Hence, we have $\IDt(G)=n-2$, a contradiction, and we have $\ID(G)=n-1$.

The other direction is clear. If $\ID(G)=n-1$, then $\IDt(G)=n-1$ since $G$ is not $P_3$.\end{proof}

To exactly characterize the extremal graphs for total dominating identification, we require some lemmas which will later be utilized in the induction.

\begin{lemma}\label{Lem:deletion}
Let $G$ be a connected graph of order $n\geq 5$ other than a star, with a leaf $u$ and an adjacent support vertex $v$. If $\IDt(G)= n-1$, then $\IDt(G-u-v)\geq n-3$ and $G-u-v$ is identifiable and connected.
\end{lemma}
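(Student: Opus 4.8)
The plan is to assume $\IDt(G)=n-1$ throughout and prove the three conclusions --- identifiability, connectivity, and $\IDt(G-u-v)\ge n-3$ --- each by contradiction: whenever a conclusion fails I will exhibit a total dominating identifying code of $G$ of size at most $n-2$, contradicting extremality. I first record the facts used repeatedly. Since $u$ is a leaf, every total dominating set of $G$ contains the support $v$; and since $G$ is connected of order $n\ge 5$ and not a star, $v$ has degree at least $2$, hence a neighbour $z\neq u$, which lies in $G'=G-u-v$. Logically, identifiability and connectivity of $G'$ should be settled first, as they are what makes $\IDt(G')$ meaningful in the bound; but I describe the (clean) bound argument first, since it isolates the mechanics.

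For the bound $\IDt(G')\ge n-3$ (recall $|V(G')|=n-2$), suppose instead that $G'$ has a total dominating identifying code $C'$ with $|C'|\le n-4$, and extend it to $G$ by two vertices: if $v$ has a neighbour in $C'$, put $C=C'\cup\{u,v\}$; otherwise choose $z\in N(v)\cap V(G')$ (necessarily $z\notin C'$) and put $C=C'\cup\{v,z\}$. Then $|C|\le n-2$, and $C$ is total dominating, since $u$ is dominated by $v$, the vertex $v$ by its added neighbour, and every vertex of $G'$ retains the $C'$-neighbour it had in $G'$. For separation, note first that $C\supseteq C'$ separates all pairs inside $V(G')$ (enlarging a code never merges $I$-sets), and the added vertex $v\notin V(G')$ cannot spoil this. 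By total domination of $G'$, every $w\in V(G')$ has a codeword neighbour $c\in C'$ with $c\notin\{u,v\}$ (and $c\neq z$ in the second case); since $I(u)\subseteq\{u,v\}$ while $c\in I(w)$, this separates $u$ from every such $w$, and in the second case, where $I(v)=\{v,z\}$, it also separates $v$ from every such $w$. In the first case $v$ is separated from each $w\in V(G')$ by $u\in I(v)\setminus I(w)$, and from $u$ by its $C'$-neighbour. Thus $C$ is a total dominating identifying code of size at most $n-2$, the desired contradiction.

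The delicate part is showing $G'$ is connected and identifiable, i.e.\ that it has no isolated vertex and no pair of closed twins. The strategy is again to turn each forbidden configuration into a pair of vertices whose deletion from $V(G)$ leaves a total dominating identifying code of size $n-2$. The forced local pictures are: an isolated vertex of $G'$ is necessarily a \emph{second} leaf $w$ of $v$ (so $u,w$ are open twins); closed twins $a,b$ of $G'$ must, since $G$ itself is identifiable, have exactly one of them --- say $a$ --- adjacent to $v$, so that $N[a]=N[b]\cup\{v\}$ and $v$ is their \emph{unique} separator in $G$; and a disconnection of $G'$ forces $v$ to be a cut vertex joined to every component. In each case I would delete two well-chosen vertices --- for instance $u$ together with $b$, or the two twins $a,b$, the choice depending on whether $b$ is itself a leaf --- and verify that total domination and separation both survive.

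I expect this last verification to be the main obstacle. Deleting two vertices can destroy the separation of any pair for which one of them was the unique separator, so one must argue, using the leaf $u$ and the surviving twin $a$ as alternative separators, that no such pair remains. Concretely, for pairs avoiding $v$ the twins $a$ and $b$ separate exactly the same pairs, so removing $b$ is harmless there; the danger is a pair $(v,y)$ with $y\sim b$, which $a$ cannot separate (both $v,y\in N[a]$) --- here one falls back on $u$, which separates $v$ from every non-neighbour, provided $u$ is still available. Balancing these alternatives across the sub-cases (which of $u,a,b$ remain, and which pairs each uniquely separates) is exactly where the extremality hypothesis $\IDt(G)=n-1$ is genuinely exploited, and it is the part that will require the most careful case analysis.
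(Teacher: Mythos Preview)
Your argument for $\IDt(G-u-v)\ge n-3$ is correct and is essentially the paper's own proof of that part: extend a hypothetical small code $C'$ of $G'$ by $\{u,v\}$ or by $\{v,z\}$ according to whether $v$ already has a $C'$-neighbour, and check separation using a $C'$-neighbour of each $w\in V(G')$ that avoids $\{u,v,z\}$.

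The remaining two conclusions are where the real work lies, and your outline both underestimates them and misses the key tool. First, a slip: ``connected and identifiable'' is not the same as ``no isolated vertex and no closed twins''; you must also rule out several components of size $\ge 2$. More importantly, your strategy of exhibiting $V(G)\setminus\{p,q\}$ as a code does not go through cleanly for a \emph{single} pair of closed twins $a,b$ of $G'$ (say $a\sim v$, $b\not\sim v$). With $C=V(G)\setminus\{a,b\}$, any pair $x,y\in V(G')\setminus\{a,b\}$ with $N[x]\triangle N[y]=\{a,b\}$ is unseparated, and such pairs are not excluded. With $C=V(G)\setminus\{u,b\}$, the pair $(v,a)$ is unseparated whenever $N_G[v]\triangle N_G[a]=\{u,b\}$. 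The paper avoids this by a different device you do not mention: it deletes \emph{three} vertices, passing to $G''=G-\{u,v,a\}$, shows $G''$ is still identifiable, invokes Proposition~\ref{Prop:IDtn-1} to obtain $\IDt(G'')\le n-4$, and then extends an optimal code $C''$ of $G''$ by two of $\{v,a,b\}$; a short comparison of the two candidates $C''\cup\{v,a\}$ and $C''\cup\{v,b\}$ finishes. The connectivity proof is likewise heavier than your sketch: after ruling out size-$2$ components directly, the paper applies Proposition~\ref{Prop:IDtn-1} component-by-component (to each component, or to the component together with $v$ when it is not identifiable alone) and glues the codes; the sub-case where $v$ carries several pendant leaves needs yet another pass. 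The recurrent idea you are missing is precisely this use of Proposition~\ref{Prop:IDtn-1} on carefully chosen connected identifiable subgraphs, rather than working entirely inside $G$.
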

\begin{proof}
Let $G$ be a connected graph other than a star with $\IDt(G)=n-1\geq4$ with leaf $u$ and adjacent support vertex $v$. We denote graph $G-u-v$ by $G_v$. We prove the following facts.

\medskip

\noindent\textbf{(1) $G_v$ has no components of size~2.} 
Suppose on the contrary that such a component exists in $G_v$, say, with $x,y$ as its vertices and $x\in N_G(v)$ (thus $y\notin N_G(v)$ since $G$ is identifiable). Now, $V(G)\setminus\{y,u\}$ is a total dominating identifying code of cardinality $n-2$ in $G$, a contradiction. Indeed, the code is clearly total dominating and $y$ is the only vertex with $I(y)=\{x\}$, $x$ is the only one with $I(x)=\{v,x\}$ since $n\geq5$, $u$ the only one with $I(u)=\{v\}$, $v$ is the only other vertex which is adjacent to $x$ and hence is separated from the rest of codewords. Finally, the other vertices will have unique $I$-sets since $G$ is identifiable.

\medskip

\noindent\textbf{(2) $G_v$ is connected}. Suppose on the contrary that we have several components in $G_v$. By the above paragraph, none of them has size~2.

Consider the case where each component in $G_v$ has at least three vertices. Notice that if a component, say $A$, is not identifiable, then $G[A\cup\{v\}]$ is identifiable (and not isomorphic to $P_3$), by Proposition~\ref{Prop:IDtn-1} there is a total dominating identifying code of $G[A\cup\{v\}]$ of cardinality at most $|A|$, and it must contain vertex $v$. On the other hand, if a component of $G_v$ is identifiable, then again by Proposition~\ref{Prop:IDtn-1} it has a total dominating identifying code which does not contain each vertex in that component (unless the component is $P_3$, in which case that component together with $v$ has a total dominating identifying code of size~3, with a non-codeword other than $v$). If we now consider graph $G$, then, by combining the codes in each component (together with $u$ and $v$), we find a code which contains at most $n-2$ codewords. Resulting code is clearly total dominating and each vertex within the  components is separated by codewords within those components or by $v$. Moreover, $v$ and $u$ are separated from every other vertex by $u$. Hence, we have a total identifying code unless $I(u)=I(v)$. However, if $I(u)=I(v)$ holds, then we can just move the codeword from $u$ to any other vertex adjacent to $v$. Now $u$ is the only vertex adjacent to only $v$ and each other vertex adjacent to $v$ already had, before the codeword shift, another codeword which is adjacent to it. This leads to a contradiction.

Finally, to show that $G$ is connected, it remains to deal with the case where some component of $G_v$ is a single vertex, that is, $v$ has at least two adjacent leaves. By (1) we know that no component of $G_v$ has size~2. Let $U=\{u,u_1,\ldots,u_k\}$, 
for some $k\geq 1$, be the set of leaves adjacent to $v$ in $G$. Denote graph $G-U-v$ by $G''$. Similarly as above, we can check that $G''$ is not $P_2$ or $P_3$. Assume first that $G''$ is not identifiable. Then, $G-U$ is identifiable, has size at least~3 (because $G$ is not a star) and is not isomorphic to $P_3$ and hence, by Proposition~\ref{Prop:IDtn-1}, it has a total dominating identifying code $C'$ of size at most $n-|U|-1$. Moreover, $v\in C'$ since it is the only vertex which separates some pair of closed twins in $G''$. However, now $C'$ together with all but one vertex in $U$ is a total dominating identifying code of $G$ of cardinality at most $n-2$, a contradiction. Hence, we may assume that $G''$ is identifiable. Moreover, if $G''$ has a total dominating identifying code $C'$ of size at most $n-|U|-3$, then $C'\cup U\cup\{v\}$ is a total dominating identifying code of size at most $n-2$ in $G$, a contradiction. Thus, $\IDt(G'')= n-|U|-2$ since $G''$ is not isomorphic to $P_3$. 

Let $C'$ be a total dominating identifying code of $G''$ with $|C'|=n-|U|-2$. Assume first that $C'$ contains a neighbour of $v$ (this is true in particular if $\deg(v)\geq |U|+2$ in $G$). Then $C'\cup\{v\}$ together with all the vertices of $U$ but one is a total dominating identifying code of size $n-2$ in $G$ since $v$ has a codeword neighbour in $C'$, again a contradiction. Thus, we may assume that $\deg(v)=|U|+1$ in $G$ (we denote by $w$ the neighbour of $v$ not in $U$), and $w\not\in C'$. Thus, $w$ is not a support vertex. Then, $G''-w$ is identifiable. If $G''-w$ is $P_3$, then it is easy to check that $G$ has a total dominating identifying code of size at most $n-2$. Otherwise, if $G''-w$ is connected, then, by Proposition~\ref{Prop:IDtn-1}, it satisfies $\IDt(G''-w)\leq n-|U|-3$. If $G''-w$ is disconnected, then none of the components is a $P_2$ (otherwise $w$ would be a codeword in $C'$) and if a  component is a $P_3$, then each vertex in that $P_3$ is a codeword in $C'$. We can now just shift one of the codewords in the $P_3$ to $w$ in $C'$ and obtain a total dominating identifying code of $G''$. (The codeword that can be shifted depends on which edges exist between $w$ and the $P_3$-component. At the beginning of this paragraph we have shown that if $v$ has an adjacent codeword vertex in a total dominating identifying code of $G''$ of size at most $n-|U|-2$, then we have a total dominating identifying code of size $n-2$ in $G$, a contradiction. Hence, we may assume that we do not have any such $P_3$-components in $G''-w$. Thus, each component in $G''-w$ has at least four vertices and hence, by Proposition~\ref{Prop:IDtn-1}, $\IDt(G''-w)\leq n-|U|-3$. Now, in $G$, the code $C''\cup\{v,w\}$ together with all the vertices of $U$ but one, is total dominating identifying with cardinality at most $n-2$, a contradiction.


Hence we have proved that $G_v$ is connected.

\medskip

\noindent\textbf{(3) $G_v$ is identifiable.} By contradiction, assume $G_v$ has some closed twins. Assume first that $G_v$ has three mutually twin vertices $x,y$ and $z$ such that $N_{G_v}[x]=N_{G_v}[y]=N_{G_v}[z]$. Now, $v$ cannot separate all three of these vertices in $G$ and hence, we have a contradiction. 

Assume next that we have at least two disjoint pairs of closed twins, that is, $N_{G_v}[x]=N_{G_v}[y]$ and $N_{G_v}[z]=N_{G_v}[w]$. We may assume that $v\in N_G(x)$ and $v\in N_G(z)$ but $v\not\in N_G(y)$ and $v\not\in N_G(w)$. Now, $V(G)\setminus \{y,w\}$ is a total dominating identifying code in $G$. Clearly the code is total dominating. Moreover, $v$ separates $x$ and $y$ as well as $z$ and $w$. Furthermore, $u$ separates $v$ from other vertices and $x$ separates $v$ from $u$. Since $N_{G_v}[x]=N_{G_v}[y]$ and $N_{G_v}[z]=N_{G_v}[w]$, adding $w$ or $y$ to the code will not separate any new vertices and since $G$ is identifiable, this code is an identifying code.

Thus, we now assume that there is exactly one pair of closed twins in $G_v$, that is, $N_{G_v}[x]=N_{G_v}[y]$ where $v\in N_G[x]\setminus N_G[y]$. Notice that by (1), $G_v$ has no components of size~2, hence $x$ and $y$ have a common neighbour, $z$. Then, consider the graph $G''=G-\{u,v,x\}$. Notice that $G_v'$ is identifiable since $G$ is identifiable and if $N_{G''}[a]=N_{G''}[b]$ for some vertices $a,b$, then $u$ cannot separate them in $G$, $v$ cannot be the only one to separate them in $G$ since there is exactly one pair of closed twins in $G_v$, and $x$ cannot separate them either since $N_{G_v}[x]=N_{G_v}[y]$. Thus, $G''$ is identifiable, as claimed.

Notice that if $G''$ is isomorphic to $P_3$, then $G$ is one of four possible graphs (see Figure~\ref{fig:smallgraphs}), and in each case one can check that $\IDt(G)\leq 4=n-2$, a contradiction.

\begin{figure}[ht]
\begin{center}

  \subfigure[]{
    \scalebox{1}{\begin{tikzpicture}[join=bevel,inner sep=0.5mm]
        \node (u) at (0,0) [draw, circle, label={90:$u$}] {};
        \node (v) at (1,0) [draw, circle, label={90:$v$}, fill=black] {};
        \node (x) at (2,0) [draw, circle, label={90:$x$}, fill=black] {};
        \node (y) at (3,0) [draw, circle, label={90:$y$}] {};
        \node (z) at (2.5,-1) [draw, circle, label={-90:$z$}, fill=black] {};
        \node (t) at (1.5,-1) [draw, circle, fill=black] {};
        \draw [-] (u)--(v)--(x)--(y)--(z)--(x) (z)--(t);
  \end{tikzpicture}}
  }\qquad
      \subfigure[]{
    \scalebox{1}{\begin{tikzpicture}[join=bevel,inner sep=0.5mm]
        \node (u) at (0,0) [draw, circle, label={90:$u$}] {};
        \node (v) at (1,0) [draw, circle, label={90:$v$}, fill=black] {};
        \node (x) at (2,0) [draw, circle, label={90:$x$}, fill=black] {};
        \node (y) at (3,0) [draw, circle, label={90:$y$}, fill=black] {};
        \node (z) at (2.5,-1) [draw, circle, label={-90:$z$}] {};
        \node (t) at (1.5,-1) [draw, circle, fill=black] {};
        \draw [-] (u)--(v)--(x)--(y)--(z)--(x) (v)--(t)--(z);
  \end{tikzpicture}}
  }\qquad
    \subfigure[]{
    \scalebox{1}{\begin{tikzpicture}[join=bevel,inner sep=0.5mm]
        \node (u) at (0,0) [draw, circle, label={90:$u$}] {};
        \node (v) at (1,0) [draw, circle, label={90:$v$}, fill=black] {};
        \node (x) at (2,0) [draw, circle, label={90:$x$}] {};
        \node (y) at (3,0) [draw, circle, label={90:$y$}, fill=black] {};
        \node (z) at (2.5,-1) [draw, circle, label={-90:$z$}, fill=black] {};
        \node (t) at (1.5,-1) [draw, circle, fill=black] {};
        \draw [-] (u)--(v)--(x)--(y)--(z)--(x) (v)--(z)--(t);
  \end{tikzpicture}}
  }\qquad
    \subfigure[]{
    \scalebox{1}{\begin{tikzpicture}[join=bevel,inner sep=0.5mm]
        \node (u) at (0,0) [draw, circle, label={90:$u$}] {};
        \node (v) at (1,0) [draw, circle, label={90:$v$}, fill=black] {};
        \node (x) at (2,0) [draw, circle, label={90:$x$}, fill=black] {};
        \node (y) at (3,0) [draw, circle, label={90:$y$}, fill=black] {};
        \node (z) at (2.5,-1) [draw, circle, label={-90:$z$}] {};
        \node (t) at (1.5,-1) [draw, circle, fill=black] {};
        \draw [-] (u)--(v)--(x)--(y)--(z)--(x) (v)--(z)--(t)--(v);
  \end{tikzpicture}}
  }

\end{center}
  \caption{The possibilities for graph $G$ when $G''$ is isomorphic to $P_3$, in part (3) of the proof of Lemma~\ref{Lem:deletion}. The black vertices form total dominating sets of size $n-2$.}
\label{fig:smallgraphs}
\end{figure}
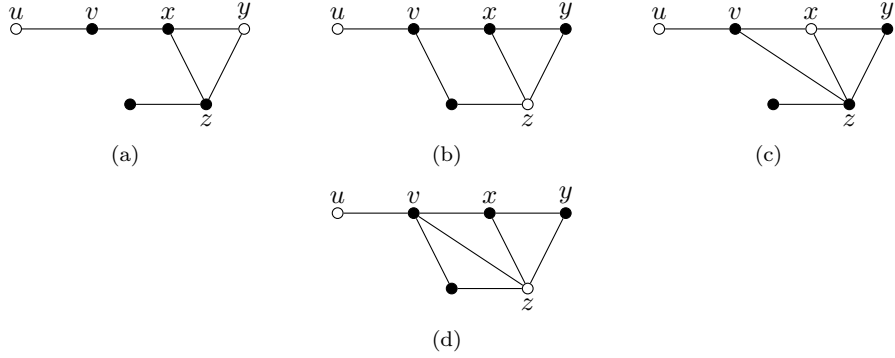

Hence, by Proposition~\ref{Prop:IDtn-1}, we have $\IDt(G'')\leq n-4$. Let $C''$ be an optimal total dominating identifying code of $G''$.

Observe first that $|C''|=n-4$. Indeed, if $|C''|\leq n-5$, then $C=C''\cup\{u,v,x\}$ is a total dominating identifying code of $G$ of cardinality at most $n-2$. Indeed, $C$ is clearly total dominating and $v$ is separated from all vertices except $u$ by $u$ and $u$ is separated from $v$ by $x$. Moreover, $u$ is the only vertex with $I(u)=\{u,v\}$. Furthermore, if $I(x)=I(a)$ for some vertex $a$, then $I(a)=I(y)\cup \{v\}$ and $a$ and $y$ are not separated in $G''$, a contradiction. Thus, $C$ is total dominating identifying with $n-2$ codewords since all other vertices are dominated and separated by $C'$. Hence $|C''|=n-4$.

Consider the two codes $C_x=C''\cup\{v,x\}$ and $C_y=C''\cup\{v,y\}$. $C_x$ is a total dominating set, and $C_y$ is also a total dominating set, except if $v$ has degree~2 in $G$. Observe that for both codes, $I(u)=\{v\}$ and is unique if $\deg(v)\geq3$. All vertex pairs in $G''$ are separated by the vertices in $C''$. Moreover, in both codes, if some vertex $b$ of $G''$ is not separated from $x$, then this means that $I(b)=I(y)\cup\{v\}$ but then $b$ and $y$ are not separated by $C''$ in $G''$, a contradiction. Thus, in both codes, $x$ is separated from all vertices of $G$, except possibly $v$. Hence, for each of the two codes, if $v$ is also separated from all other vertices and has a neighbour in the code, then that code is a total dominating identifying code of size at most $n-2$, a contradiction, and we are done.

Hence, we assume that neither $C_x$ nor $C_y$ are total dominating identifying codes. Since $C_x$ is total dominating, it is not identifying; hence, by the above discussion, there is some vertex $b$ with $I(v)=I(b)$ in $C_x$. Thus, $b$ is dominated by $v$ and $x$ (possibly, $b=x$): $b$ must be a neighbour of $y$. Then, $y\notin C''$, for otherwise, $b$ and $v$ would be separated by $y$ in $C_x$, a contradiction. Thus, we have $N[v]\triangle N[b]=\{u,y\}$. If $b=x$, then $C_y$ is a total dominating identifying code, indeed $x,y$ have at least one common neighbour in $C''$, which is a neighbour of $v$, so $C_y$ is total dominating. Moreover, all neighbours of $v$ except $u$ are neighbours of $y$, so $v$ is also separated from all other vertices either by $v$ or by $y$. Therefore, we have $b\neq x$. Hence, $\deg_G(v)\geq 3$ and $C_y$ is total dominating. Thus, $C_y$ is not identifying, that is, there is a vertex $c$ with $I(c)=I(v)$ in $C_y$. Hence, $c$ is not adjacent to $y$ (hence, not to $x$) and is adjacent to $b$, and $N[c]\triangle N[v]=\{u,x\}$. It follows that $N[b]\triangle N[c]=\{x,y\}$, however that is a contradiction, since $N_G[x]=N_G[y]\cup \{v\} $.


Thus, we have shown that $G_v$ is identifiable.

\medskip

\noindent\textbf{(4) \boldmath{$\IDt(G_v)\geq n-3$}.} Suppose on the contrary that there exists a total dominating identifying code $C'$ with cardinality $n-4$ in $G_v$. Consider code $C=C'\cup\{u,v\}$ in $G$. It is clearly total dominating and has cardinality of $n-2$. Moreover, $u$ separates itself and $v$ from all other vertices. Hence, we are done unless $I(v)=I(u)=\{v,u\}$, thus, assume that $N(v)\cap C'=\emptyset$. Since $G$ is connected, $v$ has at least one neighbour, $w$, other than $u$. Let us instead consider code $C_w=C\cup\{w,v\}$. Again, the code is clearly total dominating. Moreover, $u$ is the only vertex with $I(u)=\{v\}$ since $C'$ is total dominating in $G_v$. Furthermore, $I(v)=\{v,w\}$ and if $I(a)=\{v,w\}$ for some vertex $a$, then $a$ is not dominated by $C'$ in $G_v$, a contradiction. Thus, we found a total dominating identifying code of cardinality $n-2$ in $G$, a contradiction.
\end{proof}

In the following lemma, we find the set of graphs of order $n$ which have (usual) identifying code number $n-1$ and to which we may add a leaf and a support vertex so that the resulting graph has total dominating identifying code number $n'-1=n+1$, but larger than the usual identifying code number. Small stars are  special cases for the lemma that are  excluded. In particular, star $K_{1,3}$ can actually be constructed from $A_1\in \mathcal{A}$ by adding a universal vertex $v$ and a leaf $u$ to $v$. The star $K_{1,2}$ is isomorphic to $P_3$ and the bull graph (illustrated in Figure~\ref{fig:smallgraphs-P3}(d)) can be constructed from it by adding a non-universal support vertex. However, the bull graph can also be constructed from $K_1$ by joining it to a copy of $K_2$ and adding leaves to the two newly added vertices. 



\begin{lemma}\label{Lem:ExtremalIDt from ID}
Let $G$ be a connected graph on $n\geq3$ vertices with support vertex $v$ and an adjacent leaf $u$ and $G'=G-u-v\neq K_{1,p}$ with $p\leq 3$. If $\IDt(G)=n-1$ and $\ID(G')=n-3$, then $G'\in \mathcal{A}^*\cup(\mathcal{A}^*\bowtie K_1)$and $v$ is a universal vertex in $G$.
\end{lemma}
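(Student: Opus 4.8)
```latex
The plan is to prove the statement by analyzing the structure of a minimum total dominating identifying code $C$ of $G$ and relating it to separating codes of the subgraph $G'=G-u-v$. I would first observe that since $\IDt(G)=n-1$ and $G$ is not a star (as $G'\neq K_{1,p}$ for $p\leq 3$ rules out the relevant small cases), the deletion Lemma~\ref{Lem:deletion} applies and tells us that $G'$ is connected, identifiable, and $\IDt(G')\geq n-3$. Combined with the hypothesis $\ID(G')=n-3$, this pins down $\IDt(G')=n-3=|V(G')|-1$, since in general $\IDt(G')\geq \ID(G')$ for any identifiable $G'$ admitting a total dominating set. So $G'$ is a graph attaining the extremal value $\IDt(G')=|V(G')|-1$ \emph{and} $\ID(G')=|V(G')|-1$.

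Next I would invoke Theorem~\ref{The:IDn-1}, which characterizes connected identifiable graphs with $\ID(G')=|V(G')|-1$ as exactly $\{K_{1,t}\mid t\geq 2\}\cup \mathcal{A}^*\cup(\mathcal{A}^*\bowtie K_1)$. Since the hypothesis explicitly excludes the small stars $K_{1,p}$ with $p\leq 3$, and I would argue that the remaining stars $K_{1,t}$ with $t\geq 4$ cannot arise here (either because attaching $u$ and $v$ to such a star would force $\ID(G)=n-1$ rather than a strictly larger total-domination value, or by a direct contradiction with the assumption that $\IDt(G)>\ID(G)$), I can conclude that $G'\in \mathcal{A}^*\cup(\mathcal{A}^*\bowtie K_1)$. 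This disposes of the first conclusion of the lemma. I expect this star-elimination step to require a short but careful case check, invoking the structure of stars and the behavior of identifying codes when a pendant path is attached to the center versus a leaf.

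The main obstacle, and the crux of the lemma, will be showing that $v$ must be a \emph{universal} vertex of $G$. Here I would use the extremal rigidity of graphs in $\mathcal{A}^*\cup(\mathcal{A}^*\bowtie K_1)$ established in Proposition~\ref{prop:extremal-family-IDtn-1}: every separating code of such a $G'$ has at most one non-codeword, and that non-codeword is a specific vertex (a vertex of type $x_1$ or $x_{2i_j}$ in some copy of $A_{i_j}$, or the universal vertex when $G'\in\mathcal{A}^*\bowtie K_1$). The strategy is to suppose, for contradiction, that $v$ is \emph{not} adjacent to some vertex $z\in V(G')$, and then construct a total dominating identifying code of $G$ of size at most $n-2$, contradicting $\IDt(G)=n-1$. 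Concretely, I would take an optimal separating code of $G'$ (of size $n-3$) whose unique non-codeword is chosen compatibly with $z$, add $v$ and $u$ (or shift a codeword to $u$) to restore total domination, and verify that $z$ together with the missing neighbour relation lets us omit an extra vertex — the non-adjacency of $v$ to $z$ is exactly what prevents $v$ from interfering with the separation structure, allowing a strictly smaller code. Making this counting argument precise, and checking all $I$-set collisions involving $u$, $v$, and the designated non-codeword of $G'$, is where the bulk of the work lies.

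Finally I would collect the two conclusions: $G'\in \mathcal{A}^*\cup(\mathcal{A}^*\bowtie K_1)$ and $v$ universal in $G$, completing the proof. A subtlety to watch throughout is the interaction between the degree of $v$ in $G$ and whether the codes I construct remain total dominating; as in Lemma~\ref{Lem:deletion}, the case $\deg_G(v)=2$ (i.e.\ $v$ has only the leaf $u$ and one other neighbour) may need separate handling, and I would treat it by a direct shift argument rather than by the generic construction.
```
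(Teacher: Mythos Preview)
Your high-level outline matches the paper's: apply Theorem~\ref{The:IDn-1} to place $G'$ in $\{K_{1,t}\mid t\ge 2\}\cup\mathcal{A}^*\cup(\mathcal{A}^*\bowtie K_1)$, rule out stars, then argue that $v$ is universal by exhibiting a total dominating identifying code of size $n-2$ whenever $v$ has a non-neighbour in $G'$. But your star-elimination step is a genuine gap. You propose to rule out $G'=K_{1,t}$ for $t\geq 4$ ``by a direct contradiction with the assumption that $\IDt(G)>\ID(G)$'' --- but no such assumption is present; the lemma only assumes $\IDt(G)=n-1$ and $\ID(G')=n-3$, and establishing $\ID(G)=n-1$ would contradict nothing. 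The paper handles this case by an explicit construction: it first uses Lemma~\ref{Lem:deletion} applied to a \emph{different} leaf/support pair (namely a leaf of the star and the star's centre) to force $v$ to be adjacent to at least $t-1$ of the star's leaves, and then in each remaining subcase writes down a total dominating identifying code of $G$ of size $n-2$, contradicting $\IDt(G)=n-1$. Your plan contains no substitute for this argument.

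Two smaller points. Your opening detour through Lemma~\ref{Lem:deletion} to deduce $\IDt(G')=n-3$ is harmless but superfluous: Theorem~\ref{The:IDn-1} applies directly from the hypothesis $\ID(G')=n-3$. And for universality, your single generic recipe (minimum separating code of $G'$ plus $\{u,v\}$, then drop one further vertex because $v$ supplies an extra separation) is the right instinct but will not cover all cases uniformly; the paper proceeds case by case according to whether $G'$ is a single $A_k$, a join of several $A_{i_j}$'s, or lies in $\mathcal{A}\bowtie K_1$, and in the last case the winning code is $V(G)\setminus\{u,y\}$ or $V(G)\setminus\{z,y\}$ (with $u$ omitted rather than included), which falls outside your template. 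Expect substantive case work here rather than one uniform construction.
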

\begin{proof}
Since $|V(G')|=n-2$ and $\ID(G')=n-3$, we have $G'\in \{K_{1,t}\mid t\geq2\}\cup \mathcal{A}^*\cup(\mathcal{A}^*\bowtie K_1)$ by Theorem~\ref{The:IDn-1}. Based on this, we distinguish several cases. Notice that we have $n\geq5$. 

\medskip

\noindent\textbf{Case 1: $G'$ is $A_k\in\mathcal{A}$.} Assume by contradiction that $v$ is not a universal vertex in $G$. By Lemma~\ref{Lem:deletion}, $G'$ must be connected, hence we have $k\geq 2$. Moreover, assume that $i$ is the smallest integer for which $x_i\not\in N(v)$ and $x_{i-1}\in N(v)$. If $x_1\not\in N(v)$, then let $i$ be the smallest integer for which $x_i\not\in N(v)$ and $x_{i+1}\in N(v)$. Moreover,  assume that $v$ does not separate the two maximum cliques in $A_k$, that is, we do not have $N(v)\cap V(A_k)=\{x_1,\dots,x_k\}$ or $N(v)\cap V(A_k)=\{x_{k+1},\dots,x_{2k}\}$. Assume now that $i\leq k$ and $x_{i-1}\in N(v)$. Thus, $v$ separates $x_i$ and $x_{i-1}$. Consider code $C=\{v,u\}\cup V(A_k)\setminus\{x_{2k},x_{i+k-1}\}$. Code $C$ is clearly a total dominating set and it has $n-2$ vertices. Moreover, $V(A_k)\setminus \{x_{2k}\}$ is an identifying code in $A_k$ and codeword $x_{i+k-1}$ is used to separate vertices $x_i$ and $x_{i-1}$ from each other. Furthermore, each vertex $x_j$, $2\leq j\leq 2k$, $j\neq i,i-1$, is identified in the same way as in $A_k$, and vertices $x_i$ and $x_{i-i}$ are separated by $v$. Finally, since $v$ and $u$ are the only vertices with $u$ in their $I$-sets, they have unique $I$-sets. The case where $x_1\not\in N(v)$ is similar with the exception that we have $x_1$ as the non-codeword instead of $x_{2k}$. 
Now, we are left with the case where $N(v)\cap V(A_k)=\{x_1,\dots,x_k\}$ or $N(v)\cap V(A_k)=\{x_{k+1},\dots,x_{2k}\}$. These two cases are symmetric, thus without loss of generality, we may assume that the first one holds. Consider the code $C=\{v,u\}\cup V(A_k)\setminus\{x_1,x_{2k}\}$. Recall that $V(A_k)\setminus \{x_1\}$ is an identifying code in $A_k$. Moreover, the only identical $I$-sets with the code $V(A_k)\setminus \{x_1,x_{2k}\}$ in $A_k$ are $I(x_k)$ and $I(x_{k+1})$. However, in $G$, the codeword $v$ separates these two vertices. Thus, $C$ is a total dominating identifying code in $G$. 

Thus, $v$ is a universal vertex, as claimed.

\medskip

\noindent\textbf{Case 2: $G'\in \mathcal{A}$ but $G'$ is not any graph $A_i$.} Assume that $v$ is not a universal vertex in $G$. Recall that $G'$ is constructed with a sequence of joins of graphs $A_{i_j}$. Notice that if there exists a subgraph $A_{i_j}$ of $G'$ such that $v$ is adjacent to some but not all of the vertices of that subgraph, then we can find a new non-codeword as in Case~1 if $i_j\geq2$. When $i_j=1$, $\deg_G(v)\geq3$ and $v$ separates vertices in $A_{i_j}$, we can proceed as in Case 1, that is, have both vertices of $A_{i_j}$ as non-codewords. When $i_j=1$ and $N(v)=\{u,x_1\}$ where $x_1\in V(A_{i_j})$, we can consider total dominating identifying code $ V(G)\setminus \{u,x_2\}$ where $x_2\neq x_1$ is the other vertex of $A_{i_j}$.

Moreover, if, for each $j$, every vertex in subgraph $A_{i_j}$ is either adjacent or non-adjacent to $v$ and, say, $V(A_{i_1})\subseteq N(v)$ and $V(A_{i_2})\cap N(v)=\emptyset$, then we may choose as the two non-codewords the vertices corresponding to $x_1$ in each of the subgraphs $A_{i_1}$ and $A_{i_2}$. Without $v$, we could not do this since nothing would separate vertices $x_{i_1+1}$ and $x_{i_2+1}$ in the corresponding subgraphs, but now $v$ separates them. Hence, we can construct a total dominating identifying code of size at most $n-2$, a contradiction, and $v$ is universal.

\medskip

\noindent\textbf{Case 3: $G'\in \mathcal{A}\bowtie K_1$.} Notice that $A_1\bowtie K_1$ is $K_{1,2}$ and hence, by our assumptions, we do not have to consider it. Hence, $|V(G')|\geq5$. Denote by $y$ the universal vertex of $G'$. Recall (see Proposition~\ref{prop:extremal-family-IDtn-1}) that the only minimum identifying code in $G'$ consists of every vertex except $y$. Assume first that there does not exist any vertex $z\in V(G')$ with $N[v]=N[z]\cup\{u\}$. Then we may consider code $C=V(G)\setminus\{u,y\}$. Code $C$ is total dominating since $G'\neq A_1\bowtie K_1$ and $|V(G')|\geq5$. Moreover, all vertices in $G'$ have pairwise distinct $I$-sets since $C\setminus \{v\}$ is a total dominating identifying code in $G'$. Furthermore, $u$ is the only vertex with $I(u)=\{v\}$ while $v$ is separated from other vertices since it has a unique closed neighbourhood. 

Assume then that there exists a vertex $z\in V(G')$ with $N[v]=N[z]\cup\{u\}$ and $z\neq y$. Now, we consider code $C=V(G)\setminus \{z,y\}$. Again, code $C$ is a total dominating set since $V(G')\setminus \{y\}$ is total dominating in $G'$ and $v$ is adjacent to any vertex which would be dominated by $z$. Moreover, $V(G')\setminus \{y\}$ is an identifying code in graph $G'$. Codeword $u$ separates $u$ and $v$ from other vertices in $G$ and $|I(v)|>|I(u)|$. Since $N[v]=N[z]\cup\{u\}$, any vertices that would be separated by $z$ in $G'$ by code $V(G')\setminus \{y\}$ are now separated by $v$. Hence, $C$ is a total dominating identifying code in $G$ of cardinality $n-2$ and $v$ is universal.


\medskip

\noindent\textbf{Case 4: $G'\in \{K_{1,t}\mid t\geq2\}$}. 
We first show that $G'$ has exactly two leaves. Consider on the contrary that $G'=K_{1,t}$. By our assumption that $G'$ is not $K_{1,p}$ for $p\leq 3$, we have $t\geq 4$. Denote by $w$ the central vertex of $G'$ and by $\{w_1,\dots,w_t\}=L(G')$ the $t$ leaves of $G'$. Observe first that if $v$ is adjacent to at most $t-2$ leaves of $G'$, then the graph $G-w-w_1$, where $w_1\not\in N_G(v)$, is disconnected and hence, by Lemma~\ref{Lem:deletion}, $\IDt(G)<n-1$, a contradiction. Then, consider the case where $v$ is adjacent to $t-1$ leaves $w_2,\dots, w_t$ (possibly, $v$ is adjacent to $w$ as well). We choose $C=V(G)\setminus\{w_2,u\}$, and show it is a total dominating identifying code of cardinality $n-2$. Observe that it is clearly total dominating. Moreover, $C$ is identifying since $w_1$ separates $w_1$ and $w$ from other vertices and $|I(w)|>|I(w_1)|$, $u$ is the only vertex with $I(u)=\{v\}$, $v$ is adjacent to multiple codewords in $L(G')$ and hence separated from the leaves in $L(G')$ and each codeword in $L(G')$ is separated by itself from all other leaves. Thus, $C$ is identifying, a contradiction.

Assume then that $v$ is adjacent to each leaf in $G'$ (possibly, $v$ is adjacent to $w$). Now we choose $C=V(G)\setminus\{w,w_1\}$. Again, code $C$ is clearly total dominating and has cardinality of $n-2$. Moreover, it is identifying. Indeed, $u$ and $v$ are separated from other vertices by $u$ and $|I(v)|>|I(u)|$, $w$ is clearly separated from the leaves of $G'$, $I(w_1)=\{v\}$ and is unique, and each leaf codeword is separated from the other leaves by itself. Hence, the claim follows.
%
\end{proof}

Now we are ready to prove the exact characterization of extremal graphs from Theorem~\ref{The:IDtn-1Char}.

\begin{proof}[Proof of Theorem~\ref{The:IDtn-1Char}]
By Proposition~\ref{Prop:IDtn-1}, $\IDt(G)= n$ if and only if $G=P_3$.

Let us first see that the graphs of the statement are indeed extremal. If $G\in  \{K_{1,t}\mid t\geq2\}\cup \mathcal{A}^*\cup(\mathcal{A}^*\bowtie K_1)$, then $\IDt(G)\geq n-1$, since $\ID(G)\geq n-1$ by Theorem~\ref{The:IDn-1}. If $G'=G''\bowtie K_m$ where $m\geq1$ and $G''\in \mathcal{A}\cup(\mathcal{A}\bowtie K_1)$, and we add attach a leaf to each vertex in the clique $K_m$, then $\IDt(G)\geq n-1$ by Proposition~\ref{prop:tID-new-extremal}.

\medskip

We then show that these are the only graphs attaining the extremal value of $n-1$. Let $\IDt(G)=n-1$ and $G$ be a graph other than a star. By Lemma~\ref{Lem:n-1minDegree}, if there are no leaves in $G$, then $\ID(G)=n-1$ and we are done by Theorem~\ref{The:IDn-1}. Thus, we assume that $G$ has at least one leaf $u$ and an adjacent support vertex $v$, and we proceed by induction on the number $n$ of vertices. 

For the base cases, let us first go through all the graphs with $3\leq n\leq 6$, with leaves, and $\IDt(G)=n-1$. Let $v$ be a support vertex in $G$ and $u$ be the adjacent leaf. The only identifiable graph with $n=3$ is $P_3$, which is isomorphic to $K_{1,2}$ and hence in the family. When $n=4$ we have $P_4$, which is isomorphic to $A_2$ and in the family as well. 

For $n=5$, due to Lemma~\ref{Lem:deletion} we are only interested in the graphs for which $G-u-v$ is identifiable, connected and $\IDt(G-u-v)\geq2$; that is, $G-u-v$ is $P_3$. The possible graphs are depicted in Figure~\ref{fig:smallgraphs-P3}. The three graphs in (a), (b) and (c) have a total dominating identifying code of size at most $n-2$, while the two other ones are in the extremal family. Indeed, (d) is the bull graph, which is obtained from $K_1$ (i.e. $A_0\bowtie K_1$) in $\mathcal A\bowtie K_1$ by joining it to $K_2$ and adding an adjacent leaf to each vertex of $K_2$. Moreover, (e) is obtained from $P_3$ (i.e. $A_1\bowtie K_1$) in $\mathcal A\bowtie K_1$ by joining it to $K_1$ and adding an adjacent leaf to its vertex. 

When $n=6$, again by Lemma~\ref{Lem:deletion}, we are only interested in the connected identifiable graphs for which $\IDt(G-u-v)\geq3$, that is, for which $G-u-v$ is $P_4$, $C_4$ or $K_{1,3}$. 
There exist nine such graphs for which $G-u-v$ is $P_4$, five graphs for which $G-u-v$ is $C_4$ and seven graphs for which $G-u-v$ is $K_{1,3}$. However, by Lemma~\ref{Lem:deletion}, we may omit each graph $G$ from which we may obtain an unconnected or non-identifiable graph by deleting a leaf-support vertex pair. After that we are left with six  graphs for which $G-u-v$ is $P_4$, five graphs for which $G-u-v$ is $C_4$ and four graphs for which $G-u-v$ is $K_{1,3}$, see Figure~\ref{fig:smallgraphs-P4-C4}. Apart from graphs (d), (f), (k) and (n), all have a total dominating identifying code of size at most $n-2$. Graph (d) is in the family, since it is isomorphic to the empty graph $A_0$ in $\mathcal A$, to which has been joined a copy of $K_3$ with a leaf attached to each vertex. Graphs (f) and (k) are in the family as well, as they are either $P_4$ (i.e. $A_2$ in $\mathcal A$) or $C_4$ (i.e. $A_1\bowtie A_1$ in $\mathcal A$) joined to $K_1$ whose vertex a leaf is attached to. Finally, (n) is also in the family, as it is $A_1$ joined to $K_2$ whose vertices we have attached leaves.

Hence, we can assume from now on that $n\geq7$ and we proceed with the inductive step.

\begin{figure}[ht]
\begin{center}

  \subfigure[]{
    \scalebox{1}{\begin{tikzpicture}[join=bevel,inner sep=0.5mm]
        \node (u) at (0,1) [draw, circle, label={90:$u$}] {};
        \node (v) at (1,1) [draw, circle, label={90:$v$}, fill=black] {};
        \node (x) at (0,0) [draw, circle, fill=black] {};
        \node (y) at (1,0) [draw, circle, fill=black] {};
        \node (z) at (2,0) [draw, circle] {};
        \draw [-] (u)--(v)--(x)--(y)--(z);
  \end{tikzpicture}}
  }\qquad
  \subfigure[]{
    \scalebox{1}{\begin{tikzpicture}[join=bevel,inner sep=0.5mm]
        \node (u) at (0,1) [draw, circle, label={90:$u$}] {};
        \node (v) at (1,1) [draw, circle, label={90:$v$}, fill=black] {};
        \node (x) at (0,0) [draw, circle, fill=black] {};
        \node (y) at (1,0) [draw, circle, fill=black] {};
        \node (z) at (2,0) [draw, circle] {};
        \draw [-] (u)--(v)--(y) (x)--(y)--(z);
  \end{tikzpicture}}
  }\qquad
    \subfigure[]{
    \scalebox{1}{\begin{tikzpicture}[join=bevel,inner sep=0.5mm]
        \node (u) at (0,1) [draw, circle, label={90:$u$}] {};
        \node (v) at (1,1) [draw, circle, label={90:$v$}, fill=black] {};
        \node (x) at (0,0) [draw, circle, fill=black] {};
        \node (y) at (1,0) [draw, circle] {};
        \node (z) at (2,0) [draw, circle, fill=black] {};
        \draw [-] (u)--(v)--(x)--(y)--(z)--(v);
  \end{tikzpicture}}
  }\qquad
    \subfigure[]{
    \scalebox{1}{\begin{tikzpicture}[join=bevel,inner sep=0.5mm]
        \node (u) at (0,1) [draw, circle, label={90:$u$}, fill=black] {};
        \node (v) at (1,1) [draw, circle, label={90:$v$}, fill=black] {};
        \node (x) at (0,0) [draw, circle] {};
        \node (y) at (1,0) [draw, circle, fill=black] {};
        \node (z) at (2,0) [draw, circle, fill=black] {};
        \draw [-] (u)--(v)--(x)--(y)--(z) (v)--(y);
  \end{tikzpicture}}
  }\qquad
    \subfigure[]{
    \scalebox{1}{\begin{tikzpicture}[join=bevel,inner sep=0.5mm]
        \node (u) at (0,1) [draw, circle, label={90:$u$}, fill=black] {};
        \node (v) at (1,1) [draw, circle, label={90:$v$}, fill=black] {};
        \node (x) at (0,0) [draw, circle, fill=black] {};
        \node (y) at (1,0) [draw, circle] {};
        \node (z) at (2,0) [draw, circle, fill=black] {};
        \draw [-] (u)--(v)--(x)--(y)--(z)--(v)--(y);
  \end{tikzpicture}}
  }

\end{center}
  \caption{The possibilities for graph $G$ when $G-u-v$ is isomorphic to $P_3$, in the proof of Theorem~\ref{The:IDtn-1Char}. The black vertices form total dominating identifying codes.}
\label{fig:smallgraphs-P3}
\end{figure}
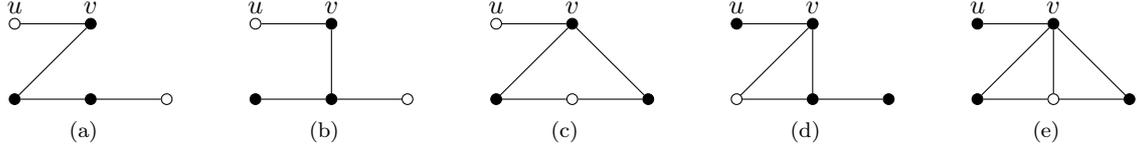

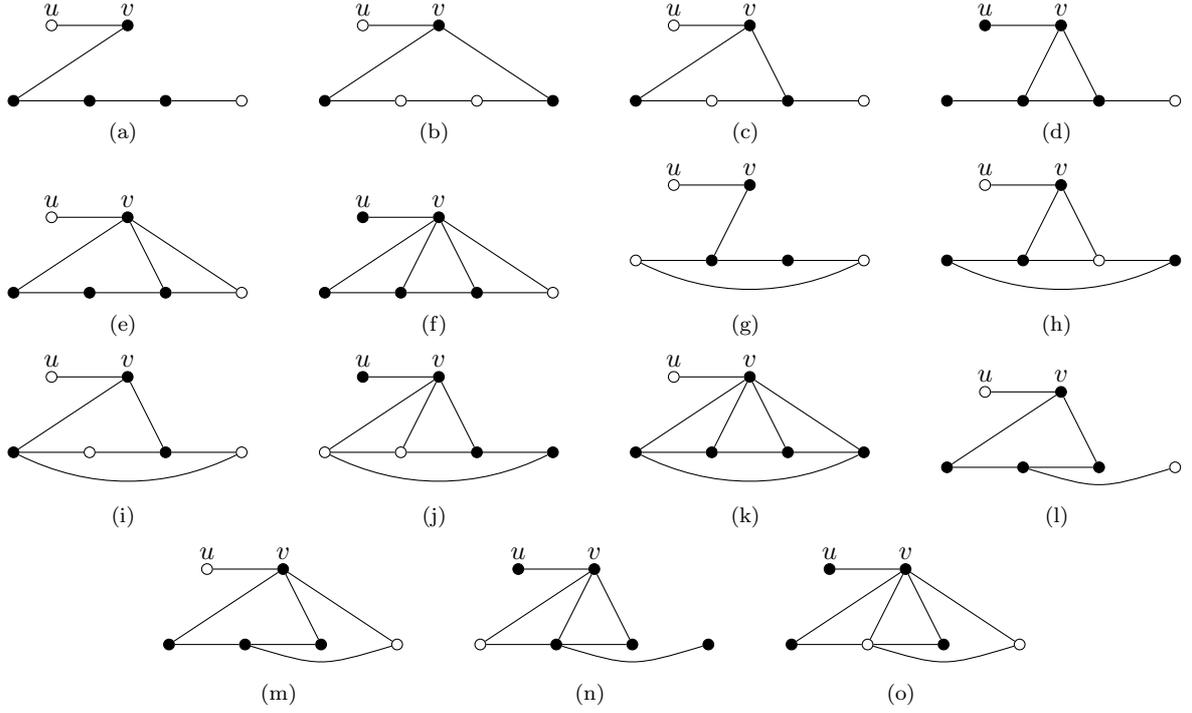
\begin{figure}[ht]
\begin{center}

  \subfigure[]{
    \scalebox{1}{\begin{tikzpicture}[join=bevel,inner sep=0.5mm]
        \node (u) at (0,1) [draw, circle, label={90:$u$}] {};
        \node (v) at (1,1) [draw, circle, label={90:$v$}, fill=black] {};
        \node (x) at (-0.5,0) [draw, circle, fill=black] {};
        \node (y) at (0.5,0) [draw, circle, fill=black] {};
        \node (z) at (1.5,0) [draw, circle, fill=black] {};
        \node (t) at (2.5,0) [draw, circle] {};        
        \draw [-] (u)--(v)--(x)--(y)--(z)--(t);
  \end{tikzpicture}}
  }\qquad
  \subfigure[]{
    \scalebox{1}{\begin{tikzpicture}[join=bevel,inner sep=0.5mm]
        \node (u) at (0,1) [draw, circle, label={90:$u$}] {};
        \node (v) at (1,1) [draw, circle, label={90:$v$}, fill=black] {};
        \node (x) at (-0.5,0) [draw, circle, fill=black] {};
        \node (y) at (0.5,0) [draw, circle] {};
        \node (z) at (1.5,0) [draw, circle] {};
        \node (t) at (2.5,0) [draw, circle, fill=black] {};        
        \draw [-] (u)--(v)--(x)--(y)--(z)--(t)--(v);
  \end{tikzpicture}}
  }\qquad
    \subfigure[]{
    \scalebox{1}{\begin{tikzpicture}[join=bevel,inner sep=0.5mm]
        \node (u) at (0,1) [draw, circle, label={90:$u$}] {};
        \node (v) at (1,1) [draw, circle, label={90:$v$}, fill=black] {};
        \node (x) at (-0.5,0) [draw, circle, fill=black] {};
        \node (y) at (0.5,0) [draw, circle] {};
        \node (z) at (1.5,0) [draw, circle, fill=black] {};
        \node (t) at (2.5,0) [draw, circle] {};        
        \draw [-] (u)--(v)--(x)--(y)--(z)--(t) (v)--(z);
  \end{tikzpicture}}
  }\qquad
    \subfigure[]{
    \scalebox{1}{\begin{tikzpicture}[join=bevel,inner sep=0.5mm]
        \node (u) at (0,1) [draw, circle, label={90:$u$}, fill=black] {};
        \node (v) at (1,1) [draw, circle, label={90:$v$}, fill=black] {};
        \node (x) at (-0.5,0) [draw, circle, fill=black] {};
        \node (y) at (0.5,0) [draw, circle, fill=black] {};
        \node (z) at (1.5,0) [draw, circle, fill=black] {};
        \node (t) at (2.5,0) [draw, circle] {};        
        \draw [-] (u)--(v)--(y) (x)--(y)--(z)--(t) (v)--(z);
  \end{tikzpicture}}
  }\qquad
    \subfigure[]{
    \scalebox{1}{\begin{tikzpicture}[join=bevel,inner sep=0.5mm]
        \node (u) at (0,1) [draw, circle, label={90:$u$}] {};
        \node (v) at (1,1) [draw, circle, label={90:$v$}, fill=black] {};
        \node (x) at (-0.5,0) [draw, circle, fill=black] {};
        \node (y) at (0.5,0) [draw, circle, fill=black] {};
        \node (z) at (1.5,0) [draw, circle, fill=black] {};
        \node (t) at (2.5,0) [draw, circle] {};        
        \draw [-] (u)--(v)--(x)--(y)--(z)--(t)--(v)--(z);
  \end{tikzpicture}}
  }\qquad
    \subfigure[]{
    \scalebox{1}{\begin{tikzpicture}[join=bevel,inner sep=0.5mm]
        \node (u) at (0,1) [draw, circle, label={90:$u$}, fill=black] {};
        \node (v) at (1,1) [draw, circle, label={90:$v$}, fill=black] {};
        \node (x) at (-0.5,0) [draw, circle, fill=black] {};
        \node (y) at (0.5,0) [draw, circle, fill=black] {};
        \node (z) at (1.5,0) [draw, circle, fill=black] {};
        \node (t) at (2.5,0) [draw, circle] {};        
        \draw [-] (u)--(v)--(x)--(y)--(z)--(t)--(v)--(y) (v)--(z);
  \end{tikzpicture}}
  }\qquad
      \subfigure[]{
    \scalebox{1}{\begin{tikzpicture}[join=bevel,inner sep=0.5mm]
        \node (u) at (0,1) [draw, circle, label={90:$u$}] {};
        \node (v) at (1,1) [draw, circle, label={90:$v$}, fill=black] {};
        \node (x) at (-0.5,0) [draw, circle] {};
        \node (y) at (0.5,0) [draw, circle, fill=black] {};
        \node (z) at (1.5,0) [draw, circle, fill=black] {};
        \node (t) at (2.5,0) [draw, circle] {};        
        \draw [-] (u)--(v) (x)--(y)--(z)--(t) (v)--(y);
        \draw [-] (x) .. controls +(1,-0.5) and +(-1,-0.5) .. (t);
  \end{tikzpicture}}
  }\qquad
      \subfigure[]{
    \scalebox{1}{\begin{tikzpicture}[join=bevel,inner sep=0.5mm]
        \node (u) at (0,1) [draw, circle, label={90:$u$}] {};
        \node (v) at (1,1) [draw, circle, label={90:$v$}, fill=black] {};
        \node (x) at (-0.5,0) [draw, circle, fill=black] {};
        \node (y) at (0.5,0) [draw, circle, fill=black] {};
        \node (z) at (1.5,0) [draw, circle] {};
        \node (t) at (2.5,0) [draw, circle, fill=black] {};        
        \draw [-] (u)--(v) (x)--(y)--(z)--(t) (z)--(v)--(y);
        \draw [-] (x) .. controls +(1,-0.5) and +(-1,-0.5) .. (t);
  \end{tikzpicture}}
  }\qquad
        \subfigure[]{
    \scalebox{1}{\begin{tikzpicture}[join=bevel,inner sep=0.5mm]
        \node (u) at (0,1) [draw, circle, label={90:$u$}] {};
        \node (v) at (1,1) [draw, circle, label={90:$v$}, fill=black] {};
        \node (x) at (-0.5,0) [draw, circle, fill=black] {};
        \node (y) at (0.5,0) [draw, circle] {};
        \node (z) at (1.5,0) [draw, circle, fill=black] {};
        \node (t) at (2.5,0) [draw, circle] {};        
        \draw [-] (u)--(v)--(x)--(y)--(z)--(t) (v)--(z);
        \draw [-] (x) .. controls +(1,-0.5) and +(-1,-0.5) .. (t);
  \end{tikzpicture}}
  }\qquad
        \subfigure[]{
    \scalebox{1}{\begin{tikzpicture}[join=bevel,inner sep=0.5mm]
        \node (u) at (0,1) [draw, circle, label={90:$u$}, fill=black] {};
        \node (v) at (1,1) [draw, circle, label={90:$v$}, fill=black] {};
        \node (x) at (-0.5,0) [draw, circle] {};
        \node (y) at (0.5,0) [draw, circle] {};
        \node (z) at (1.5,0) [draw, circle, fill=black] {};
        \node (t) at (2.5,0) [draw, circle, fill=black] {};        
        \draw [-] (u)--(v)--(x)--(y)--(z)--(t) (z)--(v)--(y);
        \draw [-] (x) .. controls +(1,-0.5) and +(-1,-0.5) .. (t);
  \end{tikzpicture}}
  }\qquad
        \subfigure[]{
    \scalebox{1}{\begin{tikzpicture}[join=bevel,inner sep=0.5mm]
        \node (u) at (0,1) [draw, circle, label={90:$u$}] {};
        \node (v) at (1,1) [draw, circle, label={90:$v$}, fill=black] {};
        \node (x) at (-0.5,0) [draw, circle, fill=black] {};
        \node (y) at (0.5,0) [draw, circle, fill=black] {};
        \node (z) at (1.5,0) [draw, circle, fill=black] {};
        \node (t) at (2.5,0) [draw, circle, fill=black] {};        
        \draw [-] (u)--(v)--(x)--(y)--(z)--(t)--(v)--(y) (v)--(z);
        \draw [-] (x) .. controls +(1,-0.5) and +(-1,-0.5) .. (t);
  \end{tikzpicture}}
  }\qquad 
          \subfigure[]{
    \scalebox{1}{\begin{tikzpicture}[join=bevel,inner sep=0.5mm]
        \node (u) at (0,1) [draw, circle, label={90:$u$}] {};
        \node (v) at (1,1) [draw, circle, label={90:$v$}, fill=black] {};
        \node (x) at (-0.5,0) [draw, circle, fill=black] {};
        \node (y) at (0.5,0) [draw, circle, fill=black] {};
        \node (z) at (1.5,0) [draw, circle, fill=black] {};
        \node (t) at (2.5,0) [draw, circle] {};     
        \draw [-] (u)--(v)--(x)--(y)--(z)(v)--(z) ;
        \draw [-] (y) .. controls +(1,-0.3) and +(-1,-0.3) .. (t);
  \end{tikzpicture}}
  }\qquad
          \subfigure[]{
    \scalebox{1}{\begin{tikzpicture}[join=bevel,inner sep=0.5mm]
        \node (u) at (0,1) [draw, circle, label={90:$u$}] {};
        \node (v) at (1,1) [draw, circle, label={90:$v$}, fill=black] {};
        \node (x) at (-0.5,0) [draw, circle, fill=black] {};
        \node (y) at (0.5,0) [draw, circle, fill=black] {};
        \node (z) at (1.5,0) [draw, circle, fill=black] {};
        \node (t) at (2.5,0) [draw, circle] {};     
        \draw [-] (u)--(v)--(x)--(y)--(z) (t)--(v)--(z);
        \draw [-] (y) .. controls +(1,-0.3) and +(-1,-0.3) .. (t);
  \end{tikzpicture}}
  }\qquad
          \subfigure[]{
    \scalebox{1}{\begin{tikzpicture}[join=bevel,inner sep=0.5mm]
        \node (u) at (0,1) [draw, circle, label={90:$u$}, fill=black] {};
        \node (v) at (1,1) [draw, circle, label={90:$v$}, fill=black] {};
        \node (x) at (-0.5,0) [draw, circle] {};
        \node (y) at (0.5,0) [draw, circle, fill=black] {};
        \node (z) at (1.5,0) [draw, circle, fill=black] {};
        \node (t) at (2.5,0) [draw, circle, fill=black] {};    
        \draw [-] (u)--(v)--(x)--(y)--(z) (z)--(v)--(y);
        \draw [-] (y) .. controls +(1,-0.3) and +(-1,-0.3) .. (t);
  \end{tikzpicture}}
  }\qquad
          \subfigure[]{
    \scalebox{1}{\begin{tikzpicture}[join=bevel,inner sep=0.5mm]
        \node (u) at (0,1) [draw, circle, label={90:$u$}, fill=black] {};
        \node (v) at (1,1) [draw, circle, label={90:$v$}, fill=black] {};
        \node (x) at (-0.5,0) [draw, circle, fill=black] {};
        \node (y) at (0.5,0) [draw, circle] {};
        \node (z) at (1.5,0) [draw, circle, fill=black] {};
        \node (t) at (2.5,0) [draw, circle] {};    
        \draw [-] (u)--(v)--(x)--(y)--(z) (z)--(v)--(y) (v)--(t);
        \draw [-] (y) .. controls +(1,-0.3) and +(-1,-0.3) .. (t);
  \end{tikzpicture}}
  }
\end{center}
  \caption{The possibilities for graph $G$ when $G-u-v$ is isomorphic to $P_4$, $C_4$ or $K_{1,3}$, in the proof of Theorem~\ref{The:IDtn-1Char}. The black vertices form total dominating identifying codes.}
\label{fig:smallgraphs-P4-C4}
\end{figure}


By Lemma~\ref{Lem:deletion}, $G_v=G-u-v$ is a connected, identifiable graph with $\IDt(G_v)=n-3$ for any leaf $u$ and adjacent support vertex $v$. Notice that if $G_v$ is a star, then $\ID(G_v)=n-3$ and we have a contradiction with Lemma~\ref{Lem:ExtremalIDt from ID}. Moreover, if $\delta(G_v)\geq2$, then by Lemma~\ref{Lem:n-1minDegree}, $\ID(G_v)\geq n-3$ and we are done by Lemma~\ref{Lem:ExtremalIDt from ID}. Hence, we can assume that $G_v$ is not a star and has a vertex of degree~1. Then, since $G_v$ is also not a $P_4$ since $n\geq 7$, we have $\ID(G_v)\leq n-4$ by Theorem~\ref{The:IDn-1}. Thus, by induction, $G_v$ has the claimed structure of part~(ii) of the statement. That is, there exist graphs $G''\in \mathcal{A}\cup(\mathcal{A}\bowtie K_1)$ and $G'=G''\bowtie K_m$, for $m\geq1$,  such that we can form the graph $G_v$ by adding a leaf to every vertex in the clique $K_m$ of graph $G'$.

We claim that the only way to add vertex $v$ to $G_v$ is by making it a universal vertex in $G'$ and adding no edges between $v$ and the leaves of $G_v$.

We first show that there can be no edges in $G$ between $v$ and the set $L(G_v)$. Suppose on the contrary that there exists an edge between $w'$ and $v$, where $w'\in L(G_v)$ and $w\in S(G_v)$ is the support vertex adjacent to $w'$. Due to issues with total domination, we first consider the case where $N(v)=\{w',u\}$. By Lemma~\ref{Lem:deletion} we have $\IDt(G_v-w-w')\geq n-5$. Let $C'$ be a total dominating identifying code in $G_v-w-w'$. Now $C'\cup\{u,v,w'\}$ is a total dominating identifying code in $G$ of cardinality $n-2$, a contradiction. 

We then consider the case where $v$ has at least three neighbours in $G$. We split this case based on whether there exists a universal vertex $y$ of $G'$ such that $y\not\in S(G_v)$ (such a vertex exists only if $G''\in\mathcal A\bowtie K_1$). Assume first that such vertex $y$ does not exist. In this case, we may consider the code $C=V(G)\setminus\{w,w'\}$. It is total dominating since $v$ has at least three neighbours. Moreover, $V(G_v)\setminus \{w'\}$ is an identifying code in $G_v$ and we only need codeword $w$ to dominate $w'$. When we consider $C$ and graph $G$, we notice that $u$ and $v$ clearly have unique $I$-sets. Moreover, $w$ is the only non-codeword which is a universal vertex in $G'$ and every other vertex universal in $G'$ has an adjacent leaf codeword (since $y$ does not exist). Thus, $w$ is separated from other vertices. Finally, $w'$ is the only vertex which has exactly $v$ in its $I$-set.

Assume then that the vertex $y\in S(G_v)$ exists. Now, we may consider code $C=V(G)\setminus \{y,w\}$. Code $C$ is clearly total dominating. Moreover, $V(G)\setminus \{w\}$ and $V(G)\setminus \{y\}$  are identifying  codes in $G_v$ and $w$ is only needed to total dominate $w'$ in $G_v$. Since $y$ and $w$ are universal vertices in $G'$, they do not separate anything in $G'$. Moreover, $u$ and $v$ are clearly separated by $C$. Thus, the code is total dominating and identifying. Hence, we may from now on assume that $v$ is not adjacent to any leaf of $G_v$, that is, $L(G)=L(G_v)\cup\{u\}$.

We consider the case where there are some non-edges between the clique $K_m$ in $G'$, and $v$. Let us not have edge $vw$ in graph $G$, where $w$ is some vertex in the clique $K_m$ of $G'$ and $w'$ is either the universal vertex in $G''$, if such a vertex exists, and otherwise the leaf adjacent to $w$. Now, $C=V(G)\setminus\{w',u\}$ is a total dominating identifying code of $G$. Indeed, $V(G_v)\setminus\{w'\}$ is a total dominating identifying code in $G_v$ and $u$ is now the only vertex which has exactly $v$ in its $I$-set while $v$ is separated from all other vertices since it is the only non-leaf vertex which is not adjacent to $w$. 
Hence, we may from now on assume that $v$ has an edge with each vertex of clique $K_m$ in $G'$.

Finally, we are left with the case where we have some non-edges between $v$ and $G''$. Assume first that there does not exist a vertex $z\in V(G'')$ such that $N_G[z]\cup\{u\}=N_G[v]$. Now, we may consider code $C=V(G)\setminus \{u,w'\}$ where $w'$ is either the universal vertex in $G''$, if such a vertex exists, and otherwise some leaf in $L(G)$ other than $u$. Code $C$ is total dominating. Moreover, $V(G_v)\setminus \{w'\}$ is an identifying code in $G_v$. Finally, $u$ has a unique $I$-set as the only vertex adjacent to only $v$, while $v$ is separated from every other vertex in clique $K_m$ by having some non-edge to $G''$ and it is separated from every vertex in $G''$ since $z$ does not exist. Thus, $C$ is a total dominating identifying code of size $n-2$, a contradiction.

Consider then the case where we have the vertex $z\in V(G'')$ with $N_G[z]\cup\{u\}=N_G[v]$. Notice that since $v$ is not a universal vertex to $G''$, neither is $z$. Thus, we may consider $C=V(G)\setminus\{z,w'\}$ where $w'$ is either the universal vertex in $G''$, if such a vertex exists, and otherwise some leaf in $L(G)$ other than $u$. Again, $C$ is clearly total dominating and $V(G_v)\setminus \{w'\}$ is an identifying code in $G_v$. Again, $u$ and $v$ have unique $I$-sets with the same arguments as before. Moreover, any pair of vertices separated by $z$ in $G_v$, is now separated by $v$ in $G$ and hence, $C$ is an identifying code in $G$. Now, we have exhausted all the possibilities and the claim follows.
\end{proof}

\section{An upper bound for graphs of girth at least~5}\label{sec:twinfree}

Notice that the extremal graphs from the previous section either have many twins (stars for example), or small cycles. In this section, we prove a (tight) upper bound for total identifying codes of twin-free graphs of girth at least~5 that is much smaller than the one for the general case. Similar upper bounds for twin-free graphs have been studied in the context of location-domination, see~\cite{FH16,Heia,GGM14} and usual identifying codes~\cite{BFH15, ourIDpaper}.

We will need the following lemma, whose proof was given in~\cite{HHH06} (note that it was extended to a larger graph class in~\cite{ourIDpaper}, that includes all identifiable triangle-free graphs).

\begin{lemma}[\cite{HHH06}]\label{LemSupportBip}
If $T$ is a tree on $n\geq4$ vertices that is not the path $P_4$, then $$\IDt(T)\leq n-s(T).$$ 
\end{lemma}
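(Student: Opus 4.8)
The plan is to exhibit an explicit total dominating identifying code of the required size. Let $R\subseteq L(T)$ be a set containing \emph{exactly one} leaf adjacent to each support vertex, and set $C=V(T)\setminus R$. Since each leaf has a unique neighbour (its support vertex) and each support vertex contributes exactly one leaf to $R$, the set $R$ has exactly $s(T)$ elements, so $|C|=n-s(T)$. It therefore suffices to prove that $C$ is a total dominating identifying code whenever $n\geq 4$ and $T\neq P_4$.

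First I would verify total domination. Every leaf is dominated by its support vertex, which lies in $C$. A non-support, non-leaf vertex has only non-leaf neighbours (otherwise it would be a support vertex), all of which belong to $C$, and it has a neighbour since $T$ is connected. The only delicate point is a support vertex $v$ whose unique leaf was placed in $R$: here $v$ has no leaf neighbour left in $C$, but because $n\geq 4$ it cannot have degree $1$ (else $T=P_2$), so it has a non-leaf neighbour, which lies in $C$. Hence $C$ is total dominating.

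The main work is separation, i.e.\ showing $I(a)\neq I(b)$ for all distinct $a,b$. Here the key observation is that two codewords that are \emph{non-adjacent} are automatically separated, since each belongs to its own $I$-set but not to the other's. I would then dispose of the pairs involving a removed leaf $u$ (with support $v$): such a vertex has $I(u)=\{v\}$, and this singleton cannot equal the $I$-set of any other vertex, because any codeword $x$ with $I(x)=\{v\}$ would satisfy $x=v$, while $I(v)$ has size at least $2$ by total domination; two removed leaves have distinct supports and hence distinct $I$-sets; and a retained leaf has an $I$-set of size $2$ containing itself. The two genuinely delicate families are (a) a retained leaf $u$ together with its support $v$, and (b) a pair of \emph{adjacent} codewords.

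For family (a), $I(u)=\{u,v\}$, so a clash forces $I(v)=\{u,v\}$, meaning $v$ has no codeword neighbour besides $u$; since $v$ also has the removed leaf as a neighbour, $v$ would have degree $2$ with two leaf neighbours, i.e.\ $T=P_3$, excluded by $n\geq 4$. For family (b), the crucial use of acyclicity appears: if $x\sim y$ are codewords with $I(x)=I(y)$, then any codeword neighbour $z\neq y$ of $x$ would also be adjacent to $y$, creating a triangle; as $T$ is a tree this is impossible, so $y$ is the only codeword neighbour of $x$ and vice versa. Each of $x,y$ must then have its remaining neighbour in $R$ (a removed leaf, of which there is at most one per vertex), forcing $\deg(x)=\deg(y)=2$ and $T=P_4$, which is excluded by hypothesis. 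The main obstacle is precisely this separation bookkeeping: one must check that the only configurations producing equal $I$-sets are $P_3$ and $P_4$, and it is the absence of short cycles (here, triangles) in a tree that rules out adjacent codewords sharing a common neighbour --- this is the role played, in the broader setting of the section, by the girth hypothesis.
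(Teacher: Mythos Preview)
Your argument is correct. The paper does not supply its own proof of this lemma, citing instead the original source~\cite{HHH06}; your construction---removing exactly one leaf per support vertex and verifying that the complement $C=V(T)\setminus R$ is a total dominating identifying code---is precisely the standard argument, and your case analysis (removed leaves have singleton $I$-sets indexed by distinct supports; adjacent codewords with equal $I$-sets force $P_3$ or $P_4$ via the absence of triangles) is complete.
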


Lemma~\ref{LemSupportBip} was shown to be tight in~\cite{ourIDpaper}, for example for the 3-corona of any graph, the 1-corona of any triangle-free graph of order at least~3, or any star of order at least~3.


For total dominating identifying codes in trees, the following upper bound is known.

\begin{theorem}[\protect{\cite[Theorem 14]{NLG16}}]\label{LDt-TreeBounds}
If $T$ is a tree on $n\geq3$ vertices, then $\IDt(T)\leq \frac{3(n+\ell(T))}{5}$.
\end{theorem}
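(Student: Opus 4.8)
The plan is to prove the inequality by induction on $n$, phrasing it as a \emph{deficiency} inequality. For a tree $T$ set $\Phi(T)=5\IDt(T)-3n-3\ell(T)$; the claim is exactly $\Phi(T)\le 0$. First I would dispose of the small trees (say $3\le n\le 6$) by direct inspection; this also covers the degenerate configurations that arise when a reduction would leave fewer than three vertices. Here $P_3$ gives equality, $\Phi(P_3)=0$, and is the only identifiable tree of order~$3$.

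Before the main argument I would reduce to twin-free trees. In a tree the only twins are open twins formed by two leaves sharing a support vertex (closed twins would force a triangle or $K_2$). If some support vertex $v$ carries at least two pendant leaves, delete one such leaf $u$ to get $T'$, so that $n'=n-1$ and $\ell'=\ell-1$. Extending an optimal code of $T'$ by $u$ (using that $v$ is already a codeword) shows $\IDt(T)\le\IDt(T')+1$, hence with $\Delta_{\IDt}:=\IDt(T)-\IDt(T')\le 1$ we get $\Phi(T)-\Phi(T')=5\Delta_{\IDt}-3(n-n')-3(\ell-\ell')=5\Delta_{\IDt}-6\le -1$. By induction $\Phi(T')\le 0$, so $\Phi(T)\le 0$. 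Thus I may assume every support vertex has exactly one leaf, i.e.\ $T$ is twin-free and $\ell(T)=s(T)$.

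For the inductive step on a twin-free tree, root $T$ at one endpoint $x_d$ of a longest path $x_0x_1\cdots x_d$. Then $x_0$ is a deepest leaf, its support $x_1$ has degree~$2$, and every child-subtree of the grandparent $x_2$ has height at most~$1$; by twin-freeness each such child is either a single leaf (at most one, else twins) or a degree-$2$ support carrying one pendant leaf (a pendant $P_2$, like $x_1$). I would then split into a bounded list of local configurations according to the children of $x_2$ (and, when $\deg(x_2)=2$, of its parent $x_3$): several pendant $P_2$'s at $x_2$; a pendant $P_2$ together with a leaf-child at $x_2$; a single pendant $P_2$ making $x_2x_1x_0$ a pendant $P_3$; and so on. In each configuration I delete a subtree $S$ of two to five vertices hanging at $x_2$ (or $x_3$), apply induction to $T-S$ (checking it is again a tree on at least three vertices, otherwise falling back to a base case), and rebuild a total dominating identifying code of $T$ from an optimal one of $T-S$ by adding a controlled number of codewords on $S$. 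Each case then reduces to the numerical inequality $5\,\Delta_{\IDt}\le 3(\Delta_n+\Delta_\ell)$, where $\Delta$ denotes the change of the indicated quantity between $T$ and $T-S$.

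The delicate point — and the main obstacle — is total domination at the cut vertex. Removing a pendant $P_2$ forces its support into the code, but that support then needs a codeword neighbour, which is \emph{not} guaranteed to be inherited from the optimal code of $T-S$; naively repairing this costs a second codeword and violates $\Delta_{\IDt}\le 1$. The resolution is to choose $S$ and the attachment point so that the neighbour providing total domination of the cut vertex is already present (for instance by deleting a whole pendant $P_3$, or by cutting one level higher at $x_3$ and absorbing $x_2$ into the code), and to treat the few configurations where $x_2$ or $x_3$ has degree~$2$ by tailored small deletions, invoking if convenient the bound $\IDt(T)\le n-s(T)$ of Lemma~\ref{LemSupportBip}. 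Keeping the per-case bookkeeping of $\Delta_n$, $\Delta_\ell$ and $\Delta_{\IDt}$ everywhere compatible with $5\Delta_{\IDt}\le 3(\Delta_n+\Delta_\ell)$ is precisely where the constant $3/5$ is forced, and is the part demanding the most care.
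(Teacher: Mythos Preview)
The paper does not contain a proof of this statement: Theorem~\ref{LDt-TreeBounds} is quoted as an external result from~\cite{NLG16} and used as a black box (together with Lemma~\ref{LemSupportBip}) to deduce Corollary~\ref{cor:IDtBound}. There is therefore nothing in the present paper to compare your proposal against.

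As for the proposal itself, what you describe is a plan, not a proof. The twin-reduction step is correct and cleanly argued, and the deficiency reformulation $\Phi(T)=5\IDt(T)-3n-3\ell(T)\le 0$ is the natural way to organise an induction of this type. But the heart of the argument --- the finite case analysis at $x_2$ and $x_3$ after rooting at the end of a longest path --- is only announced, not carried out. You yourself isolate the real obstacle: when you delete a pendant substructure, the cut vertex may lose its codeword neighbour, and repairing total domination naively costs an extra codeword, which breaks the budget $5\Delta_{\IDt}\le 3(\Delta_n+\Delta_\ell)$. Your suggestions for how to handle this (delete a larger piece, cut one level higher, invoke Lemma~\ref{LemSupportBip}) are plausible heuristics, but until each local configuration is written out with explicit values of $(\Delta_n,\Delta_\ell,\Delta_{\IDt})$ and the identification/total-domination checks done, there is no proof --- only a promissory note that the cases will close. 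The original proof in~\cite{NLG16} does carry out such an analysis (with a somewhat different organisation), and the amount of casework there indicates that the ``bounded list of local configurations'' is not as short as your sketch suggests.
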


This upper bound, together with Lemma~\ref{LemSupportBip}, yields the following corollary. 

\begin{corollary}\label{cor:IDtBound}
If $T$ is a twin-free tree on at least $n\geq 3$ vertices, then $$\IDt(T)\leq 3n/4.$$
\end{corollary}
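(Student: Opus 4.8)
The plan is to combine the two quoted upper bounds, Lemma~\ref{LemSupportBip} and Theorem~\ref{LDt-TreeBounds}, and to play them off against each other according to how many leaves the tree has. The key structural observation I would establish first is that in a twin-free tree the number of leaves equals the number of support vertices, i.e. $\ell(T)=s(T)$. Indeed, in any tree each leaf has a unique neighbour, namely its support vertex; if some support vertex had two leaf neighbours, those two leaves would share the same open neighbourhood and hence be open twins, contradicting twin-freeness. Thus each support vertex carries exactly one leaf, the leaf-to-support map is a bijection, and $\ell(T)=s(T)$. Write $k$ for this common value.

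With this in hand, the two bounds read $\IDt(T)\le n-k$ (from Lemma~\ref{LemSupportBip}, valid once $T\neq P_4$ and $n\ge 4$) and $\IDt(T)\le \tfrac{3(n+k)}{5}$ (from Theorem~\ref{LDt-TreeBounds}). I would then split on the value of $k$ relative to the threshold $n/4$, which is exactly where the two linear bounds cross (solving $n-k=\tfrac{3(n+k)}{5}$ gives $k=n/4$, at which point both equal $3n/4$). If $k\ge n/4$, the leaf-heavy bound gives $\IDt(T)\le n-k\le n-\tfrac{n}{4}=\tfrac{3n}{4}$. If $k< n/4$, the leaf-light bound gives $\IDt(T)\le \tfrac{3(n+k)}{5}< \tfrac{3(n+n/4)}{5}=\tfrac{3n}{4}$. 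In either case the desired inequality follows.

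It remains to dispose of the degenerate cases. For $n=3$ the statement is vacuous, since the only tree on three vertices is $P_3$, whose two leaves are open twins, so no twin-free tree of order $3$ exists; hence effectively $n\ge 4$. The one genuine exception is $P_4$, which is twin-free but excluded from Lemma~\ref{LemSupportBip}; here I would either verify directly that $\IDt(P_4)=3=\tfrac{3n}{4}$, or simply note that Theorem~\ref{LDt-TreeBounds} already yields $\IDt(P_4)\le \tfrac{3\cdot 6}{5}=\tfrac{18}{5}$, so by integrality $\IDt(P_4)\le 3=\tfrac{3n}{4}$.

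The argument is short, and I do not anticipate a serious obstacle; the only points requiring care are the boundary bookkeeping items—confirming the identity $\ell(T)=s(T)$ for twin-free trees, and handling $P_4$, which is precisely the tree on which the leaf-heavy bound $n-s(T)$ is unavailable. Everything else reduces to an elementary comparison of the two linear bounds.
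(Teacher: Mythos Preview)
Your argument is correct and follows essentially the same approach as the paper's own proof: use twin-freeness to get $\ell(T)=s(T)$, then split at the threshold $\ell(T)=n/4$ and apply Lemma~\ref{LemSupportBip} or Theorem~\ref{LDt-TreeBounds} accordingly. You are in fact slightly more careful than the paper, which omits the $P_4$ and $n=3$ boundary checks you spell out.
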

\begin{proof}
Since $G$ is twin-free, we have $s(T)=\ell(T)$, thus by Lemma~\ref{LemSupportBip} we have $\ID(T)\leq n-\ell(T)$. Thus, if $\ell(T)\geq\frac{n}{4}$, we are done. On the other hand, if $\ell(T)<\frac{n}{4}$, by Theorem~\ref{LDt-TreeBounds}, we have $\ID(T)\leq 3(n+\ell(T))/5<\frac{3n}{4}$. 
\end{proof}

Observe that we have $\IDt(C_6)=4>(3\cdot6)/5$ and hence, one cannot generalize the bound $\IDt(T)\leq3(n+\ell(T))/5$ to a class of twin-free graphs including 6-cycles. However, we can generalize Corollary~\ref{cor:IDtBound} to all twin-free graphs of girth at least~5 by finding a small total dominating identifying code in a well chosen sub-tree.

\begin{theorem}\label{the:IDtbipart}
If $G$ is a connected twin-free graph of girth at least $5$ on $n\geq 3$ vertices, then $$\IDt(G)\leq 3n/4.$$
\end{theorem}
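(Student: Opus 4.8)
The plan is to reduce the problem to trees, where the bound is already available through Corollary~\ref{cor:IDtBound}, by passing to a spanning tree of $G$ and transferring a total dominating identifying code back to $G$. The heart of the argument is the following transfer principle, which is where the girth hypothesis does all the work: \emph{for every spanning tree $T$ of $G$, every total dominating identifying code of $T$ is also one of $G$}, so that $\IDt(G)\le\IDt(T)$. Total domination and coverage transfer for free since $E(T)\subseteq E(G)$. For separation I would argue as follows. Suppose $C$ separates all pairs in $T$ but leaves some pair $u\ne v$ unseparated in $G$, i.e.\ $N_G[u]\cap C=N_G[v]\cap C=:S$. As $N_T[\cdot]\subseteq N_G[\cdot]$, both $N_T[u]\cap C$ and $N_T[v]\cap C$ lie in $S$ and, being distinct, some codeword $c$ lies in one but not the other; say $c\in(N_T[u]\cap C)\setminus N_T[v]$. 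Then $c\in S\subseteq N_G[v]$, so $cv\in E(G)\setminus E(T)$.

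I would then split on whether $c=u$. If $c=u$, then $uv\in E(G)\setminus E(T)$, so by triangle-freeness $u,v$ have no common neighbour; but total domination of $v$ in $T$ gives a tree-neighbour $c'\in C$ of $v$, and $c'\in S\subseteq N_G[u]$ (with $c'\ne u$ since $uv\notin E(T)$, and $c'\ne v$) makes $c'$ a common $G$-neighbour of the adjacent pair $u,v$, i.e.\ a triangle --- a contradiction. If $c\ne u$, then $c$ is a common $G$-neighbour of $u$ and $v$ with $cu\in E(T)$ and $cv\notin E(T)$; again total domination of $v$ in $T$ yields a tree-neighbour $c'\in C\cap N_G[u]$ of $v$, necessarily distinct from $c$ (as $cv\notin E(T)$) and from $u$ (otherwise $uv\in E(T)$ would make $c$ a triangle with $u,v$), so $c'$ is a \emph{second} common $G$-neighbour and $u\,c\,v\,c'$ is a $4$-cycle, contradicting girth at least $5$. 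Thus no such pair exists and $C$ separates $G$. Note that this is the only place both no-triangle and no-$4$-cycle are used, and both are essential.

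Given the transfer principle, it remains to exhibit a spanning tree $T$ with $\IDt(T)\le 3n/4$. I would obtain it from a spanning tree in which no two leaves share a neighbour: such a tree is twin-free (for $n\ge3$ the only closed-twin tree is $P_2$, and open twins in a tree are exactly two leaves with a common neighbour), so Corollary~\ref{cor:IDtBound} applies --- equivalently, $\IDt(T)\le\min\{\,n-s(T),\ \tfrac{3(n+\ell(T))}{5}\,\}$ from Lemma~\ref{LemSupportBip} and Theorem~\ref{LDt-TreeBounds}, together with $s(T)=\ell(T)$, gives $\IDt(T)\le 3n/4$. Combined with the transfer principle this yields $\IDt(G)\le\IDt(T)\le 3n/4$.

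The main obstacle is therefore the purely structural existence claim: \emph{every connected twin-free graph of girth at least $5$ on $n\ge3$ vertices admits a spanning tree in which no two leaves share a neighbour.} I would prove it by starting from a spanning tree $T$ with the fewest leaves and doing local surgery. If two leaves $a,b$ of $T$ share a neighbour $z$, then since $G$ is twin-free not both $a,b$ can have degree $1$ in $G$, so (say) $a$ has a neighbour $w\ne z$ in $G$; replacing the tree edge $za$ by $aw$ keeps $T$ spanning and keeps $a$ a leaf. Here girth at least $5$ ensures $w\ne z$ and that the swap creates no short cycle, while leaf-minimality forces $w$ to be internal (otherwise the swap would strictly reduce the number of leaves). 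Making the termination rigorous --- via a secondary potential counting pairs of leaves with a common neighbour, choosing the reroute target so as to decrease it --- is the delicate part; the few degenerate small cases ($n\in\{3,4\}$, or $T\cong P_4$) satisfy $\IDt\le 3n/4$ by direct inspection. Thus the transfer principle is clean, and essentially all the remaining difficulty is concentrated in engineering this well-chosen spanning tree.
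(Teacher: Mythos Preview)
Your transfer principle --- that any total dominating identifying code of a spanning tree of $G$ is one of $G$ when $G$ has girth at least~5 --- is correct, and your case split on whether $c=u$ is a clean way to see it. This is also how the paper argues in its first paragraph.

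The gap is exactly where you flag it: the existence of a twin-free spanning tree. Your leaf-minimality potential does not finish the job. After rerouting $a$ from $z$ to an internal vertex $w$, if $w$ already has a leaf child in $T$ you have merely moved a twin, not eliminated one; the number of leaves is unchanged and the number of bad pairs need not drop. In fact, if one instead minimises the number of twins (as the paper does), the same swap shows that \emph{every} alternative $G$-neighbour $w$ of such a leaf $a$ must already be a support vertex of $T$ --- so the swap is stuck, and a secondary potential does not obviously exist. Whether a twin-free spanning tree always exists under these hypotheses is therefore a genuine open point in your argument, not a routine wrap-up.

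The paper circumvents the question rather than answering it. It takes a spanning tree $T$ with the fewest twins; if $T$ still has twins, it does \emph{not} try to repair $T$, but deletes all surplus leaves to obtain a twin-free subtree $T'$ on $n'<n$ vertices, applies Corollary~\ref{cor:IDtBound} to $T'$ to get a code $C$ of size at most $3n'/4$, and notes that $S(T')\subseteq C$. The swap argument is then used only to conclude that every deleted leaf $u$ has its other $G$-neighbours in $S(T')\subseteq C$; hence $|I_G(C;u)|\geq 2$, and the girth hypothesis makes $u$ uniquely distinguished. So the very obstruction that blocks your termination --- that the reroute target is always a support vertex --- is precisely what the paper exploits to show that $C$ already works in $G$ without any further surgery.
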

\begin{proof}
Observe first that if $G$ has a twin-free spanning tree $T$, then $T$ has a total-dominating identifying code of size at most $3n/4$ by Corollary~\ref{cor:IDtBound}. Moreover, since $G$ does not have any triangles or $4$-cycles, one can check that the same code is also total dominating identifying in $G$. 

Assume that every spanning tree of $G$ has some twins, that is, leaves with the same adjacent support vertex, and assume that $T$ is the spanning tree with the least amount of twins among all spanning trees of $G$. Now, for each support vertex, we remove all but one adjacent leaf and we denote by $T'$ the resulting twin-free tree and say that it has $n'$ vertices. Notice that $n'\geq4$. Indeed, if $n'\leq 3$, then $T$ does not contain a $P_4$ as a subgraph. Since $T$ is connected, it is a star. However because $n\geq4$ and $G$ is twin- and triangle-free, we get a contradiction. Thus, $n'\geq4$. Now, $\IDt(T')\leq 3n'/4$ by Corollary~\ref{cor:IDtBound}. Let $C$ be a total dominating identifying code in $T'$ of at most size $3n'/4$; observe that $S(T')\subseteq C$, since each leaf needs a neighbour in $C$ in order to be totally dominated.

If $v\in S(T)$ is a support vertex which has $u,w\in N_T(v)\cap L(T)$, then we have removed either $u$ or $w$ to form $T'$. However, since $G$ is twin-free, $u$ or $w$, say $u$, has some other neighbours in $G$. Consider $x\in N_G(u)$. Observe that since $T$ had the minimal number of twins among all the spanning trees, we have $x\in S(T')$. Indeed, otherwise $T-uv+xu$ would have at least one twin less than $T$, a contradiction. However, then, $x\in C$ and hence, $|I_G(C;u)|\geq2$ and because $G$ has no 4-cycles, $u$ is uniquely distinguished. Thus, $C$ is a total dominating identifying code of $G$ with cardinality at most $3n'/4<3n/4$.
\end{proof}

Note that Theorem~\ref{the:IDtbipart} cannot hold for graphs that contain twins (because of complete bipartite graphs, for which the total identifying code number is $n-2$ or $n-1$) or triangles (because of complements of half-graphs, for which the (total dominating) identifying code number is $n-1$~\cite{FGKNPV11} as seen in Proposition~\ref{prop:extremal-family-IDtn-1}). However, in the following corollary, we give a generalized form for all connected graphs of girth at least $5$.

\begin{corollary}\label{cor:g5Generalized}
If $G$ is a connected graph of girth at least $5$ on $n\geq 3$ vertices, then $$\IDt(G)\leq (3n+\ell(G)-s(G))/4.$$
\end{corollary}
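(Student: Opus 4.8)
The goal is to bound $\IDt(G)$ for a connected graph $G$ of girth at least $5$, where now twins (specifically, leaves sharing a support vertex) are permitted, which is exactly the obstruction that Theorem~\ref{the:IDtbipart} had to exclude. The plan is to reduce to the twin-free case by deleting all but one leaf at each support vertex, apply Theorem~\ref{the:IDtbipart} to the reduced graph, and then account for the leaves that were removed. Concretely, I would form a graph $H$ from $G$ by keeping, for each support vertex, exactly one of its adjacent leaves and deleting the rest; the deleted vertices number $\ell(G)-s(G)$, since each of the $s(G)$ support vertices retains one leaf. The resulting $H$ is connected, still has girth at least $5$, and is twin-free (deleting the redundant leaves removes precisely the closed-twin pairs, and girth $\geq 5$ rules out any other twins), so Theorem~\ref{the:IDtbipart} gives $\IDt(H)\leq 3|V(H)|/4 = 3(n-\ell(G)+s(G))/4$.

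The second step is to extend a total dominating identifying code $C$ of $H$ to one of $G$ by simply adding back all the deleted leaves. Each deleted leaf $u$ shares its support vertex $v$ with the retained leaf, so $u$ is dominated by $v\in C$ (note $v\in S(H)\subseteq C$, since support vertices must be coded to totally dominate their leaves), making the enlarged set total dominating. For separation, each added leaf $u$ is itself a codeword with $I(u)\supseteq\{u,v\}$, and since all these leaves are pendant at the same $v$ and $G$ has girth $\geq 5$, their $I$-sets differ from one another (each contains its own private vertex) and from every other vertex's $I$-set; the retained structure from $C$ keeps the rest of $H$ separated, and the new leaves cannot collide with old vertices because of the girth restriction. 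This yields a total dominating identifying code of $G$ of size $\IDt(H) + (\ell(G)-s(G)) \leq 3(n-\ell(G)+s(G))/4 + (\ell(G)-s(G)) = (3n+\ell(G)-s(G))/4$, as desired.

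I expect the main obstacle to be the clean verification that adding back the redundant leaves as codewords does not break separation, and in particular handling the degenerate boundary cases: when $\ell(G)=s(G)$ the graph is already twin-free and the statement is just Theorem~\ref{the:IDtbipart}, while one must also confirm that $H$ remains large enough (in particular $|V(H)|\geq 3$, or reduces to a small case handled directly) for Theorem~\ref{the:IDtbipart} to apply — an argument essentially identical to the $n'\geq 4$ check in the proof of Theorem~\ref{the:IDtbipart}. A subtle point worth stating carefully is why every support vertex of $H$ lies in the code $C$ and why the multiple pendant leaves at a common support vertex have pairwise distinct $I$-sets; both follow from the pendant leaves each contributing themselves as a codeword, but it is cleanest to spell out that two leaves $u_1,u_2$ at the same support $v$ satisfy $u_1\in I(u_1)\setminus I(u_2)$. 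Once these routine separations are dispatched, the counting is immediate and the bound follows.
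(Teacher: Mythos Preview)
Your proposal is correct and follows essentially the same approach as the paper's proof: delete redundant leaves to obtain a twin-free subgraph, apply Theorem~\ref{the:IDtbipart} to it, and extend the resulting code by adding back the deleted leaves (using $S(H)\subseteq C$ for total domination and the girth condition for separation). The paper handles the degenerate case by treating stars separately up front (where $H$ would drop to order~$2$), which is exactly the small-case check you anticipate.
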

\begin{proof}
Let $G$ be a connected graph of girth at least $5$ on $n\geq 3$ vertices. If $G$ is a star, then $(3n+\ell(G)-s(G))/4=n-1/2$ and the claim holds. Assume then that $G$ is not a star. Notice that if we have any twins, then they are leaves with the same adjacent support vertex. Denote by $G'$ the graph obtained from $G$ by removing leaves until $G'$ is twin-free and let $G'$ have order $n'$. Since $G$ is not a star, we have $n'\geq3$ and thus by Theorem~\ref{cor:g5Generalized}, $\IDt(G')\leq 3n'/4$. Let $C'$ be an optimal total dominating identifying code in $G'$. We have $S(G')\subseteq C'$. Thus, we may construct a total dominating identifying code $C$ for $G$ as $C=C'\cup(L(G)\setminus L(G'))$. We have $|C|\leq 3n'/4+(\ell(G)-\ell(G'))=3(n-\ell(G)+s(G))/4+(\ell(G)-s(G))=(3n+\ell(G)-s(G))/4$.\end{proof}

\begin{remark}
Theorem \ref{the:IDtbipart} improves the known upper bound for usual identifying codes in connected twin-free graphs of girth at least $5$ when $\ell(G)> n/8$. Indeed, the current best known upper bound for such graphs is $\ID(G)\leq (5n+2\ell(G))/7$, \cite{ourIDpaper}. When $\ell(G)\geq n/8$, we have $(5n+2\ell(G))/7\geq 3n/4$. Furthermore, Corollary~\ref{cor:g5Generalized} improves the bound when $s(G)>n/(a+7)$ where $\ell(G)=a\cdot s(G)$ and $a\geq1$ is a constant. 
\end{remark}

\medskip

Consider now some graphs which actually attain the $\frac{3n}{4}$ upper bound. In~\cite{NLG16}, the authors have shown that if  $\IDt(T)=3(n+\ell(T))/5$, then $T\in \mathcal{T}$, where $\mathcal{T}$ is defined with the following iterative process. Let $T_0=P_8$ and let there exist four different \emph{statuses} of vertices, $A$, $B$, $C$ and $D$, denoted by $s(v)$ for vertex $v$. For $T_0$, leaves have status $C$, support vertices status $A$, non-leaf vertices adjacent to support vertices status $B$ and the remaining two vertices have status $D$. Now, we can create a tree $T_i$ from a tree $T_{i-1}\in \mathcal{T}$ by applying either of two operations $\phi_1$ or $\phi_2$. 

In operation $\phi_1$, we add a path $P_5$ to $T_{i-1}$, with vertices $y,z,u,v,w$, where the consecutive vertices have an edge between them, with an edge between $y$ and any vertex $x\in V(T_{i-1})$ with $s(x)=C$. Moreover, we have statuses $s(y)=D$, $s(z)=D$, $s(u)=B$, $s(v)=A$ and $s(w)=C$.

In operation $\phi_2$, we add  path $P_4$ to $T_{i-1}$, with vertices $y,z,u,v$, where the consecutive vertices have an edge between them, with an edge between $y$ and any vertex $x\in V(T_{i-1})$ with $s(x)=D$. Moreover, we have statuses $s(y)=D$, $s(z)=B$, $s(u)=A$ and $s(v)=C$.

For a graph $H$, the \emph{$3$-corona} of $H$ is the graph of order~$4|V(H)|$ obtained from $H$ by adding a vertex-disjoint copy of a path $P_3$ for each vertex $v$ of $H$ and adding an edge joining $v$ to one end of the added path (see~\cite{ourIDpaper} and~\cite[Section 1.3]{bookTD}).

Since a twin-free tree $T$ on $n\geq8$ vertices can attain the upper bound in Corollary~\ref{cor:IDtBound} only when $T\in \mathcal{T}$ and $s(T)=n/4$, we can construct $T$ from $T_0$ by iteratively applying operation $\phi_2$. Moreover, this is equivalent with saying that $T$ is the $3$-corona of some tree $H$ on at least two vertices where $s(v)=D$ if $v\in V(H)$. This leads to the following theorem (noticing that the path $P_4$ is also an example).


\begin{theorem}\label{the:3coronas}
If $T$ is a twin-free tree on $n\geq4$ vertices with $\IDt(T)=3n/4$, then $T$ is the $3$-corona of some tree $H$.
\end{theorem}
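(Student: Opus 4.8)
The plan is to combine the two upper bounds behind Corollary~\ref{cor:IDtBound} with the characterization of the equality case of Theorem~\ref{LDt-TreeBounds} from~\cite{NLG16}, and then to read off the $3$-corona structure from the generation rule for the extremal trees in $\mathcal{T}$.

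First I would dispose of the trivial cases. Since $\IDt(T)=3n/4$ is an integer, $4$ divides $n$; for $n=4$ the only twin-free tree is $P_4$, which is the $3$-corona of the single-vertex tree, so the claim holds and I may assume $n\geq 8$ (in particular $T\neq P_4$). As $T$ is twin-free, $s(T)=\ell(T)$, so Lemma~\ref{LemSupportBip} gives $\IDt(T)\leq n-\ell(T)$ and Theorem~\ref{LDt-TreeBounds} gives $\IDt(T)\leq 3(n+\ell(T))/5$. The first bound is at least $3n/4$ exactly when $\ell(T)\leq n/4$, and the second exactly when $\ell(T)\geq n/4$; since $\IDt(T)=3n/4$ exceeds neither, both must hold and $\ell(T)=n/4$. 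In particular $\IDt(T)=3(n+\ell(T))/5$, so $T$ realizes equality in Theorem~\ref{LDt-TreeBounds} and hence $T\in\mathcal{T}$ by~\cite{NLG16}.

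Next I would show that membership in $\mathcal{T}$ together with $\ell(T)=n/4$ forces $T$ to be built from $T_0=P_8$ using $\phi_2$ alone. Suppose $T$ arises by $p$ applications of $\phi_1$ and $q$ of $\phi_2$. Since the statuses are assigned once and never change, and $\phi_1$, $\phi_2$ add $5$ and $4$ vertices respectively, we get $n=8+5p+4q$. Each operation creates exactly one vertex of status $C$, so $T$ has $2+p+q$ such vertices; a short check shows that every vertex of status $A$, $B$ or $D$ always has degree at least~$2$, so the leaves of $T$ are precisely the $C$-vertices that never received a $\phi_1$-attachment. Writing $d\geq 0$ for the number of $C$-vertices to which some $\phi_1$ was attached, this gives $\ell(T)=2+p+q-d$. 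Equating with $\ell(T)=n/4=2+\tfrac{5p}{4}+q$ yields $d=-\tfrac{p}{4}$, which together with $d\geq 0$ and $p\geq 0$ forces $p=0$; hence only $\phi_2$ is used.

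Finally I would translate the $\phi_2$-only construction into a $3$-corona. The status-$D$ vertices induce a tree $H$: they begin as the edge $a_4a_5$ of $P_8$, and each $\phi_2$ attaches one new $D$-vertex to an existing one, keeping this set connected. Each $\phi_2$ equips its new $D$-vertex with a pendant path whose statuses are $B$, $A$, $C$ in order, that is, a private $P_3$, and the two original $D$-vertices of $P_8$ already carry such pendant $P_3$'s, namely $a_4a_3a_2a_1$ and $a_5a_6a_7a_8$; since $p=0$, every non-$D$ vertex lies in one of these pendant $P_3$'s. Thus $T$ is obtained from $H$ by attaching a private $P_3$ to each of its vertices, which is exactly the $3$-corona of $H$. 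The hard part is the counting step: one must set up the status bookkeeping carefully enough to allow several $\phi_1$'s to attach to a common $C$-vertex (so that the true leaf count is $2+p+q-d$ rather than $2+q$), because it is precisely the sign of this correction that excludes $\phi_1$ and thereby forces the $3$-corona structure.
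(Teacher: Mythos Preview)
Your proposal is correct and follows essentially the same route as the paper: deduce $\ell(T)=n/4$ from the two upper bounds, invoke the equality characterization $T\in\mathcal{T}$ from~\cite{NLG16}, conclude that only $\phi_2$ is used, and read off the $3$-corona structure from the $D$-vertices. The paper states the last two implications without justification, whereas you supply the explicit leaf-count $\ell(T)=2+p+q-d$ and the resulting equation $d=-p/4$ that forces $p=0$; this is a genuine (and welcome) filling-in of details the paper leaves to the reader.
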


Observe that we cannot generalize Theorem~\ref{the:3coronas} to all twin-free graphs of girth at least~5, since $\IDt(C_8)=6$, but we do not know if there exist other counterexamples. However, we can deduce from the proof of Theorem~\ref{the:IDtbipart} that if some other counterexample exists, then that graph has only $3$-coronas as its twin-free spanning trees.

\section{Bounds between related parameters}\label{sec:bounds-related}

In this section we prove bounds relating the parameter $\IDt$ to similar parameters. Tight bounds relating the parameters $\ID$, $\LD$ and $\OLD$ were provided in the literature. It was indeed proved in~\cite{GKM08} that for any identifiable graph $G$, $\ID(G)\leq 2\LD(G)$ holds (and is tight). Similar bounds were proved in the PhD thesis~\cite[Chapter~2.4.1]{SewellPhD}, showing that $\ID(G)\leq 2\OLD(G)$, $\OLD(G)\leq 2\LD(G)$ and $\OLD(G)\leq 2\ID(G)$, and providing tight families of examples for each bound. As we will see, we can also bound $\IDt(G)$ by a constant times $\LD(G)$, $\LDt(G)$ and $\ID(G)$, but not exactly by a factor of $2$ like in the other bounds. 
We have presented some relationships between these types of codes in Figure~\ref{Dominating codes}. Thus, we have $\IDt(G)\leq \IDe(G)$ and $\IDt(G)\leq \IDs(G)$. As we will see, we cannot get similar constant type bounds for these parameters.

\subsection{Relation with (classic) identifying codes}

\begin{theorem}\label{IDt<2ID-2}
Let $G$ be a connected graph with $\ID(G)\geq3$, then $$\IDt(G)\leq2\ID(G)-2.$$
\end{theorem}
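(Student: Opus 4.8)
The plan is to start from a minimum identifying code $C$ with $|C|=\ID(G)=k\ge 3$ and to repair its failure to be total dominating while spending as few extra codewords as possible. The first thing I would record is that enlarging a separating code keeps it separating: a codeword witnessing $I(u)\neq I(v)$ still lies in the closed neighbourhood of exactly one of $u,v$ after more codewords are added, so any superset of $C$ is automatically identifying, and the whole game is about total domination. Since $C$ is already dominating, the only vertices lacking a codeword \emph{neighbour} are the isolated codewords $B=\{c\in C: N(c)\cap C=\varnothing\}$. A useful preliminary observation is that every edge of $G$ inside $C$ joins two vertices of $C\setminus B$, so $C\setminus B$ has minimum degree at least $1$ within itself and hence $|C\setminus B|\neq 1$; equivalently $|B|\neq k-1$. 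Thus either $|B|\le k-2$, or $|B|=k$ (i.e.\ $C$ is independent).

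In the first case $|B|\le k-2$ I would simply attach to each isolated codeword one of its neighbours. The resulting set $C'$ has size at most $k+(k-2)=2k-2$, is identifying (a superset of $C$), and is total dominating: each isolated codeword now has a codeword neighbour, while each newly added vertex is dominated by the isolated codeword for which it was chosen. This settles everything except the independent case.

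The independent case $|B|=k$ is the heart of the matter, and here a naive repair fails: for the subdivided star (subdivide each edge of $K_{1,k-1}$ once), the minimum code $C$ consisting of the centre and the $k-1$ tips is independent, and keeping it forces one to add the $k-1$ subdivision vertices, reaching only $2k-1$. The structural fact I would exploit is that, because $C$ is independent and identifying, every codeword has the singleton $I$-set $I(c)=\{c\}$, so no \emph{non}-codeword may have a singleton $I$-set; consequently every non-codeword has at least two neighbours in $C$. Now take an inclusion-minimal set $D\subseteq V(G)\setminus C$ that dominates $C$. Minimality gives each $d\in D$ a private codeword (one dominated only by $d$), yielding an injection $D\to C$; if $|D|=k$ this injection is a bijection, but any single $d$ has two codeword-neighbours that would then both have to be its unique private codeword, a contradiction. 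Hence $|D|\le k-1$, and $C\cup D$ is a total dominating identifying code of size at most $2k-1$ (it is total dominating since $D$ dominates $C$, each $d\in D$ has a codeword-neighbour, and each remaining non-codeword has two).

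It then remains to shave off one vertex, and the plan is to delete a single codeword of $C$ from $C\cup D$. Total domination always survives such a deletion: every non-codeword retains one of its (at least two) codeword-neighbours, every codeword retains its dominator in $D$, and the deleted vertex itself keeps a neighbour in $D$. So the bound reduces to showing that some codeword of $C$ is not the unique separator of any pair in $C\cup D$. Here the ``at least two neighbours'' property is again decisive: I would first rule out any pair $\{c,y\}$ separated only by $c$, because unravelling $I(c)=I(y)\cup\{c\}$ forces $y$ to be a non-codeword with no codeword-neighbour, which is impossible; this confines every potential obstruction to an almost-twin configuration $I(x)=I(y)\cup\{c\}$ with $x$ a non-codeword neighbour of $c$. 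The main obstacle, and the step I would spend the most care on, is to show that, using the slack that only $k-1$ vertices of $D$ must dominate $k$ codewords, one can always choose the deleted codeword (or, if necessary, choose $D$ and the deleted codeword jointly) so that no such almost-twin pair is left unseparated; this produces a total dominating identifying code of size at most $2k-2=2\ID(G)-2$.
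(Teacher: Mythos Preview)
Your setup is clean and the dichotomy $|B|\le k-2$ versus $|B|=k$ is correct; the case $|B|\le k-2$ is fine, and in the independent case your bound $|D|\le k-1$ (via private neighbours and the fact that every non-codeword has at least two codeword neighbours) correctly yields a total dominating identifying code of size at most $2k-1$. The problem is the final paragraph. You explicitly label the deletion step as ``the main obstacle'' and ``the step I would spend the most care on'', and then do not carry it out. What remains to be shown is that for \emph{some} choice of $D$ and \emph{some} $c\in C$, no pair $\{x,y\}$ has $c$ as its unique separator in $C\cup D$. You have correctly ruled out pairs containing $c$ itself, but the almost-twin configuration $N[x]\cap(C\cup D)=(N[y]\cap(C\cup D))\cup\{c\}$ with $x\in N(c)\setminus C$ is not dealt with at all, and the vague appeal to ``the slack that only $k-1$ vertices of $D$ must dominate $k$ codewords'' is not an argument. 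This is a genuine gap: it is precisely the place where the bound improves from $2k-1$ to $2k-2$, and nothing you have written forces the existence of a deletable codeword.

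The paper avoids this difficulty by a different route. After reducing to the independent case (as you do), it does not try to delete a codeword from $C\cup D$. Instead it observes that if any non-codeword has at least three codeword neighbours, or any two non-codewords together have at least four distinct codeword neighbours, then adding one or two well-chosen non-codewords already totally dominates enough of $C$ to finish within $2k-2$. In the residual case every non-codeword has exactly two codeword neighbours and any two non-codewords share one; contracting each non-codeword to an edge on $C$ gives a simple graph with $|C|$ vertices and at least $|C|$ edges, hence a cycle, and the shared-neighbour condition forces this graph to be a triangle on three vertices. This pins down $\ID(G)=3$, $n=6$, and the remaining finite case is checked by hand. So the paper replaces your unfinished deletion argument by a structural reduction that terminates in a single small case. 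If you want to push your own line through, you would need to supply a proof that a deletable codeword always exists; at present that is missing.
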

\begin{proof}
Assume that $C$ is an optimal identifying code in $G$ with cardinality at least~3. Since $G$ is connected and $C$ is identifying, if $I(c)=\{c\}$ for some codeword $c\in C$, then we may add any adjacent non-codeword to $C$ and vertex $c$ becomes totally dominated. Since at least the first non-codeword we add to the code can be chosen to connect two codewords (indeed the non-codeword cannot be only dominated by the codeword), we immediately get that we need at most $|C|-1$ vertices in this process, and so $\IDt(G)\leq 2\ID(G)-1$. 

Observe that if any two codewords are adjacent in $C$, then there is also a third codeword adjacent to one of them, that distinguishes them. Thus, we need at most $|C|-3+|C|=2|C|-3$ codewords to form a total dominating identifying code and we are done. Hence, we assume from now on that every optimal identifying code in $G$ has only isolated codewords. Therefore, each non-codeword has at least two adjacent codewords. Observe that if any non-codeword $x$ has three or more adjacent codewords, then we are done by adding vertex $x$ to the code and then proceeding as in the first step. Likewise, if a pair of non-codewords together has four or more distinct adjacent codewords, we can proceed similarly. Thus, we may now assume that this does not occur.

Moreover, we may assume that $2|C|\leq n$. Otherwise, the claim follows from Proposition~\ref{Prop:IDtn-1}. Now, consider the bipartite graph $B$ obtained from $G$ by keeping only the edges between the codewords and non-codewords. If we contract non-codewords into edges (recall that each non-codeword has degree~2 in $B$) of $B$ to obtain a graph $G'$, since $2|C|\leq n$, we notice that we have $|C|$ vertices and at least $|C|$ edges in $G'$. Moreover, graph $B$ could not have a $4$-cycle since $C$ is an identifying code and hence, we do not contract non-codewords into parallel edges. Thus, the resulting graph $G'$ has a cycle of length at least~3. If this cycle has at least four vertices, then there existed two non-codewords in $G$ with at least two distinct adjacent codewords each (this corresponds to a matching of size~2 in $G'$), a contradiction, and the claim follows. The same is true, if the cycle is a triangle and there exist any other vertex in $G'$ (then here also, $G'$ contains a matching of size~2). 

Finally, we are left with the case where $\ID(G)=3$. Moreover, by the previous argumentation, the only case we need to consider is the one where each codeword has degree~2, is adjacent to exactly two non-codewords, and there is a total of six vertices. If none of the non-codewords are adjacent in $G$, then $G$ is the cycle $C_6$ which has $\IDt(C_6)=4$, and we are done. Thus, we may assume that there is an edge between some non-codewords. However, now we can find an induced path $P_4$ $c_1,u,v,c_2$ which starts with $c_1\in C$, has non-codewords $u$ and $v$ as the middle vertices and ends with $c_2\in C$. The path is induced due to the properties of codeword vertices. We claim that these four vertices form a total dominating identifying code $C'$. Observe that the single vertex $w$ which belongs to neither $C$ nor $C'$ is the only vertex with $I$-set $\{c_1,c_2\}$ and the single vertex in $C\setminus C'$ is the only vertex which is not adjacent to either codeword in $C\cap C'$. Finally, the vertices in $C'$ are pairwise separated since $G[C']$ is a 4-path. 
\end{proof}

The upper bound of Theorem~\ref{IDt<2ID-2} is tight for $1$-coronas of complete graphs from which we remove a single leaf.

\subsection{Relation with locating-total dominating sets}

\begin{theorem}\label{thm:bund-IDtLDt}
If $G$ is a connected identifiable graph on at least three vertices, then
$$\LDt(G)\leq \IDt(G)\leq 2\LDt(G).$$
\end{theorem}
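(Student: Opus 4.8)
The left inequality is essentially by definition: any total dominating identifying code is in particular a total dominating set that separates every pair of vertices, so it separates every pair of non-codewords and is therefore a locating-total dominating set (this is precisely the arc TID $\to$ TLD in Figure~\ref{Dominating codes}). Hence $\LDt(G)\le\IDt(G)$, and all the work lies in the upper bound $\IDt(G)\le 2\LDt(G)$. The plan for the upper bound is to start from a minimum locating-total dominating set $C$ (so $|C|=\LDt(G)$) and augment it into a total dominating identifying code by adding at most $|C|$ new vertices. The first thing I would record is a monotonicity principle: adding any vertex to a code can never merge two previously distinct $I$-sets, so one may add separating vertices greedily without ever destroying a separation already achieved, and the resulting set stays total dominating because it contains $C$.

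Next I would analyse the partition of $V(G)$ into classes of vertices sharing the same $I$-set with respect to $C$. Singleton classes need no attention. Because $C$ is locating, each non-singleton class $K$ contains at most one non-codeword, and hence at least $|K|-1$ codewords. The key local observation is that if $u,v$ lie in the same class $K$, then $N[u]\cap C=N[v]\cap C$, so $(N[u]\triangle N[v])\cap C=\emptyset$; since $G$ is identifiable we have $N[u]\ne N[v]$, and therefore every vertex separating $u$ from $v$ automatically lies outside $C$. Consequently, for a fixed class $K$ the closed neighbourhoods $\{N[v]:v\in K\}$ are pairwise distinct subsets that agree on $C$, and a standard greedy distinguishing argument (repeatedly adding a vertex of $N[u]\triangle N[v]\setminus C$ for some still-unseparated pair $u,v\in K$, each addition strictly increasing the number of distinct signatures) produces a set $Z_K\subseteq V(G)\setminus C$ with $|Z_K|\le |K|-1$ whose addition makes all members of $K$ pairwise distinguished.

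Finally I would assemble the bound. Let $Z$ be the union of the sets $Z_K$ over all non-singleton classes; then $D=C\cup Z$ is total dominating (it contains $C$) and, by monotonicity together with the shattering of every class, it is separating, hence a total dominating identifying code. For the cardinality, since each non-singleton class satisfies $|K|-1\le(\text{number of codewords in }K)$ and the classes partition $V(G)$, summing yields $|Z|\le\sum_K(|K|-1)\le|C|$, whence $\IDt(G)\le|D|\le 2|C|=2\LDt(G)$.

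The step I expect to require the most care is the counting that ties the number of added vertices back to $|C|$: it hinges on combining the locating property (at most one non-codeword per class, giving $|K|-1\le$ the number of codewords in $K$) with the greedy distinguishing bound $|Z_K|\le|K|-1$, and on the observation that all separating vertices fall outside $C$, so that the sets $Z_K$ consist of genuinely new vertices rather than elements already counted in $C$. I would also double-check the small-degenerate cases (a class of the form $\{c,w\}$ with one codeword and one non-codeword, and the non-emptiness $I(v)\ne\emptyset$ guaranteed by total domination) to make sure the per-class budget is never exceeded.
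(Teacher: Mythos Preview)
Your argument is correct and is essentially the same approach as the paper's: the paper obtains the upper bound by quoting the construction from~\cite{GKM08} (stated there for locating-dominating sets) that extends any locating(-total) dominating set $C'$ to an identifying code $C\supseteq C'$ with $|C|\le 2|C'|$, and then notes that total domination is inherited from $C'$; what you have written is precisely a self-contained version of that greedy extension. Your write-up has the advantage of being independent of the external reference and of making the per-class budget $|Z_K|\le|K|-1\le|K\cap C|$ explicit, while the paper's version is shorter but relies on the reader looking up the cited proof.
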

\begin{proof}
In \cite[Proof of Theorem $8$]{GM07}, the authors have shown that if $D'$ is a locating-dominating set in $G$, then there exists an identifying code $D$ such that $D'\subseteq D$ and $|D|\leq 2|D'|$. 

Assume now that $C'$ is an optimal locating-total dominating set in graph $G$. Thus, $C'$ is a locating-dominating set and hence, there exists an identifying code $C$ such that $C'\subseteq C$ of cardinality $ |C|\leq 2|C'|=2\LDt(G)$. Moreover, since $C'$ is total dominating, also $C$ is total dominating and hence, $\IDt(G)\leq 2\LDt(G)$ as we claimed.
\end{proof}

The upper bound from Theorem~\ref{thm:bund-IDtLDt} is tight for complete graphs of odd order from which we have removed a maximal matching, indeed for such a graph $G$ of order $n=2k+1$ we have $\IDt(G)=n-1=2k$ by Proposition~\ref{prop:extremal-family-IDtn-1} but $\LDt(G)=k$ (for every edge of the removed matching, the two endpoints are twins in $G$, so one of them must belong to any locating-dominating set and $\LDt(G)\geq k$; on the other hand, selecting one such vertex for each pair gives a locating-total dominating set of size $k$).

\subsection{Relation with locating-dominating sets}

We first relate the locating-total domination number with the usual location-domination number.

\begin{theorem}\label{thm:bound-LDt-LD}
If $G$ is a connected graph on at least three vertices, then
$$\LD(G)\leq \LDt(G)\leq 2\LD(G)-1.$$
\end{theorem}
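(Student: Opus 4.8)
The plan is to prove the two inequalities separately. The left inequality $\LD(G)\le\LDt(G)$ is immediate from the definitions: a locating-total dominating set is, by definition, locating-dominating and total dominating, so in particular it is a locating-dominating set, whence $\LD(G)\le\LDt(G)$. For the upper bound, I would start from a minimum locating-dominating set $D$ with $|D|=\LD(G)$ and enlarge it into a locating-total dominating set. The crucial preliminary observation is that \emph{adding} vertices to a locating-dominating set preserves the locating-dominating property: if $D\subseteq D'$ and $u,v\notin D'$ are distinct, then they lie outside $D$ as well, so some codeword of $D\subseteq D'$ already separates them, and domination is clear. Consequently, the only obstruction to $D$ being total dominating is the set $A$ of vertices of $D$ with no neighbour in $D$ (the isolated vertices of $G[D]$); every other vertex of $G$ already has a neighbour in $D$. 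Thus it suffices to add a set $W\subseteq V(G)\setminus D$ that totally dominates $A$ (note each $a\in A$, being isolated in $G[D]$ but not isolated in the connected graph $G$, has all its neighbours outside $D$). Then $D\cup W$ is locating-total dominating of size $|D|+|W|$, and the goal reduces to choosing $W$ with $|W|\le|D|-1$.

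A short counting argument then disposes of almost all cases. Picking one external neighbour for each vertex of $A$ gives a valid $W$ with $|W|\le|A|\le|D|$; so if $A\ne D$ we immediately get $|W|\le|A|\le|D|-1$ and are done. Likewise, if some external vertex is adjacent to two vertices of $A$, we may use it to cover both and cover the remaining vertices of $A$ privately, again yielding $|W|\le 1+(|D|-2)=|D|-1$. In either situation $D\cup W$ is a locating-total dominating set of size at most $2|D|-1=2\LD(G)-1$.

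I expect the main obstacle to be the single remaining case: $A=D$ (so $D$ is independent) and every external vertex is adjacent to \emph{exactly} one codeword. The heart of the argument is to show that this forces a rigid structure. The locating property makes distinct external vertices have distinct codeword-neighbours, which in turn forces each codeword to have exactly one external neighbour; combined with the independence of $D$ and the connectedness of $G$ (with $n\ge3$, so no isolated vertices), every codeword is a leaf, and $G$ is precisely the $1$-corona of the connected graph $H:=G[\,V(G)\setminus D\,]$ on $|D|$ vertices. In this case I would discard $D\cup W$ and instead take the support vertices $V(H)$ directly: since $H$ is connected on at least two vertices it has minimum degree at least~$1$, so $V(H)$ totally dominates itself and the attached leaves, and it is locating because each leaf $\ell$ satisfies $I(\ell)=\{\text{its support}\}$, all distinct. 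This gives a locating-total dominating set of size $|V(H)|=|D|\le 2\LD(G)-1$, which completes the proof in the last case and hence overall.
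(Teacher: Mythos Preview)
Your proof is correct and follows the same overall strategy as the paper: start from an optimal locating-dominating set $D$ and augment it to a total dominating set while keeping the locating property. Both arguments hinge on the observation that a single added non-codeword may cover two codewords at once, saving one vertex compared to the naive $2\LD(G)$ bound.

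Where you diverge is in the treatment of the degenerate case. The paper argues that whenever $\LD(G)\ge 3$ there is a non-codeword $u$ with $|I(u)|\ge 2$, adds $u$ first, and then covers the remaining codewords one by one; the case $\LD(G)=2$ is dispatched separately by observing that if every non-codeword has $|I(u)|=1$ then $G=P_4$. You instead perform a structural case analysis: if no external vertex has two code-neighbours and $A=D$, you show that $G$ is forced to be the $1$-corona of a connected graph $H$ on $|D|\ge 2$ vertices, and then you \emph{switch codes} entirely, taking $V(H)$ as a locating-total dominating set of size $|D|=\LD(G)$. This is a cleaner and more explicit handling of the boundary situation (and incidentally shows that in this case the stronger equality $\LDt(G)=\LD(G)$ holds), whereas the paper's reduction is terser. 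Both routes are valid; yours is a bit more self-contained.
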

\begin{proof}
Let $G$ be a connected graph and let $C$ be an optimal locating-dominating set in $G$. We can create a locating-total dominating set from $C$ by adding a codeword adjacent to each codeword in $C$. Thus, $\LDt(G)\leq 2\LD(G)$. However, since $G$ is connected, there exists a non-codeword $u$ with $|I(u)|\geq2$ if $\LD(G)\geq3$. Thus, we may add $u$ and $\LD(G)-2$ other vertices to the code and we get the claimed upper bound. Notice that $\LD(G)\geq2$, since we have at least three vertices in $G$. Moreover, if $\LD(G)=2$ and we have $|I(u)|=1$ for each vertex in $V(G)$, then $G$ is a path on four vertices. However, $\LDt(P_4)\leq \IDt(P_4)= 3$ and the claim follows.
\end{proof}

The upper bound from Theorem~\ref{thm:bound-LDt-LD} is tight for stars which have all but one of their edges subdivided once. Indeed, for such a tree $T_k$ of order $n=3k+2$, we have $\LD(T_k)=k+1$ and $\LDt(T_k)=2k+1$. For each leaf of $T_k$, either the leaf or its support vertex must be in any dominating set to dominate the leaf,  so $\LD(T_k)\geq k+1$. Moreover, to get a total dominating set, we need two vertices in each branch which has three vertices and the central vertex, so $\LDt(T_k)\geq 2k+1$. On the other hand, taking every support vertex gives a locating-dominating set of size $k+1$. Taking every support vertex together with its degree~2 neighbour gives a locating-total dominating set of size $2k+1$.

\medskip

Notice that Theorems \ref{IDt<2ID-2}, \ref{thm:bund-IDtLDt} and \ref{thm:bound-LDt-LD} together with \cite[Theorem $8$]{GM07} and Figure \ref{Dominating codes} imply that $\IDt(G)\leq 2\ID(G)-2\leq 4\LD(G)-2 $ and that  $\IDt(G)\leq 2\LDt(G)\leq 4\LD(G)-2 $. However, as we can see in the following theorem, this bound is not tight.

\begin{theorem}\label{IDt<3LD-log}
If $G$ is a connected identifiable graph with $\LD(G)\geq 2$, then $$\IDt(G)\leq3\LD(G)-\log_2(\LD(G)+1).$$
\end{theorem}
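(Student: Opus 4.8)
The plan is to start from an optimal locating-dominating set $C$ with $|C|=\LD(G)=k\ge 2$ and to grow it into a total dominating identifying code by adding at most $2k-\log_2(k+1)$ further vertices, which gives exactly the claimed bound. The reason an LD-set is a good starting point is that it already does most of the work: every non-codeword is totally dominated by $C$ (being dominated but not in $C$, it has a neighbour in $C$), and by the locating property all non-codewords have pairwise distinct, non-empty $I$-sets. Moreover, any superset of $C$ is again locating-dominating, since for a surviving non-codeword $u$ one has $I_{C'}(u)\cap C=I_C(u)$, so distinctness is preserved. Hence the only defects to repair are: (i) codewords $c$ lacking a neighbour in the final code (a total-domination defect), and (ii) unseparated pairs, each of which must involve a codeword — either a codeword/codeword collision $I(c_1)=I(c_2)$ or a codeword/non-codeword collision $I(c)=I(u)$.

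First I would analyse the collisions. If a vertex $x$ has the same trace on $C$ as a codeword $c$, then $c\in I_C(x)$ forces $x\in N[c]$; thus all vertices sharing the value $I_C(c)$ lie in $N[c]$ and form a set of mutual twins within $C$, which (as $G$ is identifiable) are pairwise separated in $G$ by vertices outside $C$. The crude bookkeeping — add one private neighbour to each codeword that has none (repairing (i)) and one separating vertex for each collision class (repairing (ii)) — costs at most $2k$ additions and already yields $\IDt(G)\le 3k$, i.e.\ $3\LD(G)$ up to the logarithmic term. The crucial synergy I would exploit is that these two repairs need not be carried out separately: when a vertex $a$ is added to totally dominate a codeword $c$, it also enlarges $I(c)$ by $a$, so it can break a collision involving $c$ for free. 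This is why I would solve domination and separation \emph{jointly} rather than composing the bounds $\LDt(G)\le 2\LD(G)-1$ and $\IDt(G)\le 2\LDt(G)$, which only recover the weak estimate $4\LD(G)-2$.

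Extracting the saving of $\log_2(k+1)$ is where the real work lies and is the step I expect to be the main obstacle. The mechanism should be a benefit-maximizing (greedy) choice of added vertices, repeatedly selecting a vertex adjacent to as many still-defective codewords as possible, and a charging/potential argument to measure how the two repairs overlap. I expect a dichotomy: either some added vertex is adjacent to many codewords, dominating and simultaneously separating a large batch at once and producing a large one-shot saving, or all codewords share few common neighbours, so the defective codewords are ``spread out'' and a complementary estimate on the number of residual collisions applies; balancing the two regimes is precisely what converts the crude count into a gain on the order of $\log_2(k+1)$, via a telescoping/doubling behaviour of the rounds. The delicate points to verify are that greedily added vertices never create new codeword collisions (controlled by the fact that each added vertex lies in the closed neighbourhood of exactly the codewords it is meant to serve), that the final set is genuinely both totally dominating and separating, and that the estimate holds in the base range, in particular for $\LD(G)=2$ where the logarithm is nontrivial.
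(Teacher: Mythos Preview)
Your setup is correct and matches the paper: start from an optimal locating-dominating set $C_{LD}$ of size $k$, observe that only codeword-involving defects remain, and note that repairing identification and total domination separately costs at most $2k$ additions, giving the crude $\IDt(G)\le 3k$. But the theorem lives in the $\log_2(k+1)$ term, and here your proposal has a genuine gap: you describe a greedy scheme with a ``dichotomy'' and a ``telescoping/doubling'' analysis, but none of this is actually carried out, and you yourself flag it as the main obstacle. As stated, there is no argument producing the logarithmic saving.

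The paper's argument is considerably simpler than the greedy/potential machinery you anticipate, and it runs the synergy in the \emph{opposite} direction from the one you highlight. First repair identification: by the Gravier--Klasing--Moncel construction one may add a set $C_A\subseteq V(G)\setminus C_{LD}$ with $|C_A|\le k$ so that $C_{LD}\cup C_A$ is an identifying code. Now apply the locating property of $C_{LD}$ not to the whole graph but to the added vertices themselves: the vertices of $C_A$ are non-codewords of $C_{LD}$, so their traces $N[a]\cap C_{LD}$ are nonempty and pairwise distinct. If $C_D\subseteq C_{LD}$ denotes the set of codewords adjacent to some vertex of $C_A$, these $|C_A|$ distinct nonempty subsets of $C_D$ force $2^{|C_D|}-1\ge |C_A|$, i.e.\ $|C_D|\ge \log_2(|C_A|+1)$. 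Every vertex of $C_D$ is already totally dominated by $C_A$, so only the codewords in $C_{LD}\setminus C_D$ still need a neighbour, costing at most $k-|C_D|$ further additions. The total is
\[
k+|C_A|+(k-|C_D|)\ \le\ 2k+|C_A|-\log_2(|C_A|+1)\ \le\ 3k-\log_2(k+1),
\]
the last inequality using that $x\mapsto x-\log_2(x+1)$ is nondecreasing on $\{0,1,\dots,k\}$. The missing idea, then, is not a delicate greedy analysis but a one-line pigeonhole count on the traces of the identification-repair vertices.
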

\begin{proof}
Let $C_{LD}$ be an optimal locating-dominating set with at least two codewords in an identifiable connected graph $G$. We have $\ID(G)\leq 2|C_{LD}|$ by \cite[Theorem $8$]{GKM08}. Moreover, following the proof of~\cite{GKM08}, we may construct an identifying code from $C_{LD}$ by just adding at most $|C_{LD}|$ additional vertices to $C_{LD}$. Denote by $C_A$ a smallest set of vertices which we can add to $C_{LD}$ so that $C_A\cup C_{LD}=C_{ID}$ is an identifying code. Observe that every vertex in $C_A$ is adjacent to a vertex in $C_{LD}$ (since every vertex not in $C_{LD}$ is adjacent to a vertex in $C_{LD}$). Moreover, when we add the $|C_A|$ new codewords, those new codewords are also total dominating some (old) codewords of $C_{LD}$. We denote the dominated codewords by $C_D\subseteq C_{LD}$. In particular, we have $2^{|C_D|}-1\geq|C_A|$ since the vertices in $C_A$ were all dominated and separated among each other by the vertices of $C_D$. 

Therefore, to make $C_{ID}$ total dominating, it suffices to add only codewords which dominate vertices in $C_{LD}\setminus C_D$. Hence, we can build a total dominating identifying code of cardinality at most $|C_{LD}|+(|C_{LD}|-|C_D|)+|C_A|\leq 2|C_{LD}|+|C_A|-\log_2(|C_A|+1)\leq 3|C_{LD}|-\log_2(|C_{LD}|+1).$
\end{proof}

We can show that the bound of Theorem~\ref{IDt<3LD-log} is almost tight, as follows.

\begin{proposition}\label{Prop:IDt<3LD-log-tight}
For every integer $k\geq 2$, there is a connected identifiable graph $G_k$ with $\LD(G_k)=2^k-1$ and $\IDt(G_k)=3\cdot 2^k-2k-3=3\LD(G_k)-2\log_2(\LD(G_k)+1)$.
\end{proposition}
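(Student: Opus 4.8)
The plan is to construct an explicit family $G_k$ and to compute $\LD(G_k)$ and $\IDt(G_k)$ by hand, choosing $G_k$ so that the three steps in the proof of Theorem~\ref{IDt<3LD-log} — an optimal locating-dominating set, its extension to an identifying code, and the totalisation of that code — are all (nearly) forced. I would use a binary-addressing core $H=\{h_1,\dots,h_k\}$ of $k$ vertices together with, for each nonempty $S\subseteq\{1,\dots,k\}$, a pendant gadget attached to the core through the vertices $\{h_i:i\in S\}$ and carrying a private leaf; thus there are $2^k-1$ gadgets. The design goals are that (a) each private leaf forces a single codeword into any locating-dominating set, so that a minimum such set has size exactly $2^k-1$ and lives entirely in the gadgets, giving $\LD(G_k)=2^k-1$; and (b) the identification and total-domination requirements of the gadgets can be discharged through the shared core, where the key arithmetic fact is that $k$ core vertices address exactly $2^k-1$ nonempty subsets — matching the number of gadgets — which is precisely what produces the additive saving of $2k=2\log_2(\LD(G_k)+1)$ relative to the naive estimate.

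For the upper bounds I would exhibit the two codes explicitly. For $\LD(G_k)\le 2^k-1$, I take the set of gadget supports: it dominates every vertex and separates all non-codewords (leaves are recognised as private neighbours, and the core vertices by the distinct families of supports adjacent to them). For $\IDt(G_k)\le 3\cdot 2^k-2k-3$, I take the supports together with all $k$ core vertices and the remaining per-gadget vertices that total domination and separation force, and I check that once $h_1,\dots,h_k$ are all present, the separations and missing total-domination edges of every gadget are supplied through the core, so that the count collapses to $3(2^k-1)-2k$. Verifying that this set is both total dominating and separating is then a finite, routine computation.

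The bound $\LD(G_k)\ge 2^k-1$ is immediate, since the $2^k-1$ leaf/support pairs are vertex-disjoint and each contributes a codeword. The main obstacle is the matching lower bound $\IDt(G_k)\ge 3\cdot 2^k-2k-3$. Here I would argue that, in any total dominating identifying code, total domination forces each support, separation of a support from its leaf forces a second gadget vertex, and a further per-gadget obligation arises from totalising the support; each such obligation can be met either locally (at the cost of a private codeword) or globally through a core vertex addressing that gadget. The amount that can be saved by routing obligations through the core is then capped by a counting argument of the same type as the inequality $2^{|C_D|}-1\ge|C_A|$ in the proof of Theorem~\ref{IDt<3LD-log}: because $k$ core vertices can keep only $2^k-1$ gadget vertices pairwise separated, at most $2k$ codewords can be spared over the naive $3(2^k-1)$. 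I expect the delicate part to be ruling out alternative strategies — shifting codewords between a gadget and the core, or leaving the unique admissible non-codeword somewhere else — and this would be handled by a case analysis on how each core vertex is used, which is where most of the effort will go.
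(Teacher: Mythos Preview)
Your plan is under-specified at the one place that matters, and it points toward a lower-bound argument that is much harder than necessary. The paper's construction is engineered so that the $\IDt$ lower bound is a one-line forcing argument rather than a global optimisation.

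Concretely, in the paper's $G_k$ the core $A=\{a_1,\dots,a_k\}$ carries a private leaf $b_i$ on each $a_i$, and there is \emph{no} gadget for singleton subsets. For each $S\subseteq A$ with $|S|\ge 2$ (so $2^k-k-1$ gadgets) the gadget has four vertices $x_S,x'_S,y_S,z_S$: both $x_S$ and $x'_S$ are joined to all of $S$, and the internal edges are $x_Sx'_S$, $x_Sy_S$, $z_Sx_S$, $z_Sx'_S$, $z_Sy_S$. The locating-dominating set is $A\cup\{y_S:|S|\ge 2\}$, of size $k+(2^k-k-1)=2^k-1$; the matching lower bound holds because the closed neighbourhoods of the $y_S$'s and the $b_i$'s are pairwise disjoint. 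The point of this gadget shape is that $N[x_S]\triangle N[x'_S]=\{y_S\}$ and $N[y_S]\triangle N[z_S]=\{x'_S\}$, so $y_S$ and $x'_S$ are \emph{forced} into any identifying code as unique separators; total domination of $y_S$ then forces one of $x_S,z_S$, and total domination of $b_i$ forces $a_i$. That gives $\IDt(G_k)\ge 3(2^k-k-1)+k=3\cdot 2^k-2k-3$ immediately, with no case analysis and no counting of how many obligations a core vertex can absorb.

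Your route, by contrast, leaves the gadget undefined beyond ``pendant with a private leaf'' and then tries to bound the $\IDt$ lower bound by a $2^{|C_D|}-1\ge|C_A|$-style counting of how much can be ``routed through the core''. That inequality controls how many new codewords the core can pairwise separate; it does \emph{not} directly cap total-domination savings, and with only a leaf--support gadget the obligations you list are not per-gadget forcings at all (separating a support from its leaf, or totally dominating the support, can both be done by a single core vertex in $S$). Ruling out hybrid strategies---some leaves in the code, some roles met by the core, core vertices counted for several roles at once---is exactly the ``delicate part'' you flag, and without built-in unique separators there is no obvious reason the saving should stop at $2k$. The missing idea is to design the gadget so that three codewords per gadget are forced \emph{individually}, which turns the lower bound into bookkeeping.
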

\begin{proof}
We build $G_k$ as follows. $G_k$ contains a set $A=\{a_1,\ldots,a_k\}$ of $k$ vertices. 
For each vertex $a_i$ in $A$, we add a leaf $b_i$ adjacent to $a_i$. Moreover, for each subset $S$ of $A$ of size at least~2 (there are $2^k-k-1$ such sets), we have vertices $x_S$, $x'_S$, $y_S$ and $z_S$ with the following edges. Vertices $x_S$ and $x'_S$ have all the vertices $a_i$ with $a_i\in S$ as neighbours. Moreover, $x_S$ is adjacent to $x'_S$ and to $y_S$. Vertex $z_S$ is adjacent to $x_S$, $x'_S$ and $y_S$. See Figure~\ref{fig:IDt<3LD-log-tight} for an illustration.

To see that $\LD(G_k)\leq 2^k-1$, notice that the set consisting of $A$ and each vertex $y_S$ forms a locating-dominating set. Moreover we need at least $2^k-k-1$ vertices to dominate the vertices of type $y_S$, and $k$ vertices to dominate the vertices of type $b_i$, so $\LD(G_k)\geq 2^k-1$.

Next, observe that each vertex $y_S$ needs to be in any identifying code to separate $x_S$ from $x'_S$, for each subset $S$ of $A$ of size at least~2. We also need one of $z_S$ and $x_S$ to totally dominate $y_S$. Moreover, $x'_S$ must belong to the code in order to separate $y_S$ from $z_S$. Vertex $a_i$ must belong to the code to totally dominate $b_i$, for each $i$ in $\{1,\ldots,k\}$. Thus, $\IDt(G_k)\geq 3(2^k-k-1)+k=3\cdot 2^k-2k-3$. Finally, the set consisting of $A$, each vertex $y_S$, $z_S$, and $x'_S$ forms a total dominating identifying code, thus $\IDt(G_k)\leq 3\cdot 2^k-2k-3$.
\end{proof}

\begin{figure}[!htpb]
\centering
\begin{tikzpicture}

\node[small node,label={0:$x_{S_1}$}](xS1) at (0.5,2)    {};
\node[small node,fill=lightgray,label={180:$x'_{S_1}$}](x'S1) at (-0.5,2)    {};
\node[small node,fill,label={0:$y_{S_1}$}](yS1) at (0.5,3)    {};
\node[small node,fill=lightgray,label={180:$z_{S_1}$}](zS1) at (-0.5,3)    {};

\node[small node,label={0:$x_{S_2}$}](xS2) at (4.5,2)    {};
\node[small node,fill=lightgray,label={180:$x'_{S_2}$}](x'S2) at (3.5,2)    {};
\node[small node,fill,label={0:$y_{S_2}$}](yS2) at (4.5,3)    {};
\node[small node,fill=lightgray,label={180:$z_{S_2}$}](zS2) at (3.5,3)    {};

\node[small node,fill,label={180:$a_1$}](a1) at (0,0)    {};
\node[small node,label={180:$b_1$}](b1) at (0,-1)    {};
\node[small node,fill,label={180:$a_2$}](a2) at (1,0)    {};
\node[small node,label={180:$b_2$}](b2) at (1,-1)    {};
\node[small node,fill,label={180:$a_k$}](ak) at (4,0)    {};
\node[small node,label={180:$b_k$}](bk) at (4,-1)    {};
\node (dots1) at (2.25,-0.75)    {$\ldots$};
\node (dots2) at (2,2.5)    {$\ldots$};
\node[draw,dashed,rectangle,rounded corners,minimum width=5.5cm,minimum height=1cm,line width=0.5mm] (r) at (1.8,0) {};
\node (A) at (5,0)    {$A$};
\node[draw,ellipse,densely dotted,minimum width=2cm,minimum height=0.7cm] (e1) at (1.25,0) {};
\node (A) at (1.5,0)    {\textcolor{gray}{$S_1$}};
\node[draw,ellipse,densely dotted,minimum width=2.5cm,minimum height=0.7cm] (e2) at (3,0) {};
\node (A) at (2.75,0)    {\textcolor{gray}{$S_2$}};

\path[draw,thick]
    (a1) edge node {} (b1)
    (a2) edge node {} (b2)
    (ak) edge node {} (bk)
    (xS1) edge node {} (x'S1)
    (yS1) edge node {} (xS1)
    (yS1) edge node {} (zS1)
    (xS1) edge node {} (zS1)
    (x'S1) edge node {} (zS1)
    (x'S1) edge node {} (0.25,0)
    (xS1) edge node {} (0.25,0)
    (x'S1) edge node {} (2.25,0)
    (xS1) edge node {} (2.25,0)
    (xS2) edge node {} (x'S2)
    (yS2) edge node {} (xS2)
    (yS2) edge node {} (zS2)
    (xS2) edge node {} (zS2)
    (x'S2) edge node {} (zS2)    
    (x'S2) edge node {} (1.75,0)
    (xS2) edge node {} (1.75,0)
    (x'S2) edge node {} (4.25,0)
    (xS2) edge node {} (4.25,0)
    ;

\end{tikzpicture}\centering
\caption{Sketch of the graph $G_k$ built in Proposition~\ref{Prop:IDt<3LD-log-tight}, where $S_1$ and $S_2$ are two subsets of $A$ of size at least~2. The black vertices form an optimal locating-dominating set, while the black and gray vertices together form an optimal total dominating identifying code.}\label{fig:IDt<3LD-log-tight}
\end{figure}

\subsection{Relations with self-identifying and error-correcting identifying codes}

In the following two propositions, we show that there does not exist general bounds of types $\IDs(G)\leq c\IDt(G)$ or $\IDe(G)\leq c\IDt(G)$ for any constant $c$. In fact, the constructions we give, offer (almost) the largest possible gaps between any $\IDt$ and any other parameter. Indeed, if a graph has $2^k-1$ vertices, then there are at least $k$ vertices in any total dominating identifying code in graph $G$. Hence, the values of $\IDe(G)$ or $\IDs(G)$ alone tell almost nothing about the value of $\IDt(G)$.

\begin{proposition}\label{thm:bound-IDt-SID}
Let $k\geq4$ be an even integer. There exists a connected graph $G$ with $\IDt(G)=k$ and $\IDs(G)=2^k-2$.
\end{proposition}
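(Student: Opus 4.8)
The plan is to exhibit an explicit graph $G$ in which a logarithmic-size code already totally dominates and identifies everything, while any self-identifying code is forced to swallow almost all vertices. The engine for the lower bound on $\IDs$ is the following \textbf{forcing principle}, which I would isolate first: if $v$ is a vertex for which there is a distinct vertex $u$ with $N(v)\subseteq N[u]$, then $v$ belongs to \emph{every} self-identifying code. Indeed, if $v\notin C$ then $I(v)=N(v)\cap C\subseteq N[u]\cap C=I(u)$, so $I(v)\setminus I(u)=\emptyset$, contradicting self-identification; in particular, any two open twins force each other into $C$. This refines the observation already made in the introduction (a codeword $c$ with $I(c)=\{c\}$ is swallowed by any neighbour), and it is the only property of self-identifying codes I expect to need for the hard (lower-bound) direction.

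For the construction I would address $\Theta(2^k)$ vertices with a $k$-element code, exploiting that $k$ is even to make that code total dominating. Concretely, take address vertices $A=\{a_1,\dots,a_k\}$ inducing the complete graph minus a perfect matching $a_1a_2,\dots,a_{k-1}a_k$, so that $N[a_m]\cap A=A\setminus\{a_{\sigma(m)}\}$ (where $a_{\sigma(m)}$ is the matched partner) are pairwise distinct and each $a_m$ has $k-2\ge 2$ neighbours inside $A$; then, for each subset $S\subseteq[k]$ from a suitable range of sizes, attach a vertex $v_S$ joined exactly to $\{a_i:i\in S\}$. With $C=A$, every $v_S$ gets the distinct nonempty $I$-set $S$, every $a_m$ gets $A\setminus\{a_{\sigma(m)}\}$, and every vertex has a neighbour in $A$; hence $A$ is a total dominating identifying code and $\IDt(G)\le k$. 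The matching lower bound $\IDt(G)\ge k$ is the information-theoretic counting bound: a total dominating identifying code of size $c$ injects the vertices into the $2^{c}-1$ nonempty subsets of the code, so $2^{c}-1\ge n$ forces $c\ge k$ once $n$ is chosen around $2^{k}-1$.

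For $\IDs$ I would run the forcing principle on the gadget vertices. A vertex $v_S$ with $S$ not maximal is forced through a superset vertex, since $N(v_S)=\{a_i:i\in S\}\subseteq N(v_{S\cup\{j\}})\subseteq N[v_{S\cup\{j\}}]$; the top-level gadgets are forced instead through the matching structure of $A$, as $N(v_{[k]\setminus\{j\}})=A\setminus\{a_j\}=N[a_{\sigma(j)}]\cap A\subseteq N[a_{\sigma(j)}]$. Tuning which subset sizes are present (and, if needed, attaching a pendant gadget to each $a_i$ to force the address vertices as well, since a leaf $\ell_i$ is forced and drags $a_i$ into the code to dominate it) should make \emph{exactly} $2^{k}-2$ vertices unavoidable, giving $\IDs(G)\ge 2^{k}-2$. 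For the matching upper bound I would exhibit the forced set itself as a self-identifying code and verify the two defining properties: that it is total dominating (every forced vertex must have a forced neighbour) and that its $I$-sets form an antichain (each pair separated in the strong sense $I(x)\setminus I(y)\ne\emptyset$).

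The main obstacle is precisely reconciling these three demands simultaneously. The subset vertices are naturally adjacent only to $A$, so on their own they are isolated inside any code built from them — violating the total-domination requirement for a self-identifying code — which tends to drag the $k$ address vertices in and overshoot the target $2^{k}-2$, or, conversely, to let an address vertex be omitted and undershoot it. I therefore expect the delicate part to be the fine-tuning of the vertex set and the auxiliary gadgets so that the forced family is itself total dominating, its $I$-sets are pairwise incomparable, the count lands exactly on $2^{k}-2$, and no smaller self-identifying code survives — all while the extra edges and gadgets leave the $I$-sets relative to $A$ (and hence $\IDt=k$) untouched. Once the construction is pinned down, the four verifications (the two $\IDt$ bounds and the two $\IDs$ bounds) are routine given the forcing principle and the counting bound.
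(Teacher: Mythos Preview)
Your scaffold is exactly the paper's: address set $A$ inducing $K_k$ minus a perfect matching, subset gadgets $v_S$ attached to $\{a_i:i\in S\}$, and the forcing principle ``$N(v)\subseteq N[u]\Rightarrow v\in C$'' to push vertices into any self-identifying code. Where you stop is the crucial step, and the fix you gesture at (pendant gadgets, pruning subset sizes) does not work.

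The missing idea is a \emph{complement-pairing} among the subset vertices. In the paper's graph one takes the subsets $X'\subseteq X$ with $1\le|X'|\le k-2$ (so $|V(G)|=k+(2^k-k-2)=2^k-2$ on the nose), and adds one extra edge per subset vertex: the vertex for $X'$ is joined to the vertex for $X\setminus X'$; the $k$ singleton vertices are paired among themselves via $\{x_{2i+1}\}\leftrightarrow\{x_{2i+2}\}$, and the perfect matching removed from $K_k$ is chosen \emph{differently} so that each such pair sits on adjacent address vertices. These extra edges are invisible to $C=X$ (so $\IDt=k$ is unaffected), but they are what makes the graph admit a self-identifying code at all. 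Without them your graph fails the existence test: take any $S$ containing some $m$ but not its matched partner $\sigma(m)$; then $N[v_S]=\{v_S\}\cup\{a_i:i\in S\}\subseteq N[a_m]$, so $I(v_S)\setminus I(a_m)=\emptyset$ for \emph{every} code $C$, and $\IDs(G)$ is undefined. Restricting to subsets that are unions of matched pairs avoids this but leaves only $2^{k/2}$ gadgets, nowhere near $2^k$. Pendant leaves do not help either: a leaf $\ell$ attached to $a_i$ has $N[\ell]=\{\ell,a_i\}\subseteq N[a_i]$, again killing self-identifiability.

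With the complement edge, each subset vertex $u$ now has a private neighbour $v\notin N[a_m]$ (since $v$'s $X$-neighbourhood is disjoint from $u$'s), so $N[u]\not\subseteq N[w]$ for any $w$, and $V(G)$ \emph{is} a self-identifying code. The lower bound $\IDs(G)\ge |V(G)|$ is then your forcing principle applied vertex by vertex: deleting $x_i$ collapses $I(u)\subseteq\{u,v\}\subseteq I(v)$ for the singleton vertex $u$ attached to $x_i$; deleting a subset vertex $u$ collapses $I(u)\subseteq N(u)\subseteq N[x_j]$ for a suitable $x_j$. So the paper lands on $\IDs(G)=|V(G)|=2^k-2$ directly, rather than aiming at a proper subset as you anticipated; this sidesteps the ``forced family must itself be total dominating and antichain'' verification that you flagged as the hard part.
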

\begin{proof}
Let $k\geq4$ be an even integer. We construct graph $G$ in the following way. We start from a complete graph $K_k$ on vertex set $X=\{x_1,\dots, x_k\}$. After that we create $2^k-k-2$ vertices and join each to a distinct subset $X'\subseteq X$ of vertices of cardinality $1\leq |X'|\leq k-2$. Then, we join any vertex $u$ with $N(u)=X'$ to the single vertex $v$ with $N(v)=X\setminus X'$. If $\deg(u)=1$ and $N(u)=\{x_{2i+1}\}$, then we join it to vertex $v$ with $N(v)=\{x_{2i+2}\}$ where $0\leq i\leq k/2-1$. Finally, we remove a perfect matching $\{x_1x_k,x_2x_3,x_4x_5,\dots, x_{k-2}x_{k-1}\}$ from the vertices within the clique $K_k$.

Observe that $C=X$ forms an optimal total dominating identifying code. Moreover, since no vertex has its closed neighbourhood completely included in another neighbourhood, set $V(G)$ is a self-identifying code. We claim that it is also optimal. Suppose on the contrary that $C$ is a self-identifying code of smaller cardinality. Assume first that some vertex $x_{i}\not\in C$. There exists a vertex $u$ with $N(u)=\{x_{i},v\}$. Now, $I(u)\subseteq\{u,v\}\subseteq I(v)$ and hence, $C$ is not a self-identifying code. Assume then that some vertex $u\not\in X$ is a non-codeword. There is vertex $v\in N(u)\setminus X$. Assume first that $N(u)=\{v,x_i\}$. Let $x_j\in N(v)$. Now $I(u)\subseteq\{x_i,v\}\subseteq I(x_j)$. Moreover, if $N(u)=X'\cup\{v\}$, then there exists a vertex $x_j\in X\setminus X'$ with $X\cup\{v\}\subseteq I(x_j)$, a contradiction. Hence, $V(G)$ is an optimal self-identifying code and the claim follows.  
\end{proof}

In the following proposition, we consider the possible gap between total dominating identifying codes and error-correcting identifying codes.

\begin{proposition}\label{thm:bound-IDt-EID}
Let $k\geq4$ be an integer. There exists a connected graph $G$ with $\IDt(G)=k$ and $\IDe(G)=2^k-1$.
\end{proposition}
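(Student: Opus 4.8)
The plan is to mimic the construction and proof strategy of Proposition~\ref{thm:bound-IDt-SID}, adapting it to produce a large \emph{error-correcting} identifying code requirement while keeping $\IDt(G)=k$. The key insight is that an error-correcting identifying code demands $|I(u)|\geq 3$ for every vertex $u$ and $|I(u)\triangle I(v)|\geq 3$ for all distinct $u,v$; these are much stronger local conditions than self-identification, and they are precisely what will force nearly every vertex into any such code. As in the $\IDs$ construction, I would start from a clique $K_k$ on $X=\{x_1,\dots,x_k\}$ and attach, for a suitable collection of subsets $X'\subseteq X$, a gadget vertex whose neighbourhood is (essentially) $X'$, arranging things so that $C=X$ remains an optimal total dominating identifying code of size exactly $k$. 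The count $2^k-1$ (as opposed to $2^k-2$ in the $\IDs$ case) suggests that here \emph{all} vertices will be forced into the error-correcting code, with no single exceptional omission allowed.

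First I would describe the explicit graph $G$: take the clique on $X$, and for each relevant subset $X'$ of $X$ create a vertex joined to $X'$, with some auxiliary adjacencies (pairing a vertex of neighbourhood $X'$ with one of neighbourhood $X\setminus X'$, and handling degree-one gadgets specially) so that the closed-neighbourhood structure is rich enough to make $X$ identifying and total dominating but no vertex has $|I(X;v)|<3$ creating obstructions. I would then verify $\IDt(G)\leq k$ by checking that $C=X$ is total dominating (each $x_i$ has a clique-neighbour, each gadget vertex is dominated by its $X'$-neighbours) and separating (distinct subsets $X'$ give distinct $I$-sets). The matching lower bound $\IDt(G)\geq k$ follows because $G$ has roughly $2^k$ vertices and any total dominating identifying code on $n$ vertices has size at least $\log_2$ of the number of vertices it must separate; more directly, one argues that fewer than $k$ codewords cannot even separate all $2^k-k-\dots$ gadget vertices, since their $I$-sets are determined by intersections with $X$.

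The core of the argument is the lower bound $\IDe(G)\geq 2^k-1$, i.e. that every vertex except possibly one must lie in any error-correcting identifying code $C$. The plan is to show that $V(G)$ is itself error-correcting (checking $|N[v]\cap V(G)|\geq 3$ and pairwise $|N[u]\triangle N[v]|\geq 3$ for the constructed adjacencies), and then to argue optimality by contradiction: if two distinct vertices $u,w$ were both absent from $C$, I would exhibit a pair of vertices whose symmetric difference of $I$-sets drops below $3$, or a vertex with $|I(v)|\leq 2$, exploiting that gadget vertices have small degree and that removing two codewords from a low-degree region violates the error-correcting thresholds. The subset/complement pairing is designed exactly so that omitting any gadget vertex together with one of its neighbours collapses the required separation.

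The main obstacle I anticipate is the bookkeeping in the lower bound: error-correcting codes tolerate one ``error'' worth of slack, so I must carefully check that the gadget adjacencies are tight enough that \emph{no} two omissions are simultaneously safe, and in particular that the degree-one gadgets (paired as $x_{2i+1}\leftrightarrow x_{2i+2}$) do not accidentally create a vertex whose $I$-set could shrink below the threshold of~$3$ even when all of $V(G)$ is taken — this is presumably why the construction removes or adds specific matching edges. Verifying the triangle-inequality-type conditions $|I(u)\triangle I(v)|\geq 3$ across the three vertex types (clique vertices, paired gadget vertices, and small-neighbourhood gadgets) is the delicate, case-heavy step, and I would organize it by the size of $|N(u)\cap X|$ so that each case reduces to a short count.
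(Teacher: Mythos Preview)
Your plan has a concrete mismatch between what you say you will prove and what your argument actually delivers. You correctly note that the target $\IDe(G)=2^k-1$ means \emph{every} vertex must be forced into any error-correcting identifying code (assuming your graph has $2^k-1$ vertices). But your proposed lower-bound argument is: ``if two distinct vertices $u,w$ were both absent from $C$, I would exhibit a violation.'' That only yields $\IDe(G)\geq n-1$, not $\IDe(G)=n$. To reach $2^k-1$ you must show that omitting any \emph{single} vertex already breaks one of the conditions $|I(\cdot)|\geq 3$ or $|I(\cdot)\triangle I(\cdot)|\geq 3$. The paper does exactly this: for each vertex type it exhibits, upon removing that one vertex, either a vertex whose $I$-set drops to size at most~2 or a pair whose symmetric difference drops to size at most~2.

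There is also a structural divergence: you propose to start from the clique $K_k$ with a removed matching, mirroring the $\IDs$ construction. The paper instead starts from the \emph{path} $P_k$ on $X=\{x_1,\dots,x_k\}$, adds a vertex for each nonempty $X'\subseteq X$ not of the form $N[x_i]\cap X$ (giving $2^k-k-1$ extra vertices, so $n=2^k-1$), and then connects the all-of-$X$ vertex $w$ to every singleton-neighbourhood vertex. The path base is what makes the ``single omission'' arguments clean: for a gadget $v$ with $|N(v)\cap X|\geq 3$, one can always find $x_i\in N(v)\cap X$ such that $X'=(N(v)\cap X)\setminus\{x_i\}$ is \emph{not} an interval $N[x_j]\cap X$, so the corresponding gadget $v'$ exists and $I(v)\triangle I(v')\subseteq\{v',x_i\}$. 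Your clique-minus-matching base does not obviously support this step, and your complement-pairing idea (inherited from the $\IDs$ proof) is tuned to the inclusion condition of self-identification, not to the symmetric-difference condition of error-correction. If you want to pursue the clique route you would need a new gadget design and a genuinely different case analysis; as written, the plan does not get there.
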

\begin{proof}
Let $k\geq4$ be an integer. We construct graph $G$ in the following way. We start from a path $P_k$ on vertex set $X=\{x_1,\dots, x_k\}$. After that we create a set $Y$ of $2^k-k-1$ vertices and join each to a distinct nonempty subset $X'\subseteq X$ of vertices such that $N[x_i]\cap X\neq X'$ for any $x_i$. After this, we add edges between the vertex $w$ of $Y$ joined to all vertices of $X$, and each vertex of $Y$ joined to the vertices of some set $X'\subseteq X$ with $|X'|=1$.

Observe that code $C=X$ is an optimal total dominating identifying code in $G$. Moreover, we claim that $V(G)$ is an optimal error-correcting identifying code in $G$. Let $C$ be an optimal error-correcting identifying code in $G$. Notice that for each $i$ ($1\leq i\leq k$), $x_i\in C$. Indeed, otherwise some vertex $u$ in $Y$ with $N(u)=\{x_i,x_j\}$ would have $|I(u)|\leq2$, a contradiction. Moreover, if a vertex $u$ with $|N(u)\cap X|\leq2$ is a non-codeword, then $|I(u)|\leq2$, a contradiction. Similarly, if the vertex $w$ of $Y$ with $N(w)=X$ is a non-codeword, then each vertex $u$ of $Y$ with $|N(u)\cap X|=1$ has $|I(u)|\leq2$. Finally, suppose by contradiction that some vertex $v$ of $Y$ with $|N(v)\cap X|\geq 3$ is a non-codeword. Then, we can find a vertex $x_i$ of $N(v)\cap X$ such that there exists a vertex $v'$ of $Y$ with $N(v')\cap X=(N(v)\cap X)\setminus\{x_i\}$. Then, $I(v)\triangle I(v')\subseteq \{v',x_i\}$, a contradiction. Thus, $V(G)$ is an optimal error-correcting identifying code.
\end{proof}

Observe that the construction in Proposition~\ref{thm:bound-IDt-EID} is best possible since any graph on $2^k-1$ vertices has at least $k$ vertices in any identifying code. 

\section{Concluding remarks}\label{sec:conclu}

We have characterized the extremal graphs for total dominating identifying codes (that is, those graphs $G$ of order $n$ for which $\IDt(G)\in\{n-1,n\}$), extending the existing characterization for usual identifying codes from~\cite{FGKNPV11}. All these graphs either have twins or cycles of lengths~3 and~4; in the absence of these features, we showed that the graph has a relatively small total dominating identifying code, since $\IDt(G)\leq 3n/4$.

It would be interesting to characterize the graphs for which the $\IDt(G)\leq 3n/4$ upper bound for twin-free graphs of girth at least~5 of Theorem~\ref{the:IDtbipart} is tight, extending the characterization obtained for trees (Theorem~\ref{the:3coronas}). Is $C_8$ the only other tight example besides the $3$-coronas?

Perhaps it is possible to extend the $3n/4$ bound from girth~5 graphs to some twin-free triangle-free graphs with 4-cycles (we need the triangle-free restriction because of complements of half-graphs, and the twin-free restriction because of stars).


We note that several known bounds for twin-free trees are tight only for coronas, like the $3n/4$ bound for total dominating identifying codes of Corollary~\ref{cor:IDtBound}, the $2n/3$ bound for identifying codes from~\cite{ourIDpaper}, the $2n/3$ bound for locating-total dominating sets from~\cite{FH16} and $n/2$ bound for dominating sets~\cite{FJKR85, PX82}. An exception is the $n/2$ upper bound for locating-dominating sets, see~\cite{Heia}, for which the class of trees reaching the bound is more intricate. 

We also introduced multiple tight bounds for $\IDt$ based on other domination parameters. However, in the case of locating-dominating sets, we still have a gap in the logarithmic term between the bound in Theorem~\ref{IDt<3LD-log} and the construction in Proposition~\ref{Prop:IDt<3LD-log-tight}. We have shown that when we do not give any restrictions for the graph structure, then the self-identification number and the error-correcting identification number do not give (almost) any information about the total dominating identification number. However, is it possible to give restrictions for the graph structure so that these values become closer to each other?

\subsection*{Acknowledgements}

Florent Foucaud was financed by the French government IDEX-ISITE initiative 16-IDEX-0001 (CAP 20-25) and by the ANR project GRALMECO (ANR-21-CE48-0004).
Tuo\-mo Lehtil\"a's research was supported by the Finnish Cultural Foundation and by the Academy of Finland grant 338797.

\end{document}